\documentclass[12pt]{amsart}

\usepackage[dvips]{color}
\usepackage{amsmath}
\usepackage{amsxtra}
\usepackage{amscd}
\usepackage{amsthm}
\usepackage{amsfonts}
\usepackage{amssymb}
\usepackage[mathscr]{eucal}
\usepackage{epsfig}
\usepackage{graphics}
\usepackage{accents}
\usepackage{physics}
\usepackage{hyperref}
\usepackage{mathtools}
\usepackage{tikz}
\usepackage[all]{xy}
\usepackage{here} 
\usepackage{stmaryrd}
\usepackage{tikz-cd}
%%%%%%%%%%%%%%%%%%%%%%%%%%%%%%%%%%%%%%%%%%%%%%%%%%%%
\textwidth=18cm
\textheight=22cm
\hoffset=-2.8cm
\baselineskip=18pt plus 3pt

%%%%%%%%%%%%%%%%%%%%%%%%%%%%%%%%%%%%%%%%%%%%%%%%%%%%%%
\numberwithin{equation}{section}
\newtheorem{thm}{Theorem}[section]
\newtheorem{prop}[thm]{Proposition}
\newtheorem{lem}[thm]{Lemma}
\newtheorem{rem}[thm]{Remark}
\newtheorem{cor}[thm]{Corollary}

\newtheorem{dfn}[thm]{Definition}

%%%%%%%%%%%%%%%%%%%%%%%%%%%%%%%%%%%%%%%%%%%%%%%%%%%%
%\newcommand{\bra}[1]{\langle #1 |}        %bra
%\newcommand{\ket}[1]{{| #1 \rangle}}      %ket
%\newcommand{\br}[1]{{\langle #1 \rangle}}  %bracket

%%%%%%%%%%%%%  mathbb  %%%%%%%%%%%%%%%%%%%%%%%%%%%%%%%%%%%%%%%%%%

\newcommand{\Z}{{\mathbb Z}}
%\newcommand{\Q}{{\mathbb Q}}

%\renewcommand{\S}{{\mathbb S}}

%%%%%%%%%%%%%  mathcal  %%%%%%%%%%%%%%%%%%%%%%%%%%%%%%%%%%%%%%%%%%

\newcommand{\B}{{\mathcal B}}
\newcommand{\E}{{\mathcal E}}

\newcommand{\cX}{\mathcal{X}}
\renewcommand{\S}{\mathcal{S}}

\newcommand{\Smn}{\mathcal{S}_{m|n}}

\newcommand{\cZ}{\mathcal{Z}}
%%%%%%%%%%%  overline  %%%%%%%%%%%%%%%%%%%%%%
%\newcommand{\Eb}{\overline{\E}}

%%%%%%%%%%%  widetile  %%%%%%%%%%%%%%%%%%%%%%

%%%%%%%%%  hat  %%%%%%%%%%%%%%%%%%%%%%%%%%
%\newcommand{\Xh}{\hat{X}}

%%%%%%%%%  check  %%%%%%%%%%%%%%%%%%%%%%%%%%

%%%%%%%%  mathfrak %%%%%%%%%%%%%%%%%%%%%%%%

\newcommand{\bs}{\boldsymbol}

\newcommand{\gl}{\mathfrak{gl}}

\renewcommand{\sl}{\mathfrak{sl}}

\newcommand{\slh}{\widehat{\mathfrak{sl}}}

%%%%%%%  boldsymbol %%%%%%%%%%%%%%%%%%%%%%

%%%%%%%%%%  greek  %%%%%%%%%%%%%%%%%%%%

%%%%%%%%%%%  mathbf %%%%%%%%%%%%%%%%%%%%

\newcommand{\s}{\mathbf{s}}
%%%%%%%%%%%  sancerif %%%%%%%%%%%%%%%%%%%%

%%%%%%%%%%%  rm  %%%%%%%%%%%%%%%%%%%%%%%%%%%%%%%

%\newcommand{\Res}{\mathop{\rm res}}

\newcommand{\Sym}{\mathrm{Sym}}
%\newcommand{\Tr}{{\rm Tr}}
%\newcommand{\tr}{{\rm tr}}

%%%%%%%%%%%%%%%%%%%%%%%%%%%%%%%%%%%%%%%%%%%%%%%%%%%%%%%%%%%%%
%\newcommand{\cp}{c^{\perp}}

%\renewcommand{\dp}{d^{\perp}}

\newcommand{\glhs}{ \widehat{\gl}_{\s}}
\newcommand{\Es}{\E_{\s}}
\newcommand{\slhs}{ \widehat{\sl}_{\s}}

%%%%%%%%%%%%%%%%%%%%%%%%%%%%%%%%%%%%%%%%%%%%%%%%%%%%%%%%

\newcommand{\cY}{\widehat{\mathcal{Y}}}

\newcommand{\U}{U}

\DeclareMathOperator\eva{ev}

\newcommand{\Ug}{U_q\widehat{\mathfrak{gl}}_{m|n}}
\newcommand{\Uge}{\widetilde{U}_q\widehat{\mathfrak{gl}}_{m|n}}

\newcommand{\lb}[1]{\llbracket #1 \rrbracket}
\newcommand{\hTa}{\widehat{\tau}}
\newcommand{\hT}{\widehat{T}}
\newcommand{\hX}{\widehat{\mathcal{X}}}

\begin{document}
\begin{title}[Braid actions]{Braid actions on quantum toroidal superalgebras}
\end{title}
\author{Luan Bezerra and Evgeny Mukhin}

\address{LB: Department of Mathematics,
	Indiana University -- Purdue University -- Indianapolis,
	402 N. Blackford St., LD 270,
	Indianapolis, IN 46202, USA} \email{luanpere@iupui.edu}

\address{EM: Department of Mathematics,
	Indiana University -- Purdue University -- Indianapolis,
	402 N. Blackford St., LD 270,
	Indianapolis, IN 46202, USA}\email{emukhin@iupui.edu}

\begin{abstract}
	We prove that the quantum toroidal algebras $\E_\s$ associated with different root systems $\s$ of $\gl_{m|n}$ type are isomorphic. We also show the existence of Miki automorphism of $\E_\s$, which exchanges the vertical and horizontal subalgebras.
	
	To obtain these results, we establish an action of the toroidal braid group on the
	direct sum $\oplus_\s\E_\s$ of all such algebras.
\end{abstract}
\maketitle
 {\vspace{-.5cm} \small Corresponding Author: Luan Bezerra}
 
\section{Introduction}
In this paper, we continue our study of the quantum toroidal algebras associated with the Lie superalgebra $\gl_{m|n}$ initiated in \cite{BM}. 

The root systems of $\sl_{m|n}$ are parameterized by sequences $\s=(s_1,\dots,s_{m+n})$, where $s_i=\pm1$, and $1$ occurs $m$ times, $-1$ occurs $n$ times.
We denote the algebra $\sl_{m|n}$ given in parity $\s$ by $\sl_{\s}$.
In \cite{BM}, we introduced the quantum toroidal algebra $\E_{m|n}$ corresponding to the standard parity $(1,\dots,1,-1,\dots,-1)$. In this paper, we define and study the quantum toroidal algebra $\E_\s$ associated with an arbitrary parity $\s$, see Definition \ref{defE}.

The idea for the definition is already described in \cite{BM}: we require $\Es$ to have vertical subalgebra $U_q^{ver}\slhs$, given in the current generators, and the horizontal subalgebra $U_q^{hor}\slhs$, given in the Chevalley generators, both in parity $\s$. In addition, we want our construction to be invariant under rotations $\hTa$ of the Dynkin diagram which connects $\Es$ with $\E_{\tau\s}$, where $\tau \s=(s_2,
\dots,s_{m+n},s_1)$. The algebra $\Es$ depends on parameters $q_1,q_2,q_3$, subject to $q_1q_2q_3=1$ with $q^2=q_2$.

It is natural to expect that all algebras $\Es$ should be isomorphic. In this paper, we prove that this is indeed so, see Corollary \ref{iso cor}. Similar statements are well known, see \cite{Y} for the case of quantum affine superalgebras, \cite{T} for the case of super  Yangians.

For the proof, we establish an action of the toroidal braid group $\widehat\B_{m+n}$ on $\E_\bullet=\oplus_\s\Es$. The group $\widehat\B_{m+n}$ is generated by $\hT_i, \cY_j,\hTa$, $i=1,\dots,m+n-1$, $j=0,\dots,m+n-1$. We already have $\hTa$. The automorphisms $\cY_j$ are given by explicit formulas, see \eqref{X hat}, \eqref{Y hat}. The main issue is the definition of $\hT_1$. We follow the logic of \cite{M}. We have an action of the extended affine braid group $\B_{m+n}$ on $U_q\slh_\bullet=\oplus_\s U_q\slhs$, see \cite{Y}. It turns out that the maps $T_{1,\s}: U_q^{ver}\slhs\to U_q^{ver}\slh_{\sigma_1\s}$ and $T_{1,\s}: U_q^{hor}\slhs\to U_q^{hor}\slh_{\sigma_1\s}$ agree on the common part $U_q\sl_\s=U_q^{ver}\slhs\cap U_q^{hor}\slhs$ and
give rise to the map of the whole algebra $\hT_{1,\s}:\Es\to \E_{\sigma_1\s}$, where $\sigma_1\s=(s_2,s_1,s_3,\dots,s_{m+n})$. The action of $\hT_1$ is not completely explicit, and we use various algebraic properties to check that it is well defined. In particular, using $\hTa$, we are able to reduce the checking to computations in the vertical subalgebra, for which we can use the results of \cite{Y}.

\medskip

As a byproduct, we also obtain the Miki automorphism, see Theorem \ref{mikithm}, which is central to the study of quantum toroidal algebras in the even case, see \cite{M}, \cite{FJMM}. The Miki automorphism is the  highly non-explicit automorphism which maps vertical and horizontal subalgebras to each other. Note that the isomorphism from $U_q\slhs$ in current realization to $U_q\slhs$ in Chevalley realization is  already  not explicit. The Miki automorphism originates in the well known Fourier transform $\Phi$ for toroidal braid group, see Lemma \ref{tor braid auto}, which maps commutative generators $\cY_i\in\widehat\B_{m+n}$ to Knizhnik-Zamolodchikov elements. The construction is as follows.

Let $A\subset \widehat \B_{m+n}$ be the subgroup generated by  $\cY_i$ and $\Phi(\cY_i)$. As mentioned above, the vertical and horizontal algebras share a copy of the finite type quantum algebra $U_q\sl_{m|n}$. Then $A (U_q\sl_{\s})$ generates $\Es$. Indeed, $\cY_i$ acting on $U_q\sl_{\s}$ generates the vertical subalgebra, while $\Phi(\cY_i)$ acting on $U_q\sl_{\s}$ generates the horizontal subalgebra, see Lemma \ref{phi diag}.

Then, by definition, the Miki automorphism $\psi$ maps 
$$\psi(b\,g)=\Phi(b)\,g \qquad (g\in U_q\sl_{\s}, b\in A).$$ 
In other words, when acting on $\Es$, the Fourier transform of the toroidal braid group is given as conjugation by the Miki automorphism, see Proposition \ref{miki phi}. From the very construction, $\psi (U_q^{ver}\slhs)=U_q^{hor}\slhs$. We also have
$\psi (U_q^{hor}\slhs)=U_q^{ver}\slhs$ since it is known  that $\Phi^2(\cY_i)$ is in the algebra generated by $\cY_i$.

\medskip

In this paper, we construct the action of toroidal braid group and the Miki automorphism for the case $m\neq n$ and $m+n>3$.

\medskip

The paper is organized as follows. In Section \ref{sl sec}, we recall the definition of the quantum affine superalgebra $U_q\slh_{m|n}$ with any choice of parity. In Section \ref{af braid sec}, we recall the action of the extended affine braid group on $U_q\slh_\bullet=\oplus_\s U_q\slhs$ given in \cite{Y}. In Section \ref{tor section}, we introduce the quantum toroidal algebra $\Es$ associated with $\gl_{m|n}$ for any choice of parity and give a few properties. In Section \ref{tor braid sec}, we construct an action of the toroidal braid group on $\E_\bullet=\oplus_\s\Es$ and the Miki automorphism. In Appendix \ref{app 1}, we give an evaluation map using an action of the braid group of $\sl_{m+n}$ on a completion of  $U_q\slh_\bullet$.

\bigskip

{\bf Acknowledgments.} This work was partially supported by a grant from the Simons Foundation \#353831. L.B. was partially supported by the CNPq-Brazil grant 210375/2014-0. 

\section{Quantum affine superalgebra \texorpdfstring{$U_q\slh_{m|n}$}{Uqslm|n}}{\label{sl sec}}
In this section, we review definitions of the quantum affine algebra corresponding to the  superalgebra $\sl_{m|n}$ and set up our notation.

\subsection{Parities and root systems}
We work over the field $\mathbb{C}$.

A superalgebra is a $\Z_2$-graded algebra $A=A_0\oplus A_1$.  Elements of $A_0$ are called even and elements of $A_1$ odd. We denote the parity of an element $v\in A_i$ by $|v|=i$, $i\in \Z_2$.

Fix $m, n \in \Z_{\geq 0}$, such that $m\neq n$ and $N=m+n\geq 3$. We always consider various indices modulo $N$.

We consider the Lie superalgebras $\sl_{m|n}$ and $\slh_{m|n}$. The set of Dynkin nodes are $I=\{1,2,\dots,N-1\}$ and $\hat I=\{0,1,\dots,N-1\}$, respectively.
It is well-known that there are different choices of the root system which lead to different Cartan matrices and different Dynkin diagrams.  Such choices are parameterized by $N$-tuples of $\pm 1$ with exactly $m$ positive coordinates. Set
$$\Smn=\{(s_1,\dots,s_N) |\  s_i\in\{-1,1\},\ \#\{i\,|\,s_i=1\}=m\}.$$
An element $\s=(s_1,\dots,s_N)\in \Smn$ is called a {\it parity sequence}. The parity sequence of the form $\s=(1,\dots,1,-1,\dots,-1)$ is called the {\it standard parity sequence}.

Given a parity sequence $\s \in \Smn$, we have the Cartan matrix $A^{\s}=(A^{\s}_{i,j})_{i,j\in I}$ and the affine Cartan matrix $\hat{A}^{\s}=(A^{\s}_{i,j})_{i,j\in \hat{I}}$, where
\begin{align}{\label{aff cartan}}
	  & A^{\s}_{i,j}=(s_i+s_{i+1})\delta_{i,j}-s_i\delta_{i,j+1}-s_j\delta_{i+1,j} & (i,j \in \hat{I}). 
\end{align}
Note that  $|\det A^\s|=|m-n|$ and $\det \hat{A}^\s=0$.

Denote by $\sl_{\s}$ the superalgebra corresponding to Cartan matrix $A^{\s}$.
Denote by $\slhs$ the superalgebra corresponding to Cartan matrix $\hat{A}^{\s}$. 
The superalgebras $\sl_{\s}$ are all isomorphic to $\sl_{m|n}$ and the superalgebras $\slhs$  to $\slh_{m|n}$. By abuse of notation we often omit the suffix $\s$ if the parity sequence is clear from the context.

Let $P_\s$ be the integral lattice with basis ${\varepsilon}{_i}$, $i\in \hat I$, and bilinear form given by
\begin{align*}
	  & \braket{{\varepsilon}{_i}}{{\varepsilon}{_j}}=s_i\delta_{i,j} & (i,j \in \hat{I}). 
\end{align*}

Let $\Delta_\s=\{{\alpha}{_i}:={\varepsilon}{_i}-{\varepsilon}{_{i+1}}|i\in I\}$ be the set of simple roots of $\sl_{\s}$, and let $Q_\s=\oplus_{i\in I} \Z\alpha_i$ be the root lattice of $\sl_{\s}$. Let also ${\delta}{}$ be the null root of $\slhs$ satisfying $\braket{{\delta}{}}{{\delta}{}}=\braket{ {\delta}{}}{ {\alpha}{_i}}=0$, $i\in I$. Set $ {\alpha}{_0}= {\delta}{}+ {\varepsilon}{_N}- {\varepsilon}{_1}$. Then, $\hat{\Delta}_\s=\{ {\alpha}{_i}|i\in \hat{I}\}$ is the set of simple roots of $\slhs$. Note that $\braket{ {\alpha}{_i}}{ {\alpha}{_j}}= A_{i,j}^{\s}$, $i,j\in \hat{I}$, and the parity of the simple root $\alpha_i$ is given by $|\alpha_i|=:|i|=(1-s_is_{i+1})/2$.

\subsection{Quantum affine superalgebra \texorpdfstring{$U_q\slh_{m|n}$}{Uqslm|n}}{\label{sl section}}

Fix $q\in \mathbb{C}^\times$ not a root of unity and let $[k]=\frac{q^k-q^{-k}}{q-q^{-1}}$, $k\in \Z.$
We also use the notation $[X,Y]_a=XY-(-1)^{|X||Y|}aYX.$ For simplicity, we write $[X,Y]_1=[X,Y]$. The bracket  $[X,Y]_a$ satisfies the following Jacobi identity
\begin{align}{\label{q der}}
	[[X,Y]_a,Z]_b=[X,[Y,Z]_c]_{abc^{-1}}+(-1)^{|Y||Z|}c[[X,Z]_{bc^{-1}},Y]_{ac^{-1}}. 
\end{align}

Let $\s$ be a parity sequence. 

In the {\it Drinfeld-Jimbo realization}, the quantum affine superalgebra $U_q\slhs$ is generated by {\it Chevalley generators} $e_i, f_i, t_i,\, i \in \hat{I} $. The parity of generators is given by $|e_i|=|f_i|=|i|=(1-s_is_{i+1})/2$, and $|t_i|=0$. 
	
The defining relations are as follows.
\begin{align*}
	&t_it_j=t_jt_i,\quad t_ie_jt_i^{-1}=q^{A^{\s}_{i,j}}e_j,\quad t_if_jt_i^{-1}=q^{-A^{\s}_{i,j}}f_j,\\
	&[e_i,f_j]=\delta_{i,j}\frac{t_i-t_i^{-1}}{q-q^{-1}},\\
	  & [e_i,e_j]=[f_i,f_j]=0                                                                                           & (A^{\s}_{i,j}=0),           \\
	  & \lb{e_i,\lb{e_i,e_{i\pm 1}}}=\lb{f_i,\lb{f_i,f_{i\pm 1}}}=0                                                     & (A^{\s}_{i,i}\neq 0),       \\
	  & \lb{e_i,\lb{e_{i+ 1},\lb{e_i,e_{i- 1}}}}=\lb{f_i,\lb{f_{i+ 1},\lb{f_i,f_{i- 1}}}}=0                             & (mn\neq 2, A^{\s}_{i,i}=0), \\
	  & \lb{e_{i+1},\lb{e_{i-1},\lb{e_{i+1},\lb{e_{i-1},e_i}}}}=\lb{e_{i-1},\lb{e_{i+1},\lb{e_{i-1},\lb{e_{i+1},e_i}}}} & (mn=2, A^{\s}_{i,i}\neq 0), \\
	  & \lb{f_{i+1},\lb{f_{i-1},\lb{f_{i+1},\lb{f_{i-1},f_i}}}}=\lb{f_{i-1},\lb{f_{i+1},\lb{f_{i-1},\lb{f_{i+1},f_i}}}} & (mn=2, A^{\s}_{i,i}\neq 0), 
\end{align*}
where $\lb{X,Y}=[X,Y]_{q^{-\braket{\beta}{\gamma}}}$ if $X$, $Y$ have weights $ \beta, \gamma \in Q_\s$, i.e., if $t_iXt_i^{-1}=q^{\braket{\alpha_i}{\beta}}$ and $t_iYt_i^{-1}=q^{\braket{\alpha_i}{\gamma}}$ for $i\in I$.

The element $t_0t_1\dots t_{N-1}$ is central.

The subalgebra of $U_q\slhs$ generated by  $e_i, f_i, t_i,\, i \in {I}$, is isomorphic to the quantum superalgebra $U_q\sl_{\s}$.

The superalgebra $U_q\slhs$ in the Drinfeld-Jimbo realization has a $\Z^N$-grading given by 
\begin{align}{\label{DJ grading}}
	  & \deg^h (x)=\left(\deg^h_0(x), \deg^h_1(x),\dots,\deg^h_{N-1}(x) \right), 
\end{align}
where 
\begin{align*}
	  & \deg^h_i(e_{j})=\delta_{i,j},\quad \deg^h_i(f_{j})=-\delta_{i,j},\quad \deg^h_i(t_{j})=0\quad  (i,j\in \hat{I}). 
\end{align*}
	
\medskip

In the {\it new Drinfeld realization}, the quantum affine superalgebra  $U_q\slhs$ is generated by {\it current generators} 
\footnote{Our generators $x^\pm_{i,r}, h_{i,r}, c^{\pm 1}$ 
	correspond to $x^\pm_{i,r}, K_\delta^{-r/2} h_{i,r}, K_\delta^{\pm 1}$ in \cite{Y}. In particular, $k_i^+(z), k_i^-(z)$ correspond to $\psi_i(K_\delta^{-1/2}z^{-1}),\phi_i(K_\delta^{-1/2}z)$ in \cite{Y}. }
$x^\pm_{i,r}, h_{i,r}$, $k^{\pm 1}_i, c^{\pm 1}$, $i \in I,\; r\in \Z'$. Here and below, we use the following convention: $r\in\Z'$ means $r\in \Z$ if $r$ is an index of a non-Cartan current generator $x^\pm_{i,r}$, and $r\in\Z'$ means $r\in \Z\setminus\{0\}$ if $r$ is an index of a Cartan current generator $h_{i,r}$.
	
The parity of generators is given by $|x^\pm_{i,r}|=|i|=(1-s_is_{i+1})/2$, and all remaining generators have parity $0$. 
	
The defining relations are as follows.
\begin{align*}
	&\text{$c$ is central},\quad k_ik_j=k_jk_i,\quad k_ix^\pm_j(z)k_i^{-1}=q^{\pm A^{\s}_{i,j}}x^\pm_j(z),\\
	&[h_{i,r},h_{j,s}]=\delta_{r+s,0}\,\frac{[rA^{\s}_{i,j}]}{r}\frac{c^r-c^{-r}}{q-q^{-1}},\\
	&[h_{i,r},x^{\pm}_j(z)]=\pm\frac{[rA^{\s}_{i,j}]}{r}c^{-(r\pm|r|)/2}z^rx^\pm_j(z),\\
	&[x^+_i(z),x^-_j(w)]=\frac{\delta_{i,j}}{q-q^{-1}}\Bigl(\delta\bigl(c\frac{w}{z}\bigr)k_i^+(w)-\delta\bigl(c\frac{z}{w}\bigr)k_i^-(z)\Bigr),\\
	  & (z-q^{\pm A^{\s}_{i,j}}w)x^\pm_i(z)x^\pm_j(w)+(-1)^{|i||j|}(w-q^{\pm A^{\s}_{i,j}}z)x^\pm_j(w)x^\pm_i(z)=0 & (A^{\s}_{i,j}\neq 0),              \\
	  & [x^\pm_i(z),x^\pm_j(w)]=0                                                                                  & (A^{\s}_{i,j}=0),                  \\
	  & \Sym_{z_1,z_2}\lb{x^\pm_i(z_1),\lb{x^\pm_i(z_2),x^\pm_{i\pm 1}(w)}}=0\,                                    & (A^{\s}_{i,i}\neq 0,\ i\pm1\in I), \\
	  & \Sym_{{z_1,z_2}}\lb{x^\pm_i(z_1),\lb{x^\pm_{i+ 1}(y),\lb{x^\pm_i(z_2),x^\pm_{i- 1}(w)}}}=0             & (A^{\s}_{i,i}=0,\ i\pm 1 \in I),   
\end{align*}
where $x^\pm_i(z)=\sum_{k\in \Z}x^\pm_{i,k}z^{-k}\,,$ $k_i^\pm(z)=k_i^{\pm 1}\exp \left(\pm (q-q^{-1})\sum_{r>0}h_{i,\pm r}z^{\mp r}\right)$.
	
The superalgebra $U_q\slhs$ in the new Drinfeld realization has a $\Z^N$-grading given by 
\begin{align}{\label{nD grading}}
	  & \deg^v (x)=\left(\deg^v_1(x),\dots,\deg^v_{N-1}(x);\deg_\delta(x) \right), 
\end{align}
where 
\begin{align*}
	  & \deg^v_i(x^\pm_{j,r})=\pm \delta_{i,j}, \quad \deg^v_i(k_{j})=\deg^v_i(h_{j,r})=\deg^v_i(c)=0 & (i,j\in I,\, r\in \Z'), \\
	  & \deg_\delta(x^\pm_{i,r})=\deg_\delta(h_{i,r})=r, \quad \deg_\delta(k_{i})=\deg_\delta(c)=0    & (i\in I,\, r\in \Z').   
\end{align*}

The isomorphism between Drinfeld-Jimbo and new Drinfeld realizations is described 
in Proposition~\ref{braid nD}.
	
For $J\subset I$, we call the subalgebra of $U_q\slhs$ generated by $a_{j,r}, k^{\pm 1}_{j}, c^{\pm 1}$, $j\in J$, $r\in\Z'$, where $a=x^+,x^-,h$,  the {\it diagram subalgebra  associated with $J$} and denote it by $U_q^J\slhs$.
Any diagram subalgebra is isomorphic to a tensor product of $U_q\slh_{k|l}$ algebras with central elements $c$ in each factor set equal to each other.
	
\section{Affine braid group}{\label{af braid sec}}
It is well known that the role of the Weyl group for simple Lie algebras is played by an appropriate braid group in the quantum setting, see \cite{L}. In this section we recall the action of extended affine braid group of type $A$ on $U_q\slh_\bullet=\bigoplus_{\s\in \Smn} U_q\slhs$. We follow \cite{Y}.

In this section, we always assume $N\geq 4$.

\subsection{Extended affine braid group of type \texorpdfstring{$A$}{A}}\label{ssbraid}
We recall the extended affine braid group of type $A$. 

Let $\B_N$ be the group generated by elements $\tau$, $T_i$, $i\in \hat{I}$, with defining relations
\begin{align}
	  & T_iT_{j}=T_jT_{i}            & (j\neq i,i\pm 1),{\label{eab1}} \\
	  & T_jT_iT_{j}=T_iT_jT_{i}      & (j=i\pm 1),{\label{eab2}}       \\
	  & \tau T_{i-1} \tau^{-1}=T_{i} & (i\in \hat{I}){\label{eab3}}.   
\end{align}
The group $\B_N$ is called the {\it extended affine braid group} of type $A$.

Alternatively, $\B_N$ can be described as the group generated by elements $\cX_i$, $T_i$, $i\in I$, with defining relations
\begin{align}
	  & T_iT_{j}=T_jT_{i}                                  & (j\neq i,i\pm 1),{\label{eab4}}    \\
	  & T_jT_iT_{j}=T_iT_jT_{i}                            & (j=i\pm 1),{\label{eab5}}          \\
	  & \cX_i\cX_j=\cX_j\cX_i                              & (i,j\in I),{\label{eab6}}          \\
	  & T_i\cX_j=\cX_jT_i                                  & (i\neq j),{\label{eab7}}           \\
	&T_1^{-1}\cX_1T_1^{-1}=\cX_2\cX_1^{-1},{\label{eab8}}\\
	&T_{N-1}^{-1}\cX_{N-1}T_{N-1}^{-1}=\cX_{N-2}\cX_{N-1}^{-1},{\label{eab9}}\\
	  & T_i^{-1}\cX_iT_i^{-1}=\cX_{i-1}\cX_{i+1}\cX_i^{-1} & (2\leq i \leq N-2).{\label{eab10}} 
\end{align}
Note that $\B_N$ is actually generated by $T_i$,  $i\in I$, and $\cX_1$.

An isomorphism $\gamma$ between the two realizations is given by
\begin{align}{\label{braideq}}
	  &\gamma: \cX_1\mapsto \tau T_{N-1}\cdots T_1, \quad T_i\mapsto T_i \qquad (i\in I). 
\end{align}

We have a surjective group homomorphism 
\begin{align}\label{pi}
	\pi: \B_N\to \mathfrak{S}_N,  \quad \tau\mapsto \tau, \quad 
	T_i\mapsto \sigma_i\qquad (i\in \hat I),\end{align}
	where we denoted $\sigma_i=(i,i+1)$, $i\in I$, $\sigma_0=(1,N)$, and, by an abuse of notation, $\tau=(1,2,\dots,N)$. 
	
	\subsection{Action  of \texorpdfstring{$\B_N$}{BN} on Drinfeld-Jimbo realization of  \texorpdfstring{$U_q\slh_\bullet$}{Uqslh.}}{\label{ssChevbraid}}
	The symmetric group $\mathfrak{S}_N$ acts naturally on $\S_{m|n}$ by permuting indices, $\sigma  \s:=(s_{{\sigma^{-1}}(1)},\dots,s_{{\sigma^{-1}}(N)})$ for all $\sigma\in \mathfrak{S}$, $\s\in \S_{m|n}$. 
	
	The extended affine braid group also acts on $\S_{m|n}$ by $T\s=\pi(T)\s$, for $T\in \B_N$, $\s\in \S_{m|n}$, see \eqref{pi}.
	
	The next proposition describes a family of isomorphisms of quantum affine superalgebras.
	
	\begin{prop}{\cite[Prop. 8.2.1]{Y}}{\label{ybraid}} We have the following.
	
	\begin{enumerate}
	\item For $i\in \hat{I}$, $\s\in \Smn$, there exists an isomorphism of superalgebras $T_{i,\s}:U_q\slhs \rightarrow U_q\slh_{\sigma_i \s}$ given on Chevalley generators by
	\begin{align*}
	&\begin{aligned}
	  & T_{i,\s}(t_i)=t_i^{-1},\qquad                                                          &   & T_{i,\s}(t_{i\pm 1})=t_it_{i\pm 1},                                     \\
	  & T_{i,\s}(e_i)=-s_i f_i t_i,\qquad                                                      &   & T_{i,\s}(f_i)=-s_{i+1}t_i^{-1} e_i,                                     \\
	  & T_{i,\s}(e_{i-1})=s_{i+1}q^{-s_{i+1}}\lb{e_{i-1},e_i},\qquad                           &   & T_{i,\s}(e_{i+1})=s_{i}q^{-s_{i}}(-1)^{|e_i||e_{i+1}|}\lb{e_{i+1},e_i}, \\
	  & T_{i,\s}(f_{i-1})=-(-1)^{|f_i||f_{i-1}|}\lb{f_{i-1},f_i},                              &   & T_{i,\s}(f_{i+1})=-\lb{f_{i+1},f_i},                                    
	\end{aligned}\\
	&\begin{aligned}
	&T_{i,\s}(e_j)=e_j,\ \qquad T_{i,\s}(f_j)=f_j,\qquad\  T_{i,\s}(t_j)=t_j\qquad    &(j\neq i, i\pm 1).
	\end{aligned}
	\end{align*}
	The parities on the r.h.s. correspond to the generators of the target algebra $U_q\slh_{\sigma_i \s}$. 
	
	\item The left-inverse of $T_{i,\s}\,,$  $(T_{i,\s})^{-1}:U_q\slh_{\sigma_i \s} \rightarrow U_q\slhs\,,$ is given by
	\begin{align*}
	&\begin{aligned}
	  & (T_{i,\s})^{-1}(t_i)=t_i^{-1},\qquad                                                   &   & (T_{i,\s})^{-1}(t_{i\pm 1})=t_it_{i\pm 1},                              \\
	  & (T_{i,\s})^{-1}(e_i)=-s_{i+1} t_i^{-1} f_i ,\qquad                                     &   & (T_{i,\s})^{-1}(f_i)=-s_{i} e_i t_i,                                    \\
	  & (T_{i,\s})^{-1}(e_{i-1})=s_{i}q^{-s_{i}}(-1)^{|e_i||e_{i-1}|}\lb{e_{i},e_{i-1}},\qquad &   & (T_{i,\s})^{-1}(e_{i+1})=s_{i+1}q^{-s_{i+1}}\lb{e_{i},e_{i+1}},         \\
	  & (T_{i,\s})^{-1}(f_{i+1})=-(-1)^{|f_i||f_{i+1}|}\lb{f_{i},f_{i+1}},                     &   & (T_{i,\s})^{-1}(f_{i-1})=-\lb{f_{i},f_{i-1}},                           
	\end{aligned}\\
	&\begin{aligned}
	&(T_{i,\s})^{-1}(e_j)=e_j,\qquad (T_{i,\s})^{-1}(f_j)=f_j,\qquad (T_{i,\s})^{-1}(t_j)=t_j\qquad    &(j\neq i, i\pm 1).
	\end{aligned}
	\end{align*}
	The parities on the r.h.s. correspond to the generators of the target algebra $U_q\slh_{\s}$.
	
	\item For $\s\in \Smn$, there exist an isomorphism of superalgebras $\tau_{\s}:U_q\slhs\rightarrow U_q\slh_{\tau \s}$ given on Chevalley generators by 
	\begin{align*}
	&\tau_{\s}(x_i)=x_{i+ 1} &(x=e,f,t).
	\end{align*}
	\end{enumerate}
	\qed
	\end{prop}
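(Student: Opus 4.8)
The plan is to verify directly that the formulas given for $T_{i,\s}$, $(T_{i,\s})^{-1}$, and $\tau_\s$ define homomorphisms of superalgebras, and then to check the inverse relations. Since these are maps out of $U_q\slhs$ given in the Drinfeld--Jimbo realization by a presentation, the core of the argument is to confirm that the proposed images of the Chevalley generators satisfy all the defining relations listed in Section \ref{sl section}, with the parities read off from the target algebra $U_q\slh_{\sigma_i\s}$ (respectively $U_q\slh_{\tau\s}$). Once this is established, the universal property of the presentation guarantees a well-defined algebra homomorphism, and bijectivity follows from exhibiting a two-sided inverse.

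First I would treat $\tau_\s$, which is essentially a relabeling: since $\tau$ cyclically shifts the Dynkin nodes and $\hat A^{\tau\s}_{i+1,j+1}=\hat A^{\s}_{i,j}$, the relations are preserved verbatim, and $\tau_\s$ is manifestly invertible with inverse $\tau_{\tau^{-1}}^{-1}$ of the same shape. Next I would handle $T_{i,\s}$. The generators $e_j,f_j,t_j$ with $j\neq i,i\pm1$ are fixed, so the only relations requiring genuine work are those involving the nodes $i-1,i,i+1$. The crucial input is that the entries of the Cartan matrix change under $\sigma_i$ according to \eqref{aff cartan}: for instance $A^{\sigma_i\s}_{i,i}=-(s_i+s_{i+1})$ while the off-diagonal entries at $i\pm1$ get redistributed, which is exactly what the twisted brackets $\lb{\cdot,\cdot}$ in the images are designed to accommodate. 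The repeated tool here is the $q$-deformed Jacobi identity \eqref{q der}, which lets one expand nested brackets such as $\lb{e_{i-1},e_i}$ and reduce the Serre-type relations of the target to identities that already hold in the source. These computations, while numerous, are mechanical applications of \eqref{q der} together with the $k$-weight bookkeeping, so I would organize them by relation type (commutation of $t$'s with $e,f$; the $[e,f]$ relations; the Serre and higher Serre relations) rather than generator by generator.

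Finally, to prove part (2), namely that $(T_{i,\s})^{-1}$ as written is the left inverse, I would compose $(T_{i,\s})^{-1}\circ T_{i,\s}$ on the generators and check it equals the identity; the nontrivial cases are again $e_i,f_i,e_{i\pm1},f_{i\pm1}$, where one substitutes the bracket expressions and simplifies using the sign factors $s_i,s_{i+1}$ and the relation $[e_i,f_i]=(t_i-t_i^{-1})/(q-q^{-1})$. Because these maps are homomorphisms between algebras defined by the same type of presentation, verifying the identity on generators suffices.

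The main obstacle I anticipate is the higher-order Serre relations in the degenerate cases $A^{\s}_{i,i}=0$ (the odd simple root case) and especially $mn=2$, where the quartic relations $\lb{e_{i+1},\lb{e_{i-1},\lb{e_{i+1},\lb{e_{i-1},e_i}}}}=\lb{e_{i-1},\lb{e_{i+1},\lb{e_{i-1},\lb{e_{i+1},e_i}}}}$ must be shown to be consistent with the images. These require careful, lengthy iterated applications of \eqref{q der}, and the sign conventions coming from the super-structure make them error-prone; this is precisely the part where invoking the results of \cite{Y} directly, rather than recomputing, is most valuable.
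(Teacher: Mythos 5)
The paper does not actually prove this proposition: it is quoted from Yamane \cite{Y} and stated with no argument, so there is no internal proof to compare yours against. Your outline --- verify the defining relations of the target presentation on the proposed images, invoke the universal property to get a homomorphism, and check invertibility on generators --- is the standard direct verification and is essentially what the cited reference carries out, so the strategy is sound. Two cautions, though. First, your proposal is a plan rather than a proof: the entire computational content (the $[e,f]$ relations for images like $-s_if_it_i$, and above all the Serre and quartic Serre relations in the cases $A^\s_{i,i}=0$ and $mn=2$) is precisely what you defer back to \cite{Y}, which leaves you in the same position as the paper itself. Second, your stated transformation rule $A^{\sigma_i\s}_{i,i}=-(s_i+s_{i+1})$ is incorrect: since $\sigma_i$ merely swaps $s_i$ and $s_{i+1}$, the diagonal entry $A_{i,i}$ is \emph{unchanged}; what changes are the off-diagonal entries $A^\s_{i,i\pm1}$, $A^\s_{i\pm1,i}$ and the diagonal entries $A^\s_{i\pm1,i\pm1}$ --- equivalently, the nodes $i\pm1$ flip parity when $s_i\neq s_{i+1}$ while node $i$ keeps its parity. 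That is exactly what the twisted brackets $\lb{\cdot,\cdot}$ and the parity disclaimers in the statement are accommodating, and getting it backwards would derail the weight and sign bookkeeping your verification relies on.
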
 
	We note the following useful formula
	\begin{align}\label{TTX=X}
	&(T_i\, T_{i\pm 1})_\s (x_i)=x_{i\pm 1} &(x=e,f,t).
\end{align}

The isomorphisms $T_{i,\s}$ and $\tau_{\s}$ change the grading in the Drinfeld-Jimbo realization as follows.

If $\deg^h(x)=(d_0,d_1,\dots,d_{i-1}, d_i, d_{i+1},\dots,d_{N-1})$, then
\begin{equation}{\label{T deg}}
	\begin{aligned}
		  & \deg^h(T_{i,\s}(x))=(d_0,d_1,\dots,d_{i-1}, d_{i-1}+d_{i+1}-d_i, d_{i+1},\dots,d_{N-1}) \qquad (i\in\hat I), \\
		  & \deg^h(\tau_{\s}(x))=(d_{N-1},d_0,d_1,\dots,d_{N-2}).                                                        
	\end{aligned}
\end{equation}

The isomorphisms generate a groupoid if one considers
the category whose objects are the superalgebras $U_q\slhs$, $\s\in \Smn$, and whose morphisms are $\tau_\s$, $T_{i,\s}$, $i\in \hat{I}, \s \in \Smn$, their inverses, and compositions.

In our situation, the groupoid structure is equivalent to the group action as follows.

Define the following automorphisms of $U_q\slh_\bullet=\bigoplus_{\s\in \Smn} U_q\slhs$
\begin{align}{\label{autosum}}
	  & \tau=\bigoplus_{\s\in \Smn} \tau_{\s}\,, & T_i=\bigoplus_{\s\in \Smn} T_{i,\s} &   & (i\in \hat{I}). 
\end{align}
Note that, by abuse of notation, we denote by $\tau$ both the automorphism above and the element of $\mathfrak{S}_N$.
\begin{prop}{\cite[Prop. 8.2.2]{Y} \label{braidaff}}
	The automorphisms  $\tau$, $T_i$, $i\in \hat{I}$, define an action of the extended affine braid group $\B_N$ on $U_q\slh_\bullet$, i.e., they satisfy the relations \eqref{eab1}-\eqref{eab3}. \qed
\end{prop}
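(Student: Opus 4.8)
The plan is to reduce each relation to a computation on the Chevalley generators. Every $T_{i,\s}$ and $\tau_\s$ is an algebra isomorphism and $U_q\slhs$ is generated by the $e_k,f_k,t_k$ ($k\in\hat I$), and the maps in \eqref{autosum} act summandwise, so it is enough to show that the two composites in each of \eqref{eab1}--\eqref{eab3} agree on every generator of every summand. First I would dispatch the parity-sequence bookkeeping with the homomorphism $\pi$ of \eqref{pi}: applying $\pi$ to each relation gives a true identity in $\mathfrak{S}_N$ (respectively $\sigma_i\sigma_j=\sigma_j\sigma_i$, the braid relation $\sigma_j\sigma_i\sigma_j=\sigma_i\sigma_j\sigma_i$, and $\tau\sigma_{i-1}\tau^{-1}=\sigma_i$), which shows that for each $\s$ both composites start at $U_q\slhs$ and land in the same summand $U_q\slh_{w\s}$. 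This is exactly what promotes the groupoid of isomorphisms of Proposition \ref{ybraid} to a genuine action of $\B_N$ on $U_q\slh_\bullet$.

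I would treat \eqref{eab3} first, as it is the cheapest and also organizes the rest. Since $\tau_\s(x_k)=x_{k+1}$, the conjugation $\tau\,T_{i-1}\,\tau^{-1}$ merely relabels every index in the defining formula of $T_{i-1}$ upward by one, and comparing with the formula for $T_i$ generator by generator gives the equality (the scalars $s_k$ and parities $|k|$ transport correctly because $\tau_\s$ shifts $\s$ as well). Having \eqref{eab3}, i.e.\ $\tau^{k}T_j\tau^{-k}=T_{j+k}$, I can conjugate and reduce the whole family \eqref{eab2} to a single base case, say $T_1T_2T_1=T_2T_1T_2$, since conjugating this by $\tau^k$ yields the adjacent braid relation at any node. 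Relation \eqref{eab1} I would prove uniformly: for $j\neq i,i\pm 1$ the formulas of Proposition \ref{ybraid} show that $T_i$ changes only the generators indexed by $i-1,i,i+1$ and $T_j$ only those indexed by $j-1,j,j+1$; these index sets are disjoint, so on each generator at most one operator is nontrivial and commutativity is immediate.

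The substantive content is the single braid relation \eqref{eab2}. I would apply both $T_2T_1T_2$ and $T_1T_2T_1$ to the generators $e_0,e_1,e_2,e_3$, to the corresponding $f$'s, and to the $t$'s; all other generators are fixed by each factor. The Cartan part is painless: on the $t_k$ the operators realize the $A$-type reflection action on the weight lattice, so the relation reduces to the Coxeter braid identity there. For the raising and lowering generators one expands the iterated $q$-brackets $\lb{\cdot,\cdot}$ produced by Proposition \ref{ybraid}, normalizes them with the super $q$-Jacobi identity \eqref{q der}, and matches the two sides term by term; the auxiliary formula \eqref{TTX=X}, which fixes the recurring images $T_1T_2(x_1)=x_2$ inside the composites, is useful for shortening these manipulations.

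The main obstacle is exactly this last computation for the outer generators $e_0$ and $e_3$ (and their $f$-analogues): each of the three successive applications adds a bracket layer, so one must reorganize length-three $q$-commutators and verify that the two orders of application produce the same element. The delicate point is the sign and coefficient bookkeeping, because at each stage the output lives in a different summand $U_q\slh_{w\s}$, so the parities $|k|=(1-s_ks_{k+1})/2$ and the scalars $s_k,q^{\pm s_k}$ appearing in the formulas are those of the current target parity sequence rather than the original $\s$. Tracking which parity sequence governs each coefficient through the repeated use of \eqref{q der} is the error-prone core of the proof; once this is carried out consistently and the two composites are seen to agree on all generators, the proposition follows.
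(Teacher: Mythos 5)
The paper gives no proof of this proposition at all—it is quoted from \cite{Y} and stated with an end-of-proof mark—so your outline can only be judged on its own terms. The overall strategy is the standard one and is sound: reduce everything to the Chevalley generators, use $\pi$ to confirm both composites land in the same summand, prove \eqref{eab3} by observing that conjugation by $\tau$ relabels the defining formulas, use \eqref{eab3} to reduce the braid relations \eqref{eab2} to the single base case $T_1T_2T_1=T_2T_1T_2$, and identify the computation on the nodes $0,1,2,3$ as the substantive core.

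There is, however, one concrete error, in your treatment of \eqref{eab1}. You claim that for $j\neq i,i\pm1$ the index sets $\{i-1,i,i+1\}$ and $\{j-1,j,j+1\}$ are disjoint, so that on each generator at most one of $T_i$, $T_j$ acts nontrivially and commutativity is immediate. This fails when $j=i\pm2$, which is an admissible case of \eqref{eab1} (and for $N=4$ it is the \emph{only} case): the two sets share the node $i+1$ (both nodes $i\pm1$ when $N=4$), and both operators act nontrivially there. Concretely, $T_i(e_{i+1})\propto\lb{e_{i+1},e_i}$ while $T_{i+2}(e_{i+1})\propto\lb{e_{i+1},e_{i+2}}$, so the two composites yield, up to scalars that must also be matched, $\lb{\lb{e_{i+1},e_i},e_{i+2}}$ and $\lb{\lb{e_{i+1},e_{i+2}},e_i}$. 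Their equality is true but not immediate: it requires the Jacobi identity \eqref{q der} together with $[e_i,e_{i+2}]=0$ (valid because $A^\s_{i,i+2}=0$), and a check that the sign $(-1)^{|e_i||e_{i+2}|}$ produced by \eqref{q der} is absorbed by the scalars $s_\bullet q^{-s_\bullet}(-1)^{|\cdot||\cdot|}$ coming from Proposition \ref{ybraid}, where one must track which parity sequence governs each coefficient along the composite. The analogous verification is needed for the $f$'s. This is a short computation, but as written the step ``these index sets are disjoint, so \dots\ commutativity is immediate'' is false for $|i-j|=2$ and must be replaced by it.
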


We adopt the following convention. For $T\in \B_N$, we denote $T_\s$ the restriction of $T$ to the $U_q\slhs$ summand in $U_q\slh_\bullet$. Note that the image of $T_\s$ is  also a particular summand in $U_q\slh_\bullet$, namely $U_q\slh_{T\s}$. For example,  $(\tau T_{i}T_{j}T_{k})_\s= \tau_{\sigma_i \sigma_j\sigma_k\s} T_{i,\sigma_j\sigma_k\s}T_{j,\sigma_k\s}T_{k,\s}$ is mapping $U_q \slhs$ to $U_q\slh_{\tau\sigma_i \sigma_j\sigma_k\s}$. We use a similar convention with other maps, see, for example, Theorem \ref{Tbraid} below.

Note that the action of $\B_N$ on $\Smn$ is transitive. In particular, Proposition \ref{ybraid} implies that all superalgebras $U_q\slhs$, $\s \in \Smn$, are isomorphic.

\subsection{Action of \texorpdfstring{$\B_N$}{BN} on new Drinfeld realization of \texorpdfstring{$U_q\slh_\bullet$}{Uqsl.}}{\label{braid nD}}

We have an action of extended affine braid group $\B_N$ on $U_q\slh_\bullet$ given in Chevalley generators, see Section \ref{ssChevbraid}. The group $\B_N$ contains elements $\cX_i$, $i\in I$, see Section \ref{ssbraid}. The elements $\cX_i$ preserve the parity, $\cX_i\s=\s$, for all $\s\in \Smn$, and, therefore, $(\cX_{i})_\s$ is an automorphism of $U_q\slhs$. These automorphisms are used to obtain an isomorphism between the two different realizations of $U_q\slhs$ (similar to the even case, see \cite{B}). 

\begin{prop}{\cite[Theorem 8.5.1]{Y}}\label{X} There exists an isomorphism $\iota_\s$ from the new Drinfeld to the Drinfeld-Jimbo realization of $U_q\slhs$ mapping:
	\begin{align}{\label{nDDJ}}
		  & x^+_{i,r}\mapsto (-1)^{ir}\cX_{i,\s}^{-r} (e_i), &   & x^-_{i,r}\mapsto (-1)^{ir}\cX_{i,\s}^r (f_i), &   & k_i\mapsto t_i, &   & c\mapsto t_0t_1\cdots t_{N-1} &   & (r\in \Z,i\in I). 
	\end{align}
	\qed
\end{prop}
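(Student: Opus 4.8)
The plan is to follow the Beck-type construction of affine root vectors, in the super form due to Yamane \cite{Y}. First I would fix the candidate images. For the Cartan currents there is no explicit formula to impose; instead I regard the right-hand sides $(-1)^{ir}\cX_{i,\s}^{-r}(e_i)$ and $(-1)^{ir}\cX_{i,\s}^{r}(f_i)$ as honest elements of $U_q\slhs$ in the Drinfeld--Jimbo realization (they are well defined since $(\cX_i)_\s$ is an automorphism, as recorded at the start of Section \ref{braid nD}), assemble them into generating series $\sum_{r}\iota_\s(x^\pm_{i,r})z^{-r}$, and \emph{define} $\iota_\s(k_i^\pm(z))$, hence $\iota_\s(h_{i,r})$, through the singular part of the bracket $[\iota_\s(x^+_i(z)),\iota_\s(x^-_i(w))]$. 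The guiding principle that organizes everything is that each $\cX_i$ lies in the translation (lattice) part of $\B_N$ --- indeed $\cX_i\in\ker\pi$, which is exactly the content of $\cX_i\s=\s$ --- so that $\cX_{i,\s}^{-r}(e_i)$ is the real affine root vector attached to $\alpha_i+r\delta$; by the degree rule \eqref{T deg} it sits in the \eqref{DJ grading}-component matching the \eqref{nD grading}-component of $x^+_{i,r}$.

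The main body of the proof is to check that $\iota_\s$ respects every defining relation of the new Drinfeld realization. The torus relations ($c$ central, the $k_ik_j$ and $k_ix^\pm_j k_i^{-1}$ relations) are immediate from Proposition \ref{ybraid}, the commutativity $\cX_i\cX_j=\cX_j\cX_i$ of \eqref{eab6}, and the fact that the $\cX_i$ preserve the Drinfeld--Jimbo grading. The $i\neq j$ case of the $[x^+_i(z),x^-_j(w)]$ relation amounts to showing that the bracket of real root vectors attached to distinct Dynkin nodes is nonsingular, while the $i=j$ case is exactly the definition of $\iota_\s(k_i^\pm)$; from this one extracts the $[h_{i,r},h_{j,s}]$ and $[h_{i,r},x^\pm_j(z)]$ relations, whose scalar factors $[rA^\s_{i,j}]/r$ emerge from the braid action. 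The remaining quadratic relations and the (super) Serre relations are reformulated as $q$-commutator identities among the braided vectors $\cX^{\,\pm r}(e_i)$, $\cX^{\,\pm r}(f_i)$ and verified using the braid relations \eqref{eab7}--\eqref{eab10} and the explicit formulas of Proposition \ref{ybraid}; note that $N\ge 4$ forces $mn\neq 2$, so the degree-four exotic Serre relations do not occur here.

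Finally I would prove bijectivity. Surjectivity is clear once $e_0,f_0,t_0$ are recovered: $x^\pm_{i,0}\mapsto e_i,f_i$ and $k_i\mapsto t_i$ give all finite Chevalley generators, $t_0=c\,(t_1\cdots t_{N-1})^{-1}$, and the affine root vectors $e_0,f_0$ appear among the $\iota_\s(x^\pm_{i,r})$. For injectivity I would compare the two $\Z^N$-gradings: $\iota_\s$ intertwines \eqref{nD grading} with \eqref{DJ grading} up to a fixed linear automorphism of $\Z^N$, and both realizations admit compatible triangular decompositions and PBW bases, so a graded comparison forces $\iota_\s$ to be bijective.

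The hardest step is the second paragraph, specifically the imaginary-root part of the $[x^+_i(z),x^-_i(w)]$ relation: one must show that the brackets of the real root vectors $\cX^{-r}(e_i)$ and $\cX^{s}(f_i)$ reassemble into the series $k_i^\pm(z)$ with precisely the prescribed $\delta$-function form, and that the resulting $h_{i,r}$ satisfy the Heisenberg relations with the exact coefficients $\delta_{r+s,0}[rA^\s_{i,j}]r^{-1}(c^r-c^{-r})/(q-q^{-1})$. Pinning down these scalars together with the super signs $(-1)^{ir}$ is the delicate computation carried out in \cite{Y}, and it is the one place where the explicit braid-group formulas cannot be bypassed.
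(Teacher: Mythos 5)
The paper gives no proof of this proposition --- it is quoted from Yamane \cite{Y} with a \qed --- and your outline is exactly the Beck-type argument that the cited reference carries out in the super setting: transport $e_i,f_i$ by the lattice elements $\cX_i$ (which indeed lie in $\ker\pi$, so act as automorphisms of each summand) to produce the real root vectors, extract the imaginary root vectors from the singular part of $[x_i^+(z),x_i^-(w)]$, verify the Drinfeld relations via \eqref{eab6}--\eqref{eab10} and Proposition \ref{ybraid}, and get bijectivity from surjectivity plus a graded/PBW comparison, the inverse on $e_0,f_0$ being what the paper records in Lemma \ref{iota inverse}. Your sketch is structurally correct and consistent with the paper (including the observation that $N\ge 4$ excludes $mn=2$); the one place to tighten the wording is the $i\neq j$ case of the $E$--$F$ relation, where the bracket of root vectors attached to distinct nodes must vanish identically, not merely be free of $\delta$-function terms, but as you say the genuinely hard scalar bookkeeping is precisely the computation performed in \cite{Y}.
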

The identifications $\iota_\s$ allow us to study action of $\B_N$ on the new Drinfeld realization. One can describe action of the $\cX_{i,\s}$ in current generators explicitly.

\begin{prop}\label{X action} For $i\in I$, $\s\in \Smn$, the action of  $\cX_{i,\s}$ in current generators is given by
	\begin{align*}
		  & \cX_{i,\s}(x^\pm_{j,r})=(-1)^{i\delta_{ij}}x^\pm_{j,r\mp \delta_{ij}}, &   & \cX_{i,\s}(k_j)=c^{-\delta_{ij}}k_j, &   &                     \\
		  & \cX_{i,\s}(h_{j,r})=h_{j,r},                                           &   & \cX_{i,\s}(c)=c                      &   & (r\in \Z', j\in I). 
	\end{align*}
	\begin{proof}
	The above equalities with $i=j$ follow from \eqref{nDDJ}. 
	If $i\neq j$, by \cite[Prop. 8.2.3]{Y}, we have $\cX_{i,\s}(a_j)=a_j$, $(a=e, f, t)$, and the proposition follows from the commutativity \eqref{eab6} of the operators $\cX_{i,\s}$.  
	\end{proof}
\end{prop}

For the action of $T_{i,\s}$ in current generators we have some partial information.

\begin{lem}{\label{T cur}} 
	For $i\in I$,  we have
	\begin{align}
		T_{i,\s}(a_{j,r})=a_{j,r} \qquad (r\in \Z',\ j\in I,\ i\neq j,j\pm1,\  a=x^+, x^-,h).     \label{Tfix1} 
	\end{align}
	Moreover,
	\begin{align}
		  & T_{i,\s}(x^+_{i+1,r})=s_{i}q^{-s_{i}}(-1)^{|i||i+1|}\lb{x^+_{i+1,r},x^+_{i,0}} \qquad & (r\in \Z),\label{Tfix2} \\
		  & T_{i,\s}(x^+_{i-1,r})=s_{i+1}q^{-s_{i+1}}\lb{x^+_{i-1,r},x^+_{i,0}} \qquad            & (r\in \Z),\label{Tfix3} \\  
		  & T_{i,\s}(x^-_{i+1,r})=-\lb{x^-_{i+1,r},x^-_{i,0}} \qquad                              & (r\in \Z),\label{Tfix4} \\
		  & T_{i,\s}(x^-_{i-1,r})=-(-1)^{|i||i-1|}\lb{x^-_{i-1,r},x^-_{i,0}} \qquad               & (r\in \Z).\label{Tfix5} 
	\end{align}
	The parities on the r.h.s. correspond to the generators of target algebra $U_q\slh_{\sigma_i\s}$.
	
	We also have, $T_{i,\s} U^{\{i\}}_q\slhs\subset U^{\{i\}}_q\slh_{\sigma_i\s}$ if $i\neq 1,N-1$. 
	
	Finally, $T_{1,\s} U^{\{1\}}_q\slhs\subset U^{\{1,2\}}_q\slh_{\sigma_1\s}$ and $T_{N-1,\s} U^{\{N-1\}}_q\slhs\subset U^{\{N-1,N-2\}}_q\slh_{\sigma_{N-1}\s}$.
\end{lem}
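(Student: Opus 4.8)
The plan is to evaluate $T_{i,\s}$ on the algebra generators $x^\pm_{j,r}, h_{j,r}$ of the diagram subalgebras by passing through the identification $\iota_\s$ of Proposition \ref{X}. Writing $x^+_{j,r}=(-1)^{jr}\cX_{j,\s}^{-r}(e_j)$ and $x^-_{j,r}=(-1)^{jr}\cX_{j,\s}^{r}(f_j)$ as in \eqref{nDDJ}, each value $T_{i,\s}(x^\pm_{j,r})$ is controlled by the braid-group element $T_i\cX_j^{\mp r}$, which I rewrite inside $\B_N$ and then evaluate using the explicit Chevalley action of Proposition \ref{ybraid} together with the mode-shift action of the commutative $\cX_{k,\s}$ in Proposition \ref{X action}. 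The Cartan currents need no separate argument: since $h_{j,r}$ and $k_j$ are recovered from the series $k^\pm_j(z)$ appearing in $[x^+_j(z),x^-_j(w)]$, control of the images of $x^\pm_{j,r}$ automatically controls the images of $h_{j,r}$.

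For \eqref{Tfix1} and \eqref{Tfix2}--\eqref{Tfix5}, which all concern $j\neq i$, the key input is the commutation relation \eqref{eab7}: since $T_i\cX_j=\cX_jT_i$, I get $T_{i,\s}(x^+_{j,r})=(-1)^{jr}\cX_{j,\sigma_i\s}^{-r}(T_{i,\s}(e_j))$, and likewise for $x^-$. When $j\neq i,i\pm1$ one has $T_{i,\s}(e_j)=e_j$, $T_{i,\s}(f_j)=f_j$, so $\cX_{j,\sigma_i\s}^{-r}(e_j)$ reconstitutes $(-1)^{jr}x^+_{j,r}$ and the prefactor $(-1)^{jr}$ cancels, giving \eqref{Tfix1}. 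When $j=i\pm1$ I substitute the bracket expressions for $T_{i,\s}(e_{i\pm1})$ and $T_{i,\s}(f_{i\pm1})$ from Proposition \ref{ybraid} and reapply $\cX_{i\pm1}^{\mp r}$; this shifts only the index-$(i\pm1)$ factor of the bracket and leaves $e_i=x^+_{i,0}$ fixed, which after tracking the signs and the power of $q$ yields exactly \eqref{Tfix2}--\eqref{Tfix5}.

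The substantive case is $j=i$, underlying both containment statements. Here $T_i$ and $\cX_i$ do not commute, and the only available relations are the quadratic ones \eqref{eab8}--\eqref{eab10}, each of the form $T_i^{-1}\cX_iT_i^{-1}=P\,\cX_i^{-1}$ with $P=\cX_{i-1}\cX_{i+1}$ in the interior and $P=\cX_2$, $P=\cX_{N-2}$ at the two ends. Feeding $T_{i,\s}(e_i)=-s_i f_i t_i=-s_i\,x^-_{i,0}k_i$ into this relation, every factor coming from $P$ is some $\cX_{i\pm1}$, which by Proposition \ref{X action} fixes the index-$i$ data $x^\pm_{i,\bullet}$, $h_{i,\bullet}$, $k_i$ and $c$. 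For $2\le i\le N-2$ both neighbours occur and nothing outside index $i$ can survive, giving $T_{i,\s}U^{\{i\}}_q\slhs\subset U^{\{i\}}_q\slh_{\sigma_i\s}$. At the ends only one neighbour is present, so the rewriting of $T_1\cX_1^{\mp r}$ (resp. $T_{N-1}\cX_{N-1}^{\mp r}$) cannot be balanced, and I expect the recursion that produces the higher modes to be forced into the larger block, yielding the weaker but sufficient $T_{1,\s}U^{\{1\}}_q\slhs\subset U^{\{1,2\}}_q\slh_{\sigma_1\s}$ and $T_{N-1,\s}U^{\{N-1\}}_q\slhs\subset U^{\{N-1,N-2\}}_q\slh_{\sigma_{N-1}\s}$.

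I expect the main obstacle to be precisely this $j=i$ computation: the element $T_i\cX_i^{\mp r}$ does not reduce to an expression linear in $T_i$, since applying the quadratic relation regresses into higher powers $T_i^{\mp2}, T_i^{\mp3},\dots$. One therefore cannot read off $T_{i,\s}(x^\pm_{i,r})$ for all $r$ from a single rewriting; instead one settles the base modes $r=0,\pm1$ directly and then propagates to all $r$ using the internal relations of $U^{\{i\}}_q\slhs$, checking at each step that the target subalgebra is closed under the operations used. This propagation is transparent when node $i$ is even ($A^\s_{i,i}\neq0$), where $[h_{i,\pm1},x^\pm_{i,r}]$ shifts the mode inside $U^{\{i\}}_q\slhs$, but it needs separate care when node $i$ is odd ($A^\s_{i,i}=0$), where that bracket vanishes and the higher modes must be generated through the $k^\pm_i(z)$-relations. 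Verifying this closure uniformly in the parity of the node is the part of the argument demanding the most attention.
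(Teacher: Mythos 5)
Your treatment of \eqref{Tfix1}--\eqref{Tfix5} is exactly the paper's argument: write $x^{\pm}_{j,r}=(-1)^{jr}\cX_{j,\s}^{\mp r}(e_j\ \text{or}\ f_j)$ as in \eqref{nDDJ}, commute $T_i$ past $\cX_j$ using \eqref{eab7}, and substitute the Chevalley formulas of Proposition \ref{ybraid}; that part is correct and needs no further comment. The problem is the containment statements, where your proposed argument has a genuine gap.

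For $j=i$ you plan to settle the modes $r=0,\pm1$ and then ``propagate to all $r$ using the internal relations of $U^{\{i\}}_q\slhs$.'' When node $i$ is odd ($A^{\s}_{i,i}=0$) this propagation is not merely delicate, it is impossible: every bracket $[h_{i,r},x^{\pm}_{i,s}]$ and $[h_{i,r},h_{i,s}]$ carries the factor $[rA^{\s}_{i,i}]=0$, so the Cartan modes of node $i$ are central in $U^{\{i\}}_q\slhs\cong U_q\slh_{1|1}$ and no operation internal to that subalgebra changes the mode index of $x^{\pm}_i$; the elements $x^{\pm}_{i,r}$, $r\neq 0$, are independent generators that cannot be recovered from $x^{\pm}_{i,0}$, $h_{i,\pm1}$ and the $k^{\pm}_i(z)$-relations. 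You flag this case but leave it unresolved, and (as you correctly observe) the direct rewriting of $T_i\cX_i^{\mp r}$ via \eqref{eab8}--\eqref{eab10} does not terminate, so there is no fallback. The paper's fix is to step outside node $i$: since $A^{\s}_{i-1,i}=-s_i\neq 0$ for every parity sequence, the bracket $[h_{i-1,\pm1},x^{\pm}_i(z)]$ always shifts modes, hence for $i\neq 1$ the whole of $U^{\{i\}}_q\slhs$ lies in the subalgebra generated by the finitely many elements $x^{\pm}_{i,0}$ and $h_{i-1,\pm1}$. The $T_{i,\s}$-images of these are controlled: $T_{i,\s}(x^{\pm}_{i,0})$ by Proposition \ref{ybraid}, and $T_{i,\s}(h_{i-1,\pm1})$ by \eqref{Tfix3} and \eqref{Tfix5}, which place it in $U^{\{i-1,i\}}_q\slh_{\sigma_i\s}$. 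This gives $T_{i,\s}U^{\{i\}}_q\slhs\subset U^{\{i,i-1\}}_q\slh_{\sigma_i\s}$; the symmetric argument with $h_{i+1,\pm1}$ for $i\neq N-1$ gives containment in $U^{\{i,i+1\}}_q\slh_{\sigma_i\s}$, and intersecting the two yields $U^{\{i\}}_q\slh_{\sigma_i\s}$ for interior $i$. This also accounts for the boundary cases $i=1,N-1$ structurally (only one neighbour is available), rather than via your heuristic about unbalanced rewritings.
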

\begin{proof}
	Equations \eqref{Tfix1}-\eqref{Tfix5} follow from relation \eqref{eab7} and Proposition \ref{ybraid}.
	
	To prove the last part, we note that if $i\neq 1$ then $x^\pm_{i,0}$ and $h_{i-1,\pm1}$ generate a subalgebra containing $(U^{\{i\}}_q\slhs)^\pm$. Here and below we denote by suffix  $+$ (resp. $-$) the subalgebras generated by non-negative (resp. non-positive) modes of the generating currents in Drinfeld realization. Note that such subalgebras are $\Z_{\geq 0}$-graded with finite-dimensional graded components. Therefore, $T_{i,\s} (U^{\{i\}}_q\slhs)^\pm\subset (U^{\{i,i-1\}}_q\slh_{\sigma_i\s})^\pm$. Similarly, if $i\neq N-1$, we have $T_{i,\s} (U^{\{i\}}_q\slhs)^\pm\subset (U^{\{i,i+1\}}_q\slh_{\sigma_i\s})^\pm$. Now, from the PBW theorem \cite[Theorem 5.7]{T2}, we have the intersection  $(U^{\{i,i-1\}}_q)^\pm\cap (U^{\{i,i+1\}}_q)^\pm= (U^{\{i\}}_q)^\pm$. The lemma follows.
\end{proof}

We can also write the inverse of the isomorphism $\iota_\s$.

\begin{lem}\label{iota inverse} The isomorphism $\iota^{-1}_\s$ maps
	\begin{align}
		  & e_i\mapsto x^+_{i,0},\quad f_i\mapsto x^-_{i,0},\quad t_i\mapsto k_i & (i \in I),\label{DJnDi} \\
		&t_0\mapsto c(k_1k_2\cdots k_{m+n-1})^{-1},\label{DJnDt0}\\
		&e_0\mapsto \left( \cX_1T_{N-1}\cdots T_2 T_1^{-1}\right)_{\tau\s}(x^+_{1,0}), \label{DJnDe} \\
		&f_0\mapsto \left( \cX_1T_{N-1}\cdots T_2 T_1^{-1}\right)_{\tau\s}(x^-_{1,0}).\label{DJnDf}
	\end{align}
\end{lem}
\begin{proof} Equations \eqref{DJnDi} and \eqref{DJnDt0} are clear.

We check \eqref{DJnDe}, the check for equation \eqref{DJnDf} is analogous. By equation \eqref{braideq} we have
$$
e_0=(\tau^{-1})_{\tau\s}\ (e_1)=\left( T_{N-1}\cdots T_2 T_1\cX_1^{-1}\right)_{\tau\s}(x^+_{1,0}),
$$
using equations \eqref{eab7} and \eqref{eab8}, and noting that  $\cX_2$ commutes with $T_1$ and acts trivially on $x^+_{1,0}$, we have
$$
\left( T_{N-1}\cdots T_2 T_1\cX_1^{-1}\right)_{\tau\s}(x^+_{1,0})=\left( T_{N-1}\cdots T_2 \cX_1\cX_2^{-1}T_1^{-1}\right)_{\tau\s}(x^+_{1,0})=\left( \cX_1T_{N-1}\cdots T_2 T_1^{-1}\right)_{\tau\s}(x^+_{1,0}).
$$
\end{proof}
Note that in Lemma \ref{iota inverse} we apply $T_i$ only to Chevalley generators, therefore the formulas are explicit.

The correspondence between the $\Z^N$-grading in the two realizations of $U_q\slhs$ is as follows.

If $x\in U_q\slhs$ is given in the new Drinfeld realization and $\deg^v(x)=(d_1^v,\dots,d_{N-1}^v;d_\delta)$, then the grading in the Drinfeld-Jimbo realization is
\begin{align}
	  & \deg^h(\iota_{\s}(x))=(d_\delta,d_1^v+d_\delta,\dots,d_{N-1}^v+d_\delta).\label{iota deg} 
\end{align}

Similarly, if $x\in U_q\slhs$ is given in the Drinfeld-Jimbo realization and $\deg^h(x)=(d_0^h,d_1^h,\dots,d_{N-1}^h)$, then the grading in the new Drinfeld realization is
\begin{align}
	  & \deg^v(\iota_{\s}^{-1}(x))=(d_1^h-d_0^h,\dots,d_{N-1}^h-d_0^h;d_0^h).\label{iota^-1 deg} 
\end{align}

\medskip

\subsection{The anti-automorphisms \texorpdfstring{$\varphi$}{phi} and \texorpdfstring{$\eta$}{eta}}
We have two anti-automorphisms of $U_q\slhs$ which will be used in Sections \ref{tor section} and \ref{tor braid sec}.

\begin{lem}{\label{philem}}
There exists a superalgebra anti-automorphism $\varphi:\:U_q\slh_\bullet\rightarrow U_q\slh_\bullet,$
where $\varphi=\oplus_{\s \in \Smn}\varphi_{\s}$, and
	for $\s \in \Smn$, the anti-automorphism $\varphi_\s:U_q\slh_{\s}\rightarrow U_q\slhs$ is given on Chevalley generators by
	\begin{align*}
		  & \varphi_\s (e_i)=e_i, &   & \varphi_\s (f_i)= f_i, &   & \varphi_\s ( t_i)= t_i^{-1} &   & (i\in \hat I). 
	\end{align*}
	Moreover, 
	\begin{align*}
		  & (\varphi\, T_{i}\, \varphi)_\s=(T_{i,\sigma_{i}\s})^{-1} & (i\in \hat I). 
	\end{align*}
	\begin{proof}
		This is checked by a straightforward computation.
	\end{proof}
\end{lem}

Note that $\varphi_\s$ preserves the grading \eqref{DJ grading}.
\bigskip

\begin{lem}{\label{etalem}}
There exists a superalgebra anti-automorphism $\eta:\:U_q\slh_\bullet\rightarrow U_q\slh_\bullet,$
where $\eta=\oplus_{\s \in \Smn}\eta_{\s}$, and
	for $\s \in \Smn$, the anti-automorphism
	$\eta_\s:U_q\slhs \rightarrow U_q\slhs$  is given on current generators by
	\begin{align*}
		  & \eta_\s(c)=c, &   & \eta_\s(k_i^{\pm}(z))=k_i^{\mp}(cz^{-1}), &   & \eta_\s(x^{\pm}_i(z))= x^{\pm}_i(z^{-1}) &   & (i\in I). 
	\end{align*}
	Moreover, 
	\begin{align*}
		  & (\eta\, T_{i}\, \eta)_\s=(T_{i,\sigma_{i}\s})^{-1} & (i\in I). 
	\end{align*}
	\begin{proof}
		The existence of this anti-automorphism is checked directly. 
		
		For the last equality, note that $\eta_\s$ coincides with $\varphi_\s$ on the subalgebra generated by $x^\pm_{i,0}, k_i, c$, $i\in I$. Also, by the isomorphism between the new Drinfeld and Drinfeld--Jimbo realizations of $U_q\slhs$, it suffices only to check the identity on $x^\pm_{1,\mp 1}$.
		
		We verify 
		$(\eta\, T_{i}\, \eta)_\s =(T_{i,\sigma_{i}\s})^{-1}$ on $x^-_{1,1}$ for $i=1,2$. The remaining values of $i$ are trivial. The check for $x^+_{1,-1}$ is analogous.
		
		Using the relation \eqref{eab8} we have
		\begin{align*}
			  & (\eta T_1 \eta)_\s (x^-_{1,1})=-(\eta T_{1} \cX_{1}^{-1})_\s(x^-_{1,0}) =-(\eta \cX_1 \cX_{2}^{-1}T_{1}^{-1})_{\s}(x^-_{1,0})=\eta_{\sigma_1 \s}(-s_1c^{-1}x^+_{1,-1}k_1)=-s_1c^{-1}k_1^{-1}x^+_{1,1}, \\
			  & (T_{1,\sigma_{1}\s})^{-1}(x^-_{1,1})=-(T_{1}^{-1}\cX_1)_{\s}(x^-_{1,0})=-(\cX_2 \cX_1^{-1}T_1)_{\s}(x^-_{1,0})=-s_1c^{-1}k_1^{-1}x^+_{1,1}.                                                            
		\end{align*}
		
		And using the relation \eqref{eab7} we have
		\begin{align*}
			  & (\eta T_2 \eta)_\s (x^-_{1,1})=-(\eta T_{2} \cX_{1}^{-1})_{\s}(x^-_{1,0}) =(\eta \cX_{1}^{-1})_{\sigma_2  \s}((-1)^{|1||2|}\lb{x^-_{1,0},x^-_{2,0}})=-\lb{x^-_{2,0},x^-_{1,1}}, \\
			  & (T_{2,\sigma_{2}\s})^{-1}(x^-_{1,1})=-(T_{2}^{-1}\cX_1)_{\s}(x^-_{1,0})=\cX_{1,\sigma_2 \s}(\lb{x^-_{2,0},x^-_{1,0}})=-\lb{x^-_{2,0},x^-_{1,1}}.                                
		\end{align*}
		This completes the proof.
	\end{proof}
\end{lem}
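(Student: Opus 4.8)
The plan is to prove the two assertions separately: first the existence of the anti-automorphism $\eta_\s$, and then the conjugation identity $(\eta\,T_i\,\eta)_\s=(T_{i,\sigma_i\s})^{-1}$ for $i\in I$.

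For existence, I would declare $\eta_\s$ on the current generators by the stated formulas and check that each defining relation of $U_q\slhs$ in the new Drinfeld realization is preserved, keeping in mind that a superalgebra anti-automorphism reverses products with the sign $(-1)^{|X||Y|}$. Most relations are symmetric under the spectral inversion $z\mapsto z^{-1}$ once the two (or more) factors are reversed, so they survive automatically: the centrality of $c$, the $k_i$-conjugation relations, and the quadratic and Serre relations all reproduce themselves after reversing the order of the currents and sending each $z_a\mapsto z_a^{-1}$. The delicate point, which I expect to be the crux of this part, is the cross relation $[x^+_i(z),x^-_j(w)]=\frac{\delta_{i,j}}{q-q^{-1}}(\delta(cw/z)\,k^+_i(w)-\delta(cz/w)\,k^-_i(z))$: one must verify that applying $\eta_\s$ (which reverses the commutator and contributes the super-sign) together with $z\mapsto z^{-1}$, $w\mapsto w^{-1}$ sends $\delta(cw/z)\mapsto\delta(cz/w)$ and $k^\pm_i\mapsto k^\mp_i$, interchanging the two terms correctly. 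This is exactly what forces the shift by $c$ in $\eta_\s(k_i^\pm(z))=k_i^\mp(cz^{-1})$, and getting the powers of $c$ and the signs right here is the only genuine computation of the existence part.

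For the conjugation identity, observe first that both $(\eta\,T_i\,\eta)_\s=\eta_{\sigma_i\s}\,T_{i,\s}\,\eta_\s$ and $(T_{i,\sigma_i\s})^{-1}$ are honest algebra homomorphisms $U_q\slhs\to U_q\slh_{\sigma_i\s}$ (an anti-automorphism conjugated by an automorphism is again an automorphism), so it suffices to compare them on a generating set. Via the isomorphism $\iota_\s$ and Lemma \ref{iota inverse}, such a set is provided by $x^\pm_{i,0}$ and $k_i$ ($i\in I$), the central element $c$, and the modes $x^\pm_{1,\mp1}$, whose images under $\iota_\s$ give the affine Chevalley generators $e_0,f_0$. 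On $x^\pm_{i,0}$ and $k_i$ the map $\eta_\s$ agrees with the Chevalley anti-automorphism $\varphi_\s$, since $\eta_\s(x^\pm_{i,0})=x^\pm_{i,0}$ and $\eta_\s(k_i)=k_i^{-1}$; because $T_{i,\s}$ preserves the finite subalgebra $U_q\sl_\s$ for $i\in I$, the whole computation stays inside the region where $\eta=\varphi$, and the identity on these generators follows at once from Lemma \ref{philem}. On $c$ there is nothing to do, as $T_{i}$ and $\eta$ each fix the central element, so both sides fix $c$.

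It then remains to verify the identity on $x^\pm_{1,\mp1}$, and by the $x^+\!\leftrightarrow\! x^-$ symmetry of $\eta_\s$ it is enough to treat $x^-_{1,1}$ (the case of $x^+_{1,-1}$ being identical). For $i\neq1,2$ the mode $x^-_{1,1}$ is fixed by $T_{i,\s}$ and the claim is trivial, so only $i=1,2$ require work. Here I would first use Proposition \ref{X action} to write $\eta_\s(x^-_{1,1})=x^-_{1,-1}=-\cX_{1,\s}^{-1}(x^-_{1,0})$, then apply $T_1$ and invoke the extended affine braid relation \eqref{eab8} to rewrite $T_1\cX_1^{-1}=\cX_1\cX_2^{-1}T_1^{-1}$ (for $i=2$, instead commuting $T_2$ past $\cX_1$ using \eqref{eab7}), so that each side can be evaluated explicitly and matched. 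I expect this last step to be the main obstacle: the work lies in organizing the braid-group manipulation so that the anti-automorphism $\eta$, the spectral inversion, and the mode shift produced by $\cX_1$ combine to reproduce the inverse braid action, with all super-signs and powers of $c$ accounted for.
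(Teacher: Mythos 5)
Your proposal is correct and follows essentially the same route as the paper's own proof: existence is checked directly on the defining relations, and the conjugation identity is reduced via $\iota_\s$ to the generators $x^\pm_{i,0}$, $k_i$, $c$ (where $\eta$ coincides with $\varphi$ and Lemma \ref{philem} applies) plus the single nontrivial check on $x^\pm_{1,\mp1}$ for $i=1,2$, carried out by writing the mode shift through $\cX_1$ and invoking the braid relations \eqref{eab8} and \eqref{eab7}. No gaps.
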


Note that, if $x\in U_q\slhs$ is given in the new Drinfeld realization and $\deg^v(x)=(d_1,\dots,d_{N-1};d_\delta)$, then 
\begin{equation}{\label{eta deg}}
	\begin{aligned}
		  & \deg^v(\eta_{\s}(x))=(d_1,\dots,d_{N-1};-d_\delta). 
	\end{aligned}
\end{equation}

Both anti-automorphisms $\varphi_\s$ and $\eta_\s$ are anti-involutions: $\varphi_\s^2=\eta_\s^2=1$.

\section{Quantum toroidal superalgebra \texorpdfstring{$\Es$}{Es}}{\label{tor section}}
The quantum toroidal algebra associated with $\gl_{m|n}$ and standard parity was introduced in \cite{BM}. In this section, we introduce the quantum toroidal algebra $\E_\s$ associated with $\gl_{m|n}$ for any choice of parity $\s$. We give a few properties of these algebras.

\subsection{Definition of \texorpdfstring{$\Es$}{Es}}
Fix $d,q \in \mathbb{C}^\times$ and define 
$$q_1=d\,q^{-1},\quad q_2=q^2,\quad q_3=d^{-1}q^{-1}.$$

Note that $q_1q_2q_3=1$. In this paper we always assume that $q_1,q_2$ are generic, meaning that $q_1^{n_1}q_2^{n_2}q_3^{n_3}=1$, $n_1, n_2,n_3\in \mathbb{Z}$, iff  $n_1=n_2=n_3$. Fix also $d^{1/2}, q^{1/2}\in \mathbb{C}^\times$ such that $(d^{1/2})^2=d,\; (q^{1/2})^2=q$.

Recall the affine Cartan matrix $\hat A^\s=(A_{i,j}^\s)_{i,j,\in \hat I}$, see \eqref{aff cartan}.

We also define the matrix ${M}^{\s}=({M}^{\s}_{i,j})_{i,j\in \hat{I}}$ by $M^{\s}_{i+1,i}=-M^{\s}_{i,i+1}=s_{i+1}$, and $M^{\s}_{i,j}=0$, $i\neq j\pm 1$.

\begin{dfn}{\label{defE}}
	The \textit{quantum toroidal algebra associated with $\gl_{m|n}$} and a parity sequence $\s$ is the unital associative superalgebra $\Es=\Es(q_1,q_2,q_3)$  generated by $E_{i,r},F_{i,r},H_{i,r}$, 
	and invertible elements $K_i$, $C$, where $i\in \hat{I}$, $r\in \Z'$, subject to the defining relations \eqref{relCK}-\eqref{Serre6} below.
	The parity of the generators is given by $|E_{i,r}|=|F_{i,r}|=|i|=(1-s_is_{i+1})/2$, and all remaining generators have parity $0$. 
\end{dfn}

We use generating series
\begin{align*}
	E_i(z)=\sum_{k\in\Z}E_{i,k}z^{-k}, \quad                                            
	F_i(z)=\sum_{k\in\Z}F_{i,k}z^{-k}, \quad                                            
	K^{\pm}_i(z)=K_i^{\pm1}\exp\Bigl(\pm(q-q^{-1})\sum_{r>0}H_{i,\pm r}z^{\mp r}\Bigr). 
\end{align*}
	
\medskip
Let also $\displaystyle{\delta\left(z\right)=\sum_{n\in \mathbb{Z}} z^n}$ be the formal delta function.
\medskip
	
Then the defining relations are as follows.
\bigskip

\noindent{\bf $C,K$ relations}
\begin{align}{\label{relCK}}
	&\text{$C$ is central}, 
	  &   & K_iK_j=K_jK_i,                             
	  &   & K_iE_j(z)K_i^{-1}=q^{A_{i,j}^{\s}}E_j(z),  
	  &   & K_iF_j(z)K_i^{-1}=q^{-A_{i,j}^{\s}}F_j(z). 
\end{align}
\bigskip
	
\noindent{\bf $K$-$K$, $K$-$E$ and $K$-$F$ relations}
\begin{align}
	  & K^\pm_i(z)K^\pm_j (w) = K^\pm_j(w)K^\pm_i (z),                                                                                              
	\label{KK1}\\
	  & \frac{d^{M_{i,j}^{\s}}C^{-1}z-q^{A_{i,j}^{\s}}w}{d^{M_{i,j}^{\s}}Cz-q^{A_{i,j}^{\s}}w}                                                      
	K^-_i(z)K^+_j (w) 
	=
	\frac{d^{M_{i,j}^{\s}}q^{A_{i,j}^{\s}}C^{-1}z-w}{d^{M_{i,j}^{\s}}q^{A_{i,j}^{\s}}Cz-w}
	K^+_j(w)K^-_i (z),
	\label{KK2}\\
	  & (d^{M_{i,j}^{\s}}z-q^{A_{i,j}^{\s}}w)K_i^\pm(C^{-(1\pm1)/2}z)E_j(w)=(d^{M_{i,j}^{\s}}q^{A_{i,j}^{\s}}z-w)E_j(w)K_i^\pm(C^{-(1\pm1) /2}z),   
	\label{KE}\\
	  & (d^{M_{i,j}^{\s}}z-q^{-A_{i,j}^{\s}}w)K_i^\pm(C^{-(1\mp1)/2}z)F_j(w)=(d^{M_{i,j}^{\s}}q^{-A_{i,j}^{\s}}z-w)F_j(w)K_i^\pm(C^{-(1\mp1) /2}z). 
	\label{KF}
\end{align}
\bigskip

\noindent{\bf $E$-$F$ relations}
\begin{align}\label{EF}
	  & [E_i(z),F_j(w)]=\frac{\delta_{i,j}}{q-q^{-1}} 
	(\delta\left(C\frac{w}{z}\right)K_i^+(w)
	-\delta\left(C\frac{z}{w}\right)K_i^-(z)).
\end{align}
\bigskip
	
\noindent{\bf $E$-$E$ and $F$-$F$ relations}
\begin{align}
	  & [E_i(z),E_j(w)]=0\,, \quad [F_i(z),F_j(w)]=0\, \quad                                                                     & (A_{i,j}^{\s}=0),   \label{EE FF} 
	\\
	  & (d^{M_{i,j}^{\s}}z-q^{A_{i,j}^{\s}}w)E_i(z)E_j(w)=(-1)^{|i||j|}(d^{M_{i,j}^{\s}}q^{A_{i,j}^{\s}}z-w)E_j(w)E_i(z) \quad   & (A_{i,j}^{\s}\neq0),              
	\\
	  & (d^{M_{i,j}^{\s}}z-q^{-A_{i,j}^{\s}}w)F_i(z)F_j(w)=(-1)^{|i||j|}(d^{M_{i,j}^{\s}}q^{-A_{i,j}^{\s}}z-w)F_j(w)F_i(z) \quad & (A_{i,j}^{\s}\neq0).              
\end{align}
\bigskip
	
\noindent{\bf Serre relations}
\begin{align}
	  & \Sym_{{z_1,z_2}}\lb{E_i(z_1),\lb{E_i(z_2),E_{i\pm1}(w)}}=0\,\quad & (A^{\s}_{i,i}\neq 0),\label{Serre1} \\ 
	  & \Sym_{{z_1,z_2}}\lb{F_i(z_1),\lb{F_i(z_2),F_{i\pm1}(w)}}=0\,\quad & (A^{\s}_{i,i}\neq 0),               
	\label{Serre2}
\end{align}
	
If $mn\neq 2$,
\begin{align}
	  & \Sym_{{z_1,z_2}}\lb{E_i(z_1),\lb{E_{i+1}(y),\lb{E_i(z_2),E_{i-1}(w)}}}=0\,\quad & (A^{\s}_{i,i}= 0),\label{Serre3} 
	\\
	  & \Sym_{{z_1,z_2}}\lb{F_i(z_1),\lb{F_{i+1}(y),\lb{F_i(z_2),F_{i-1}(w)}}}=0\,\quad & (A^{\s}_{i,i}= 0).\label{Serre4} 
\end{align}
	
If $mn=2$,
\begin{align}
	\label{Serre5} & \Sym_{{z_1,z_2}}\Sym_{{w_1,w_2}}\lb{E_{i-1}(z_1),\lb{E_{i+1}(w_1),\lb{E_{{i-1}}(z_2),\lb{E_{i+1}(w_2),E_{i}(y)}}}}=    \\ \notag
	  & =\Sym_{{z_1,z_2}}\Sym_{{w_1,w_2}}\lb{E_{i+1}(w_1),\lb{E_{i-1}(z_1),\lb{E_{{i+1}}(w_2),\lb{E_{i-1}(z_2),E_{i}(y)}}}}\, &   & (A^{\s}_{i,i}\neq  0), \\
	\label{Serre6} & \Sym_{{z_1,z_2}}\Sym_{{w_1,w_2}}\lb{F_{i-1}(z_1),\lb{F_{i+1}(w_1),\lb{F_{{i-1}}(z_2),\lb{F_{i+1}(w_2),F_{i}(y)}}}}=    \\ \notag
	  & =\Sym_{{z_1,z_2}}\Sym_{{w_1,w_2}}\lb{F_{i+1}(w_1),\lb{F_{i-1}(z_1),\lb{F_{{i+1}}(w_2),\lb{F_{i-1}(z_2),F_{i}(y)}}}}\, &   & (A^{\s}_{i,i}\neq  0). 
\end{align}

\medskip
	
The relations \eqref{KK1}-\eqref{KF} are equivalent to 
\begin{align}
	  & [H_{i,r},E_j(z)]= \frac{[r A^\s_{i,j}]}{r}d^{-rM^\s_{i,j}}C^{-(r+|r|)/2}           
	\,z^r E_j(z)\,,\label{HE}
	\\
	  & [H_{i,r},F_j(z)]=- \frac{[r A^\s_{i,j}]}{r}d^{-rM^\s_{i,j}}C^{-(r-|r|)/2}          
	\,z^r F_j(z)\,,\label{HF}
	\\
	  & [H_{i,r},H_{j,s}]=\delta_{r+s,0} \cdot  \frac{[r A^\s_{i,j}]}{r}d^{-rM^\s_{i,j}}\, 
	\frac{C^r-C^{-r}}{q-q^{-1}}\,,\label{h_rel}
\end{align}
for all $r\in \Z'$, $i,j \in \hat{I}$.

The relations \eqref{Serre1} and \eqref{Serre2} are also satisfied if $A^{\s}_{i,i}= 0$, due to the quadratic relations \eqref{EE FF}.

The element	$K:=K_0 K_1\cdots K_{N-1}$ is central.

\medskip	
	
For any $J\subset \hat I$, let $\Es^J\subset \Es$ be the subalgebra generated by $E_i(z), F_i(z), K_i^\pm(z)$, $C$, $i\in J$. We call $\Es^J$ the {\it diagram subalgebra associated with J} of $\E_\s$.

\subsection{Some properties of \texorpdfstring{$\E_\s$}{Es}}

For each $i \in \hat{I}$, the superalgebra $\Es$ has a $\Z$-grading given by 
\begin{align*}
	  & \deg_i(E_{j,r})=\delta_{i,j},\quad \deg_i(F_{j,r})=-\delta_{i,j},\quad \deg_i(H_{j,r})=\deg_i  (K_j)=\deg_i(C)=0\quad  (j\in \hat{I},\, r\in \Z'). 
\end{align*}

There is also the \textit{homogeneous $\Z$-grading} given by
\begin{align*}
	\deg_{\delta}(E_{j,r})=\deg_{\delta}(F_{j,r})=r, \quad \deg_{\delta}(H_{j,r})=r, \quad \deg_{\delta}(K_j)=\deg_{\delta}(C)=0 \quad  (j\in \hat{I},\, r\in \Z'). 
\end{align*}
	
Thus, the superalgebra $\Es$ has a $\Z^{N+1}$-grading given on a homogeneous element $X\in \Es$ by
\begin{align}
	\deg (X)=\left(\deg_0(X), \deg_1(X),\dots,\deg_{N-1}(X);\deg_{\delta}(X) \right). 
\end{align}

\medskip 

The superalgebra $\Es$ has a graded topological Hopf superalgebra structure given on generators by
\begin{align*}
	  & \Delta E_i(z)=E_i(z)\otimes 1 + K^-_i(z)\otimes E_i(C_1z),                                 \\
	  & \Delta F_i(z)=F_i(C_2z)\otimes K^+_i(z) + 1\otimes F_i(z),                                 \\
	  & \Delta K^+_i(z)=K^+_i(C_2z)\otimes K^+_i(z),                                               \\
	  & \Delta K^-_i(z)=K^-_i(z)\otimes K^-_i(C_1z),                                               \\
	  & \Delta C=C\otimes C,                                                                       \\
	  & \varepsilon(E_i(z))=\varepsilon(F_i(z))=0,\quad \varepsilon(K^\pm _i(z))=\varepsilon(C)=1, \\
	  & S(E_i(z))=-\left(K^-_i(C^{-1}z)\right)^{-1}E_i(C^{-1}z),                                   \\
	  & S(F_i(z))=-F_i(C^{-1}z)\left(K^+_i(C^{-1}z)\right)^{-1},                                   \\
	  & S(K^\pm_i(z))=\left(K^\pm _i(C^{-1}z) \right)^{-1},\quad S(C)=C^{-1},                      
\end{align*}
where $C_1=C\otimes 1$, $C_2=1\otimes C$. In particular, we have $S(xy)=(-1)^{|x||y|}S(y)S(x)$. Note that we always use the graded tensor product multiplication defined for homogeneous elements $x_1, x_2, y_1, y_2 \in\Es $ by $(x_1\otimes y_1)(x_2\otimes y_2)=(-1)^{|y_1||x_2|}x_1x_2\otimes y_1y_2$ and extended to the whole algebra by linearity.

\subsection{Horizontal and vertical subalgebras}{\label{hv sec}}
Let $\s$ be a parity sequence. For $i\in I$, define $\mu_\s(i)=-\sum_{j=1}^i s_j$. 
Define the {\it vertical homomorphism} of superalgebras $v_\s:\U_q\slhs \rightarrow \Es$ by
\begin{align*}
	  & v_\s(x^+_i(z))= E_i(d^{\mu_\s(i)}z), \quad  v_\s(x^-_i(z))= F_i(d^{\mu_\s(i)}z),\quad v_\s(k^\pm_i(z))= K_i^\pm(d^{\mu_\s(i)}z),\quad   v_\s(c)= C & (i\in I). 
\end{align*}

Note that if $x \in \U_q\slhs$ and $\deg^v(x)=(d_1,d_2,\dots,d_{N-1};d_\delta)$, then 
\begin{equation}{\label{v deg}}
	\begin{aligned}
		  & \deg(v_{\s}(x))=(0,d_1,d_2,\dots,d_{N-1};d_\delta). 
	\end{aligned}
\end{equation}

\begin{prop}{\label{lemv}}
	The vertical homomorphism $v_\s$ is injective for generic values of parameters.
\end{prop}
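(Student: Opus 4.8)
The plan is to prove injectivity by combining the grading \eqref{v deg} with the triangular decomposition of $U_q\slhs$. First I would record that $v_\s$ strictly respects the gradings: by \eqref{v deg} an element of $\deg^v$-degree $(d_1,\dots,d_{N-1};d_\delta)$ is sent into the single $\Es$-component of degree $(0,d_1,\dots,d_{N-1};d_\delta)$, so $\Ker v_\s$ is a graded subspace and it suffices to argue on each homogeneous piece. In the new Drinfeld realization $U_q\slhs$ has a triangular decomposition $U_q\slhs=U^-U^0U^+$, with a PBW vector-space isomorphism $U^-\otimes U^0\otimes U^+\cong U_q\slhs$, where $U^\pm$ is generated by the $x^\pm_{i,r}$ and $U^0$ by the $h_{i,r},k_i^{\pm1},c$, $i\in I$. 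Since $v_\s$ is an algebra map sending $U^-,U^0,U^+$ into the respective subalgebras $\mathcal E^-,\mathcal E^0,\mathcal E^+$ of $\Es$ generated by the $F_i(z)$, by the $H_{i,r},K_i^{\pm1},C$, and by the $E_i(z)$ with $i\in I$, the problem factors into two tasks: injectivity of $v_\s$ on each triangular factor, and injectivity of the multiplication map $\mathcal E^-\otimes\mathcal E^0\otimes\mathcal E^+\to\Es$.

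The Cartan factor is the easy and instructive case, and it is here that the hypotheses enter. Matching the coefficients in $v_\s(k^\pm_i(z))=K_i^\pm(d^{\mu_\s(i)}z)$ gives $v_\s(h_{i,r})=d^{-r\mu_\s(i)}H_{i,r}$, and the $H_{i,r}$, $i\in I$, satisfy \eqref{h_rel} with bilinear form $\tfrac{[rA^\s_{i,j}]}{r}d^{-rM^\s_{i,j}}\tfrac{C^r-C^{-r}}{q-q^{-1}}$. Restricted to $i,j\in I$ the matrix $(A^\s_{i,j})$ is the finite Cartan matrix, which has $|\det A^\s|=|m-n|\neq0$; hence for generic $q_1,q_2,q_3$ the corresponding Heisenberg form is nondegenerate for every $r$, and $v_\s|_{U^0}$ is injective. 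For the two nilpotent factors it suffices to treat $U^+$, since the defining relations of $\Es$ are symmetric under interchanging the $E_i(z)$ with the $F_i(z)$ (and $K^+_i\leftrightarrow K^-_i$), so the argument for $U^-$ is parallel.

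The essential point, and the main obstacle, is injectivity of $v_\s|_{U^+}$, equivalently that the natural surjection from $U^+$ onto $\mathcal E^+$ is an isomorphism. After the change of variables $z\mapsto d^{\mu_\s(i)}z$ the quadratic and Serre relations among the $E_i(z)$, $i\in I$, are exactly the current relations of $U^+$, so the issue is purely one of size: one must show that the images of a PBW basis of $U^+$ remain linearly independent in $\Es$. I would establish this via a faithful representation, constructing an $\Es$-module from a tensor product of vector (Fock-type) representations with generic spectral parameters, on which $\mathcal E^+$, and hence $v_\s(U^+)$, acts with the full expected size; genericity of $q_1,q_2,q_3$ guarantees that the relevant matrix coefficients are nonzero, so that no nontrivial combination of PBW monomials annihilates a highest-weight vector. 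This is the super-analogue of the computation carried out for the standard parity in \cite{BM}, which one can adapt to an arbitrary parity sequence. Finally, the triangular decomposition $\Es=\mathcal E^-\mathcal E^0\mathcal E^+$, obtained from the straightening provided by the $E$-$F$ relation \eqref{EF} together with the homogeneous grading, shows that the multiplication map is injective; assembling the three factors then yields injectivity of $v_\s$.
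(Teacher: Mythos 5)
Your reduction to the triangular factors correctly isolates where the difficulty lies, but the decisive step is asserted rather than proved, and that assertion is precisely the content of the proposition. Injectivity of $v_\s|_{U^\pm}$ amounts to showing that the relations of $\Es$ involving the node $0$ (the $E$-$F$ relation \eqref{EF} with $i=j=0$, the mixed quadratic relations, and the Serre relations \eqref{Serre3}--\eqref{Serre6} containing $E_0$) do not force new relations inside the diagram subalgebra $\E_\s^I$. You propose to settle this with ``a tensor product of vector (Fock-type) representations with generic spectral parameters'' on which $v_\s(U^+)$ acts with full size, but no such modules are constructed for $\E_\s$ with arbitrary parity in this paper, and the reference to \cite{BM} is inaccurate: there the standard-parity case was handled via an evaluation map, not via Fock modules. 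Likewise, the injectivity of the multiplication map $\mathcal E^-\otimes\mathcal E^0\otimes\mathcal E^+\to\Es$ for the subalgebras attached to $I$ is not a formal consequence of \eqref{EF} and the grading; it again requires controlling the size of $\E_\s^I$ inside the quotient by all defining relations, which is the same unproved point. (The Cartan computation via $|\det A^\s|=|m-n|\neq 0$ is fine but does not touch the real issue.)

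The paper closes this gap by an entirely different device: the evaluation homomorphism $\eva_u^\s:\Es\to\widetilde{U}_q\widehat{\mathfrak{gl}}_{\s}$ of Appendix \ref{app 1}, defined under the resonance condition $C^2=q_3^{m-n}$, restricts on the image of $v_\s$ to a left inverse of $v_\s$ (composed with the embedding $U_q\slhs\hookrightarrow\widetilde{U}_q\widehat{\mathfrak{gl}}_{\s}$), which gives injectivity at resonance and hence for generic parameters by a specialization argument. If you want to salvage your route, you would need to actually exhibit a faithful family of $\Es$-modules for every parity $\s$, or else fall back on some explicit splitting of $v_\s$ such as the evaluation map.
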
	
\begin{proof}
	This proposition was proved in \cite{BM} in the case $\s$ is the standard parity sequence using the existence of the evaluation map. An evaluation map for any choice of parity $\s$ under the resonance condition $q_3^{m-n}=C^2$ is given in Appendix \ref{app 1}. It provides a left inverse for $v_\s$ and therefore, $v_\s$ is injective under the resonance condition. The property of being injective is open. Thus $v_\s$ is injective for generic parameters.
\end{proof}
The image of the vertical homomorphism coincides with $\E^I_\s$. We denote this subalgebra $U^{ver}_q\slhs$ and call it the {\it vertical quantum affine $\mathfrak{sl}_{\s}$}.

The vertical subalgebra $U^{ver}_q\slhs$ is a Hopf subalgebra of $\Es$.

\begin{cor}
	Let $J\subset \hat I$, $J\neq \hat I$. Then for generic values of parameters the diagram subalgebra $\E_\s^J$ is isomorphic to tensor product of quantum affine superalgebras $U_q\slh_{k|l}$ with central elements $c$ in all factors set equal to each other. \qed
\end{cor}

We denote by $U^{hor}_q\slhs$ the subalgebra of $\Es$ generated by $E_{i,0}, F_{i,0}, K_i, i \in \hat{I}$, and we call it the {\it horizontal quantum affine $\mathfrak{sl}_{\s}$}.
	
We have a {\it horizontal homomorphism} of superalgebras $h_\s:\U_q\slhs \rightarrow \Es$ given by
\begin{align*}
	  & e_i\mapsto E_{i,0}\,, \quad  f_i\mapsto F_{i,0}\,,\quad t_i\mapsto K_i & (i\in \hat{I}), 
\end{align*}    
with image $U^{hor}_q\slhs$. 
	
We will later prove (for $N>3$) that for generic values of parameters the horizontal homomorphism $h_\s$ is injective, see Corollary \ref{h inj}. Note that it is not a Hopf algebra map.

Note that, if $x \in \U_q\slhs$ and $\deg^h(x)=(d_0,d_1,d_2,\dots,d_{N-1})$, then 
\begin{equation}{\label{h deg}}
	\begin{aligned}
		  & \deg(h_{\s}(x))=(d_0,d_1,d_2,\dots,d_{N-1};0). 
	\end{aligned}
\end{equation}

\begin{lem}
	The quantum toroidal algebra $\E_\s$ is generated by the vertical and horizontal subalgebras $U^{ver}_q\slhs$ and $U^{hor}_q\slhs$.
\end{lem}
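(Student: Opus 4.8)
The plan is to show that the subalgebra generated by $U^{ver}_q\slhs$ and $U^{hor}_q\slhs$ contains all the generators $E_{i,r}, F_{i,r}, H_{i,r}, K_i, C$ of $\Es$ for $i\in\hat I$ and $r\in\Z'$. The vertical subalgebra $U^{ver}_q\slhs=\E^I_\s$ already supplies all generators indexed by $i\in I$, namely $E_{i,r}, F_{i,r}, H_{i,r}, K_i, C$ with $i\in I$, since $v_\s(x^\pm_i(z))$ and $v_\s(k^\pm_i(z))$ recover them up to the harmless rescaling by $d^{\mu_\s(i)}$. Thus the only generators not yet accounted for are those with affine index $i=0$: the currents $E_0(z), F_0(z)$, the Cartan series $K_0^\pm(z)$, and the element $K_0$. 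So the entire problem reduces to producing $E_{0,r}, F_{0,r}, H_{0,r}, K_0$ from the two subalgebras.

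First I would obtain $K_0$: the horizontal subalgebra contains $K_0$ directly (it is a Chevalley generator $t_0\mapsto K_0$), and the centrality of $K=K_0K_1\cdots K_{N-1}$ together with $K_1,\dots,K_{N-1}\in U^{ver}_q\slhs$ gives another route. The horizontal subalgebra also supplies $E_{0,0}=h_\s(e_0)$ and $F_{0,0}=h_\s(f_0)$. The heart of the argument is then to generate the remaining modes $E_{0,r}, F_{0,r}$ with $r\neq 0$, and from the $E$-$F$ relation \eqref{EF} the Cartan modes $H_{0,r}$ will follow automatically once enough $E_{0,r}, F_{0,r}$ are in hand. To get the higher modes of the zeroth current, I would use the horizontal generators $E_{1,0}, E_{N-1,0}$ (and their $F$-counterparts), which lie in $U^{hor}_q\slhs$, to act by the quantum-Serre-type bracketing relations \eqref{KE}, \eqref{EE FF}, \eqref{Serre1}--\eqref{Serre4} on the vertical modes $E_{1,r}, E_{N-1,r}\in U^{ver}_q\slhs$. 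Concretely, a bracket such as $\lb{E_{1,r},E_{0,0}}$ or an iterated bracket built from $E_{0,0}$ and the vertical modes of the neighboring nodes should produce $E_{0,r}$ up to a nonzero scalar, translating modes along the affine node exactly as the relations \eqref{Tfix2}--\eqref{Tfix5} do for the finite nodes in $U_q\slhs$.

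The main obstacle is precisely this mode-shifting step at the affine node: unlike the finite nodes $i\in I$, where the vertical homomorphism directly gives all modes, the zeroth node only enters through the single horizontal generator $E_{0,0}$, so one must manufacture its tower of modes indirectly. The cleanest way to control this is to import the corresponding statement from $U_q\slhs$ itself. The vertical and horizontal subalgebras share a common copy of $U_q\sl_{m|n}$ (the non-affine part), and the affine generator $e_0$ is expressed in Lemma \ref{iota inverse} via $e_0\mapsto(\cX_1 T_{N-1}\cdots T_2 T_1^{-1})_\s(x^+_{1,0})$; reading this relation the other way shows that iterated commutators of $x^\pm_{1,0}$ with finite-node data recover the full zeroth current in $U_q\slhs$, and applying both $v_\s$ and $h_\s$ transports this to $\Es$. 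I expect the verification to be routine once the indices are set up, using the quantum Jacobi identity \eqref{q der} and the quadratic relations to collapse the brackets; the subtlety to watch is keeping track of the $d^{\mu_\s(i)}$ rescalings in $v_\s$ and the parity signs, but these do not affect which subalgebra the resulting elements lie in.

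Having produced $E_{0,r}, F_{0,r}$ for all $r$ and hence, via \eqref{EF}, the series $K_0^\pm(z)$ and all $H_{0,r}$, together with $K_0$ and the already-available generators at nodes $i\in I$, one concludes that every defining generator of $\Es$ lies in the subalgebra generated by $U^{ver}_q\slhs$ and $U^{hor}_q\slhs$, which therefore equals $\Es$.
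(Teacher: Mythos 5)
Your reduction is the same as the paper's: everything at nodes $i\in I$ comes from the vertical subalgebra, $E_{0,0},F_{0,0},K_0$ come from the horizontal one, and the whole problem is to manufacture the modes $E_{0,r},F_{0,r}$ with $r\neq 0$ (after which the $H_{0,r}$ indeed follow from \eqref{EF}). But the step you propose for that crucial point does not work. A bracket such as $\lb{E_{1,r},E_{0,0}}$ has $\deg_0=\deg_1=1$, i.e.\ it lies in the weight space of $\alpha_0+\alpha_1$, so it cannot be ``$E_{0,r}$ up to a nonzero scalar'': no amount of bracketing $E_{0,0}$ against $E$-modes of neighbouring nodes returns you to the weight space of $\alpha_0$. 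The analogy with \eqref{Tfix2}--\eqref{Tfix5} is misleading for the same reason --- those formulas express the image of a node-$(i\pm1)$ current under $T_{i,\s}$ as a bracket with $x^\pm_{i,0}$; they change the node, they do not shift the mode index at a fixed node. Your fallback of ``reading Lemma \ref{iota inverse} the other way'' is not an argument either: that lemma writes $e_0$ as a braid-group image of $x^+_{1,0}$, and it gives no mechanism for producing the tower $E_{0,r}$, $r\neq 0$, inside the subalgebra generated by the two copies of $U_q\slh_{m|n}$.

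The missing idea is to use the \emph{Cartan} modes of the vertical subalgebra rather than the $E$-modes: by \eqref{HE} and \eqref{HF}, $\left[H_{1,\pm1},E_0(z)\right]$ is a nonzero multiple of $z^{\pm1}E_0(z)$ times a power of $C$ (nonzero because $A^{\s}_{1,0}=-s_1\neq 0$), so commuting $E_{0,0}$ repeatedly with $H_{1,1}$ and $H_{1,-1}$ produces every $E_{0,r}$, and likewise for $F_{0,r}$. Since $H_{1,\pm1}\in U^{ver}_q\slhs$ and $E_{0,0},F_{0,0}\in U^{hor}_q\slhs$, this is weight-preserving, mode-shifting, and immediate --- it is exactly the paper's one-line proof. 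Your write-up correctly isolates what must be shown but replaces this device with operations that cannot succeed on grading grounds, so as it stands the proof has a genuine gap at its central step.
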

\begin{proof}
	The only generators which are not generators of either the vertical or horizontal subalgebras are $E_{0,r}$, $F_{0,r}$, $r\in \Z^\times$. These generators are obtained as commutators of $E_{0,0}, F_{0,0}\in U^{hor}_q\slhs$ and $H_{1,\pm 1}\in U^{ver}_q\slhs$, see \eqref{HE}, \eqref{HF}.
\end{proof}

We will often use a shortcut notation 
\begin{equation}
	\begin{aligned}
		&X^+_0(z):=E_0(z), \qquad X^-_0(z):=F_0(z), \qquad \tilde{K}^\pm_0(z):=K^\pm_0(z), \\
		  & X^\pm_i(z):=v_\s(x^\pm_i(z)), \qquad \tilde{K}^\pm_i(z):=v_\s(k^\pm_i(z)) &   & (i\in I).\label{X not} 
	\end{aligned}
\end{equation}

\subsection{Morphisms}\label{sec mor}
We list some symmetries of the superalgebras $\Es$.

Set 
$$
\E_\bullet=\bigoplus_{\s\in \Smn} \Es.
$$
We have a number of maps which depend on a parity sequence $\s$. We always consider the direct sums of such maps. For example,
$h=\oplus_{\s \in \Smn} h_{\s}$ and $v=\oplus_{\s \in \Smn} v_{\s}$ are maps from $U_q\slh_\bullet$ to $\E_\bullet$.

Given $\s\in \Smn$, let $\s'=(s_{m-1},s_{m-2},\dots, s_{-n})$. The {\it diagram isomorphism} $\omega_\s:\Es(q_1,q_2,q_3) \rightarrow \E_{\s'}(q_3,q_2,q_1)$ is defined by
\begin{align*}
	  & \omega_{\s}(C)=C, &   & \omega_{\s}(A_i(z))=A_{m-i}(z) &   & (i\in \hat{I},\, A=K^\pm,E,F). 
\end{align*}
Note that it changes $d$ to $d^{-1}$.

\medskip
Given $\s\in \Smn$, let $-\s=(-s_{1},-s_{2},\dots, -s_{N})\in \S_{n|m}$. The {\it change of parity} isomorphism $\nu_\s:\Es(q_1,q_2,q_3) \rightarrow \E_{-\s}(q_3^{-1},q_2^{-1},q_1^{-1})$ is defined by
\begin{align*}
	  & \nu_\s(E_i(z))=E_{-i}(z),\quad  \nu_\s(F_i(z))=F_{-i}(z),\quad  \nu_\s(K_i^\pm(z))=-K_{-i}^\pm(z), \quad \nu_\s(C)=C\quad & (i\in \hat{I}). 
\end{align*}
Note that it changes $q$ to $q^{-1}$. 
	
\medskip

For $u\in \mathbb{C}^\times$, the {\it shift of spectral parameter} $\gamma_{u,\s}:\Es\rightarrow \Es$	is an isomorphism defined by
\begin{align*}
	  & \gamma_{u,\s}(C)=C, &   & \gamma_{u,\s}(A_i(z))=A_i(uz) &   & (i\in \hat{I},\, A=K^\pm,E,F). 
\end{align*}

\medskip

For $\s\in \Smn$, there exists an isomorphism  $\hTa_{\s}:\Es \rightarrow \E_{\tau \s}$ given by 
\begin{align}{\label{htau}}
	  & \hTa_{\s}(C)=C, &   & \hTa_{\s}(A_{i} (z))=A_{i+1} (-d^{-\s_{N}}z) &   & (i\in \hat{I},\, A=K^\pm,E,F). 
\end{align}

In the notation \eqref{X not}, the map $\hTa_\s$ takes the form
\begin{align*}
	  & \hTa_{\s}(C)=C, &   & \hTa_{\s}(A_{i} (z))=A_{i+1} (-d^{(n-m)\delta_{i,N-1}}z) &   & (i\in \hat{I},\, A=\tilde K^\pm,X^+,X^-). 
\end{align*}

\begin{prop}{\label{htauprop}} 
	The isomorphisms $\hTa_{\s}$,  $\s\in \Smn$, satisfy
	\begin{align}
		&(\hTa\,h)_\s=(h\, \tau)_\s, \label{tauh}\\
		  & (\hTa\,v)_\s(a_{i} (z))=v_{\tau \s}(a_{i+1} (-z))                        &   & (i\in I\setminus \{N-1\},\, a=k^\pm,x^\pm).\label{tauv}     \\
		  & (\hTa\,v\,T_{i})_\s\, (a_{j}(z))=(v\,T_{i+1})_{\tau \s}\, (a_{j+1} (-z)) &   & (i,j\in I\setminus \{N-1\},\, a=k^\pm,x^\pm).\label{tau Tv} 
	\end{align}
	The maps $\hTa_{\s}$ preserve the homogeneous grading and $\deg_i(\hTa_\s(X))=\deg_{i-1}(X)$, $i\in \hat I$.
	        
	\begin{proof}
		A straightforward computation shows that $\hTa_{\s}$ preserve the  homogeneous grading, satisfy equality \eqref{tauh}, and  $\deg_i(\hTa_\s(X))=\deg_{i-1}(X)$, $i\in \hat I$, if $X\in \Es$ is homogeneous.
		        
		We check  $(\hTa\,v)_\s(x^+_{i}(z))=v_{\tau \s}(x^+_{i+1}(-z))$ for $1\leq i\leq N-2$. 
		        
		By definition, we have
		\begin{align*}
			  & (\hTa\,v)_\s(x^+_{i}(z))=\hTa_{\s}(E_i(d^{\mu_\s(i)}z))=E_{i+1}(-d^{\mu_\s(i)-s_{N}}z), \\
			  & v_{\tau \s}(x^+_{i+1}(-z))=E_{i+1}(-d^{\mu_{\tau\s}(i+1)}z).                             
		\end{align*}
		But $\tau\s=(s_N,s_1,\dots,s_{N-1})$, thus
		\begin{align*}
			\mu_{\tau\s}(i+1)=-s_N-s_1-\cdots -s_i= \mu_{\s}(i)-s_N. 
		\end{align*}
		        
		The proofs for $x^-_{i}(z)$ and $k^\pm_i(z)$ are analogous.
		        
		Equation \eqref{tau Tv} for $i\neq j$ follows from Lemma \ref{T cur} and equation \eqref{tauv}.
		        
		To show \eqref{tau Tv} with $i=j$, set  $l=i-1$ if $i\neq 1$, and $l=2$ if $i=1$. In particular, $A^\s_{l,i}\neq 0$.
		Then, since $l\neq i$, we have
		$$(\hTa\,v\,T_{i})_\s\, (h_{l,\pm 1})=-(v\,T_{i+1})_{\tau \s}\, (h_{l+1,\pm1}).$$
		        
		Also, by a direct computation, we have 
		$$(\hTa\,v\,T_{i})_\s\, (x^\pm_{i,0})=(v\,T_{i+1})_{\tau \s}\, (x^\pm_{i+1,0}).$$
		Therefore, the constant terms of left hand side and right hand side of \eqref{tau Tv} coincide. The equality of other terms follow from the commutator
		$$[h_{l,\pm1},x^+_i(z)]={[A^{\s}_{l,i}]}c^{-(1\pm 1)/2}z^{\pm 1}x^\pm_i(z)$$
		and a similar one for $x^-_{i}(z)$.
	\end{proof}
\end{prop}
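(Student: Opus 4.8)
The plan is to prove Proposition \ref{htauprop} by reducing everything to explicit computations on the generating currents, exploiting the fact that $\hTa_\s$ is given by a completely explicit formula \eqref{htau}. The statement has four assertions: that $\hTa_\s$ preserves the homogeneous grading with the index shift $\deg_i(\hTa_\s(X))=\deg_{i-1}(X)$; the relation \eqref{tauh} with the horizontal homomorphism; the relation \eqref{tauv} with the vertical homomorphism; and finally \eqref{tau Tv} involving the braid group action. The first three are direct substitutions, so the main work — and the main obstacle — lies in \eqref{tau Tv}, where $T_i$ is \emph{not} given by explicit formulas on current generators.

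First I would dispose of the grading statement: since $\hTa_\s(A_i(z))=A_{i+1}(-d^{\pm}z)$ with $A=K^\pm,E,F$, the degree indices $\deg_j$ simply track $E$ versus $F$ at node $i$, and the formula sends node $i$ to node $i+1$, giving $\deg_i(\hTa_\s(X))=\deg_{i-1}(X)$ directly; the rescaling $z\mapsto -d^{\cdots}z$ does not affect $\deg_\delta$, so the homogeneous grading is preserved. For \eqref{tauh}, I evaluate $\hTa_\s$ on $E_{i,0}=h_\s(e_i)$ and read off the constant term of $E_{i+1}(-d^\cdots z)$, which is $E_{i+1,0}=h_{\tau\s}(e_{i+1})=h_{\tau\s}(\tau_\s(e_i))$; matching $K_i\mapsto K_{i+1}$ similarly yields $(\hTa\,h)_\s=(h\,\tau)_\s$. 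For \eqref{tauv}, I substitute the definition $v_\s(x^+_i(z))=E_i(d^{\mu_\s(i)}z)$ and compute both sides as series in $E_{i+1}$; the key bookkeeping is the identity $\mu_{\tau\s}(i+1)=\mu_\s(i)-s_N$, which follows since $\tau\s=(s_N,s_1,\dots,s_{N-1})$ shifts the partial-sum index, and the extra factor $-d^{-s_N}$ from \eqref{htau} exactly accounts for the argument $-z$ versus the shift in $\mu$. The cases $x^-_i$ and $k^\pm_i$ are identical after adjusting signs in $\mu_\s(i)$, as indicated.

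The crux is \eqref{tau Tv}. The strategy is to split into $i\neq j$ and $i=j$. When $i\neq j$, Lemma \ref{T cur} gives $T_{i,\s}(a_{j,r})=a_{j,r}$ for $j\neq i,i\pm1$ (and the explicit bracket formulas \eqref{Tfix2}--\eqref{Tfix5} for $j=i\pm1$), so $T_i$ acts by known expressions in the current generators; applying \eqref{tauv} termwise then closes the case, since $\hTa$ and $v$ commute past these explicit brackets with the correct index shift. The genuinely hard case is $i=j$, where $T_{i,\s}$ acting on $U^{\{i\}}_q\slhs$ is not explicit. The idea is to avoid computing $T_i$ on all of $x^\pm_{i,r}$ and instead to verify the identity on a \emph{generating set} of $U^{\{i\}}_q\slhs$ and then propagate. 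Concretely I would choose the auxiliary index $l=i-1$ (or $l=2$ when $i=1$) so that $A^\s_{l,i}\neq 0$; since $l\neq i$, the already-established $i\neq j$ case applies to the Cartan current $h_{l,\pm1}$, giving $(\hTa\,v\,T_i)_\s(h_{l,\pm1})=-(v\,T_{i+1})_{\tau\s}(h_{l+1,\pm1})$. Separately, a direct computation handles the constant term $x^\pm_{i,0}$, establishing $(\hTa\,v\,T_i)_\s(x^\pm_{i,0})=(v\,T_{i+1})_{\tau\s}(x^\pm_{i+1,0})$. Finally I would recover all higher modes $x^\pm_{i,r}$ by repeatedly taking brackets with $h_{l,\pm1}$, using the commutation relation $[h_{l,\pm1},x^+_i(z)]=[A^\s_{l,i}]\,c^{-(1\pm1)/2}z^{\pm1}x^+_i(z)$ (and its $x^-$ analogue): since both $\hTa\,v\,T_i$ and $v\,T_{i+1}$ are algebra homomorphisms intertwining these brackets, agreement on the constant term $x^\pm_{i,0}$ together with agreement on $h_{l,\pm1}$ forces agreement on every mode. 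The main obstacle is precisely this last propagation argument — ensuring that brackets with $h_{l,\pm1}$ genuinely generate all the modes $x^\pm_{i,r}$ and that the scalar factors $[A^\s_{l,i}]$ and the central $c$-corrections match on both sides after the index shift $i\mapsto i+1$; everything else is routine substitution into explicit formulas.
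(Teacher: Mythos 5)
Your proposal is correct and follows essentially the same route as the paper's own proof: direct substitution for the grading, \eqref{tauh} and \eqref{tauv} via the identity $\mu_{\tau\s}(i+1)=\mu_\s(i)-s_N$, the $i\neq j$ case of \eqref{tau Tv} from Lemma \ref{T cur}, and the $i=j$ case by the same choice of auxiliary node $l$ with $A^\s_{l,i}\neq 0$, agreement on $h_{l,\pm1}$ and on $x^\pm_{i,0}$, and propagation to all modes via the commutator $[h_{l,\pm1},x^\pm_i(z)]$. No substantive differences to report.
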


\medskip

The homomorphisms $v_\s, h_\s$ and $\hTa_\s$ previously defined correspond to the algebra $\Es(q_1,q_2,q_3)$.
Let $v'_\s, h'_\s$ and $\hTa'_\s$ be the analogous homomorphisms corresponding to the algebra $\Es(q_3,q_2,q_1)$, i.e., the parameter $d$ is switched to $d^{-1}$. 

The map $\eta_\s$ defined in Lemma \ref{etalem} has the following toroidal counterpart.

For $\s\in \Smn$, there exists an anti-isomorphism  $\hat\eta_\s: \Es(q_1,q_2,q_3)\rightarrow \Es(q_3,q_2,q_1)$ given by
\begin{align*}
	  & \hat\eta_\s(C)= C, &   & \hat\eta_\s(K_i^{\pm}(z))= K_i^{\mp}(Cz^{-1}), &   & \hat\eta_\s(E_i(z))= E_i(z^{-1}), &   & \hat\eta_\s(F_i(z))=F_i(z^{-1}) &   & (i\in \hat{I}). 
\end{align*}
The anti-isomorphism $ \hat\eta_\s$ preserves $\deg_i$, $i\in \hat{I}$, and $\deg_\delta( \hat\eta_\s(X))=-\deg_\delta( X)$ if $X\in \Es$ is homogeneous.

\begin{lem}{\label{heta}}
	The anti-isomorphisms $\hat\eta_\s$, $\s\in \Smn$, satisfy
	\begin{align}{\label{heta eq}}
		  & (\hat\eta\, v)_\s=(v'\, \eta)_\s, &   & (\hat\eta\, h)_\s=(h'\, \varphi)_\s, &   & (\hat\eta\, \hTa)_{\s}=(\hTa'\, \hat\eta)_\s. 
	\end{align}
	\begin{proof}
		The equality $(\hat\eta\, h)_\s=(h'\, \varphi)_\s$ is clear.
		
		We check, for example, $(\hat\eta\, v)_\s=(v'\, \eta)_\s$ on $x^+_i(z)$ and $(\hat\eta\, \hTa)_{\s}=(\hTa'\, \hat\eta)_\s$ on $E_{i,r}$. The other cases are analogous. Note that $\hat\eta_\s$ interchanges the parameters $q_1$ and $q_3$, i.e., $d$ and $d^{-1}$ are interchanged. Thus,
		\begin{align*}
			  & (\hat\eta\, v)_\s(x^+_{i,r})= \hat\eta_\s(d^{-r\mu_\s (i)} E_{i,r} )=d^{-r\mu_\s (i)} E_{i,-r}=v'_\s(x^+_{i,-r})=(v'\, \eta)_\s(x^+_{i,r}),                  \\
			  & (\hat\eta\, \hTa)_{\s}(E_{i,r})=\hat\eta_{\tau \s}(-d^{r\,s_N}E_{i+1,r})=-d^{r\,s_N}E_{i+1,-r}=\hTa'_{\s}(E_{i,-r})=(\hTa'\, \hat\eta)_\s(E_{i,r}). 
		\end{align*}
		
	\end{proof}
\end{lem}

\medskip

The maps $\cX_{i,\s}$ defined in Proposition \ref{X action} also have toroidal analogs as follows.

There exist automorphisms of superalgebras $\hX_{i,\s}:\Es \rightarrow \Es$, $i\in \hat{I}$, $\s\in \Smn$, given by 
\begin{align}{\label{X hat}}
	  & \hX_{i,\s}(C)=C, &   & \hX_{i,\s}(X^\pm_j(z))=((-1)^i z^{\mp 1})^{\delta_{ij}}X^\pm_j(z), &   & \hX_{i,\s}(K^\pm_{j}(z))=C^{\mp \delta_{ij}}K^\pm_{j}(z) &   & (j\in \hat{I}). 
\end{align}
The automorphism $ \hX_{i,\s}$ preserves $\deg_j$, $j\in \hat{I}$, and $\deg_\delta( \hX_{i,\s}(X))=\deg_\delta( X)-\deg_i(X)$ if $X\in \Es$ is homogeneous. Let also $\hX_{i,\s}'$ be the analogous automorphism corresponding to the algebra $\Es(q_3,q_2,q_1)$.

Let  $\zeta_\s:\Es \rightarrow \Es$, $\s\in \Smn$, be the rescaling automorphism  given by 
\begin{align*}
	  & \zeta_\s(C)=C, &   & \zeta_\s(X^\pm_i(z))=((-1)^N d^{n-m})^{\pm \delta_{i,0}}X^\pm_i(z), &   & \zeta_\s(K^\pm_i(z))=K^\pm_i(z) &   & (i\in \hat{I}). 
\end{align*}

\begin{prop}{\label{hX}} 
	The automorphisms $\hX_{i,\s},\, \zeta_\s$ satisfy
	\begin{align}
		  & (\hat\eta'\, \hX_{i}'\, \hat\eta)_\s=\hX_{i,\s}^{-1}\,, &   & (\hX_{j}\,v)_\s=(v\,\cX_{j})_\s                  &   &   & (i\in \hat{I},\, j\in I), \label{Xv} \\
		  & (\hTa\, \hX_{j-1})_\s=(\hX_{j}\, \hTa)_\s \,,         &   & (\hTa\, \hX_{N-1})_\s=(\zeta\,\hX_{0}\, \hTa)_\s &   &   & (j\in I). \label{Xtau}               
	\end{align}
	\begin{proof}
		Identities \eqref{Xv} and the first equality of \eqref{Xtau} are clear.
		
		We check $(\hTa\, \hX_{N-1})_\s=(\zeta\,\hX_{0}\, \hTa)_\s$ applied to $X_{N-1,r}^\pm$:
		\begin{align*}
			  & (\hTa\, \hX_{N-1})_\s(X_{N-1,r}^\pm)=\hTa_\s((-1)^{N-1} X^\pm_{N-1,r\mp 1})=(-1)^{N+r}d^{(r\pm1)(m-n)} X^\pm_{0,r\mp 1},                                                            \\
			  & (\zeta \hX_{0}\, \hTa)_\s(X^\pm_{N-1,r})=(\zeta\hX_{0})_\s ((-1)^r d^{r(m-n)}X^\pm_{0,r})=\zeta_\s ((-1)^r d^{r(m-n)}X^\pm_{0,r\mp 1})=(-1)^{N+r}d^{(r\pm1)(m-n)} X^\pm_{0,r\mp 1}. 
		\end{align*}
		The check on the remaining generators is similar. 
	\end{proof}
\end{prop}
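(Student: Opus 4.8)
The plan is to verify the two functional identities in \eqref{Xv} and \eqref{Xtau} directly on generators, exploiting the fact that all four maps $\hX_{i,\s}$, $\zeta_\s$, $\hTa_\s$ and $v_\s$ are defined by explicit formulas on the generating series $X^\pm_i(z)$, $\tilde K^\pm_i(z)$, $C$. Since each composite is an algebra (or anti-algebra) map, it suffices to check equality on these generators; the relations among generators are automatically respected because each constituent is a well-defined homomorphism. I would first dispose of the identities marked "clear": for $(\hX_j v)_\s=(v\,\cX_j)_\s$ with $j\in I$, I would apply both sides to $x^\pm_{i,r}$, $k_i$, $c$ and compare, using the explicit action of $\cX_{j,\s}$ from Proposition \ref{X action} on the right and the shift $X^\pm_i(z)\mapsto ((-1)^i z^{\mp1})^{\delta_{ij}}X^\pm_i(z)$ from \eqref{X hat} on the left, matching the spectral shift $z\mapsto z$ against the index shift $r\mapsto r\mp\delta_{ij}$ together with the sign $(-1)^{i\delta_{ij}}$. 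For the first equality of \eqref{Xtau}, $(\hTa\,\hX_{j-1})_\s=(\hX_j\,\hTa)_\s$ with $j\in I$, both sides shift the Dynkin index by $+1$ (via $\hTa$) and pick up the $\hX$ spectral shift at exactly one node; since $j\neq N-1$ these shifts do not interfere with the special $-d^{\pm(n-m)}$ factor that $\hTa$ carries only at node $N-1$, so the comparison is a clean bookkeeping of indices, spectral parameters and the signs $(-1)^{j-1}$ versus $(-1)^j$.

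The genuine content is the second equality of \eqref{Xtau}, $(\hTa\,\hX_{N-1})_\s=(\zeta\,\hX_0\,\hTa)_\s$, which is why the rescaling automorphism $\zeta_\s$ was introduced: here the node-shifting of $\hTa$ takes the distinguished node $N-1$ to node $0$, and the special factor $-d^{(n-m)\delta_{i,N-1}}$ in the definition \eqref{htau} of $\hTa$ collides with the $\hX$ spectral shift, producing an extra scalar that $\zeta_\s$ is designed to absorb. I would carry out the comparison on $X^\pm_{N-1,r}$ exactly as in the displayed computation in the proof above: on the left, $\hX_{N-1}$ first sends $X^\pm_{N-1,r}$ to $(-1)^{N-1}X^\pm_{N-1,r\mp1}$, after which $\hTa$ shifts the index to $0$ and multiplies by the $-d^{(n-m)}$ factor raised to the appropriate power; on the right, $\hTa$ first sends $X^\pm_{N-1,r}$ to $(-1)^r d^{r(m-n)}X^\pm_{0,r}$ (the special factor now active because $N-1$ is the distinguished node), then $\hX_0$ shifts $r\mapsto r\mp1$, and finally $\zeta_\s$ contributes the factor $((-1)^N d^{n-m})^{\pm1}$ at node $0$. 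The main obstacle, and the only place where care is required, is to confirm that these two accumulations of signs and powers of $d$ agree: one must check that $(-1)^{N+r}d^{(r\pm1)(m-n)}$ emerges identically from both sides, tracking how the mode shift $r\mapsto r\mp1$ interacts with the exponent $r(m-n)$ and how the factor of $\zeta_\s$ exactly cancels the discrepancy. I would then note that the verification on $K^\pm_{N-1}(z)$ and on the remaining nodes $j\neq N-1$ (where both composites act identically and no $\zeta$-correction is needed) is routine and omit it, which matches the phrase "The check on the remaining generators is similar."
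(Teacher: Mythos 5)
Your proposal is correct and follows essentially the same route as the paper: the first three identities are dispatched as direct checks on generators, and the substantive verification is the computation of $(\hTa\,\hX_{N-1})_\s$ and $(\zeta\,\hX_0\,\hTa)_\s$ on $X^\pm_{N-1,r}$, tracking the sign $(-1)^{N+r}$ and the power of $d$ that $\zeta_\s$ absorbs, exactly as in the paper's displayed calculation.
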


\section{Toroidal braid group}{\label{tor braid sec}}
We construct an action of the toroidal braid group $\widehat\B_N$
associated with $\mathfrak{sl}_{m|n}$ on $\E_\bullet=\bigoplus_{\s\in \Smn} \Es$.
As a consequence, we show that the algebras $\Es$, $\s \in \Smn$, are all isomorphic. It also gives us the {\it Miki automorphism} of $\E_\s$, which interchanges the horizontal and vertical subalgebras.

In this section, we assume $N\geq 4$.

\subsection{Action of \texorpdfstring{$\B_N$}{BN} on \texorpdfstring{$\E_\bullet$}{E.}} 
We start with extending the action of affine braid group $\B_N$ from the vertical subalgebra $U_q^{ver}\slh_\bullet$ given in Proposition \ref{ybraid} to the toroidal algebra $\E_\bullet$.

Note that the map $\hTa_\s$ was already defined in \eqref{htau}.
We also recall that the prime indicates the action of the operator with $q_3$ and $q_1$ switched, see Section \ref{sec mor}. The following theorem is a supersymmetric analog of \cite[Proposition 1]{M}.

\begin{thm}{\label{Tbraid}} Let $N>3$. For $i\in \hat{I}$, $\s\in \S_{m|n}$, there exists an isomorphism of superalgebras $\hT_{i,\s}:\Es\rightarrow \E_{\sigma_{i} \s}$ satisfying 
	\begin{align}
		  & (\hT_{i\,}v)_{\s}=(v T_{i})_\s                       & (i\in I),\label{P1}          \\
		  & (\hT_{i}\,h)_{\s}=(h\, T_{i})_\s                     & (i\in \hat{I}),\label{P2}    \\
		  & (\hTa\, \hT_{i})_\s=(\hT_{i+1}\hTa)_{\s}             & (i\in \hat{I}),\label{P3}    \\
		  & (\hat\eta'\, \hT_{i}'\, \hat\eta)_\s=(\hT_{i}^{-1})_\s & (i\in \hat{I}).\label{hetaT} 
	\end{align}
	
	Moreover, the isomorphisms $\hT_{i,\s}$ satisfy the Coxeter relations 
	\begin{align}
		  & (\hT_{i+ 1}\hT_{i}\hT_{i+ 1})_\s=(\hT_{i}\hT_{i+ 1}\hT_{i})_\s & (i\in \hat{I}),\label{Cox1} \\
		  & (\hT_{i}\hT_{j})_\s=(\hT_{j}\hT_{i})_\s                        & (i\neq j\pm 1).\label{Cox2} 
	\end{align}
	Finally,  $\hT_{i,\s}$ are graded with respect to homogeneous grading.
	\begin{proof}
		Throughout this proof we write similar formulas for $X_i^\pm (z)$ and $\tilde K_i^\pm(z)$. To avoid repeating the same formula four times, we use the letters  $A_i(z)$  to denote $X^\pm_i(z)$  or $\tilde{K}^\pm_i(z)$, and $a_i(z)$ to denote $x^\pm_i(z)$ or $k^\pm_i(z)$ . Note that $A_i(z)$ and $a_i(z)$ in the same formula are all of the same kind - e.g. all $X_i^+(z)$ and $x_i^+(z)$.
		
		Define the map $\hT_{1,\s}$ on generators of $\Es$ by
		$$\hT_{1,\s}(A_0(z))=(\hTa^{-1}v T_{2})_{\tau \s}\, (a_1(-z)),\qquad \hT_{1,\s}(A_i(z))=(vT_{1})_\s\, (a_i(z)) \qquad(i\in I).$$ 
		Note that $\hT_{1,\s}(A_i(z))=A_i(z)$ if $i=3,\dots,N-1$. Moreover, the action of $\hT_{1,\s}$ on $A_0(z), A_2(z), A_{1,0}$ is explicit by Lemma \ref{T cur}. The map $\hT_{1,\s}$ respects homogeneous grading because $T_{1,\s}$ and $T_{2,\s}$ do.
		
		We claim that this extends to an isomorphism of superalgebras. In fact all relations which do not involve node $1$ (that is the ones which do not contain $E_1(z), F_1(z),K^\pm_1(z)$) can be checked by a direct computation. To check the relations which do involve node $1$ and to reduce the calculations in other cases we can use the following arguments.
		
		The relations not involving node $0$ are satisfied since we can compute in the vertical algebra and $T_{1,\s}$ is a homomorphism. 
		
		To check the relations involving node 0, note that by Proposition \ref{htauprop} we have
		\begin{align*}
			  & \hTa_\s (A_{N-1}(z))=A_{0}(-d^{n-m}z),\qquad \hTa_\s (A_{i}(z))=A_{i+1}(-z)                                          & (i\neq N-1),      \\
			  & \hT_{1,\s}(A_i(z))=(\hTa^{1-k} v T_{k})_{\tau^{k-1}\s}(a_{k+i-1}((-1)^{1-k}z))                                       & (k, k+i-1 \in I), \\
			  & \hT_{1,\s}(A_{N-1}(d^{m-n}z))=(\hTa^{-2} v T_{3})_{\tau^{2}\s}(a_{1}(z))=(\hTa^{-3} v T_{4})_{\tau^{3}\s}(a_{2}(-z)) & (N>4),            \\
			  & \hT_{1,\s}(A_{N-2}(d^{m-n}z))=(\hTa^{-3} v T_{4})_{\tau^{3}\s}(a_{2}(-z))                                            & (N>4).            
		\end{align*}

		Let us first assume that $N>4$.
		
		We prove the relations involving the nodes $0,1$, and $2$ by moving these nodes to $1,2$ and $3$ using $\tau$ and $\hTa$. Namely, by \eqref{eab3} and \eqref{tauv}, we have
		\begin{align*}
			  & \hT_{1,\s}(A_i(z))=(\hTa^{-1} v T_{2})_{\tau\s}(a_{i+1}(-z)) &   & (i=0,1,2). 
		\end{align*}
		Observe that the defining relations between generators of $\E_\s$ involving nodes $0,1,2$ are the same  as the defining relations in $\U_q\slh_{\tau\s}$ involving nodes $1,2,3$. Since $(\hTa^{-1} v T_{2})_{\tau\s}$ is a homomorphism, it maps the the relations of $\U_q\slh_{\tau\s}$ involving the nodes $1,2,3$  to zero. Therefore $\hT_{1,\s}$ maps defining relations of $\E_\s$ involving nodes $0,1,2$ to zero.
		
		\medskip 
		
		The relations involving the nodes $N-1,0,1$  are treated similarly: we go to nodes $1,2,3$ again by using
		\begin{align*}
			  & \hT_{1,\s}(A_i(z))=(\hTa^{-2} v T_{3})_{\tau^{2}\s}(a_{i+2}(z)) &   & (i=0,1), \\
			&\hT_{1,\s}(A_{N-1}(d^{m-n}z))=(\hTa^{-2} v T_{3})_{\tau^{2}\s}(a_{1}(z)).
		\end{align*}
		Note that the defining relations between generators of $\E_\s$ involving nodes $N-1,0,1$ are the same  as the defining relations in $\U_q\slh_{\,\tau^2\s}$ involving nodes $1,2,3$ if the shift of spectral parameter in the generating series related to node $N-1$ is taken into account. Therefore as before, $\hT_{1,\s}$ maps defining relations of $\Es$ involving nodes $N-1,0,1$ to zero.
		
		For the relations involving the nodes $N-2,N-1,0$ we proceed in the same way by using
		\begin{align*}
			  & \hT_{1,\s}(A_{N-i}(d^{m-n}z))=(\hTa^{-3} v T_{4})_{\tau^{3}\s}(a_{3-i}(-z)) &   & (i=1,2), \\
			&\hT_{1,\s}(A_{0}(z))=(\hTa^{-3} v T_{4})_{\tau^{3}\s}(a_{3}(-z)).
		\end{align*}
		and reducing to nodes $1,2,3$ once again. We omit further details. Thus for $N>4$ all relations follow without extra computations.
		
		If $N=4$, the previous argument applies for the relations involving the nodes $0,1,2$, or $-1,0,1$, or $0,1$ or the nodes involving $2,0$.  The additional relations $\eqref{Serre3}$ and $\eqref{Serre4}$ for $i=3$ in the case $A^\s_{3,3}=0$ are checked directly. We check $\eqref{Serre3}$ with $i=3$ as an example.
		
		First, by identity \eqref{q der} and relation $\eqref{Serre3}$ we have 
		\begin{align*}
			0 & =\Sym_{{z_1,z_2}}\lb{\lb{\lb{E_3(z_1),\lb{E_{0}(w_1),\lb{E_3(z_2),E_{2}(w_2)}}},E_{1,0}},E_{1,0}}            \\
			  & =(q+q^{-1})\Sym_{{z_1,z_2}}\lb{\lb{E_3(z_1),\lb{E_{0}(w_1),E_{1,0}}},\lb{E_3(z_2),\lb{E_{2}(w_2),E_{1,0}}}}. 
		\end{align*}
		Using Lemma \ref{T cur} we can compute the action of $\hT_{1,\s}$ on $\eqref{Serre3}$ explicitly. Note that $A^\s_{3,3}=0$, $N=4$, $m\neq n$ imply $mn=3$ and $|1|=0$. We have
		\begin{align*}
			  & \hT_{1,\s}(\Sym_{{z_1,z_2}}\lb{E_3(z_1),\lb{E_{0}(w_1),\lb{E_3(z_2),E_{2}(w_2)}}})                                  \\
			  & =\hT_{1,\s}(\Sym_{{z_1,z_2}}\lb{\lb{E_3(z_1),E_{0}(w_1)},\lb{E_3(z_2),E_{2}(w_2)}})                                 \\
			  & =q^{-A^\s_{1,1}}\Sym_{{z_1,z_2}}\lb{\lb{E_3(z_1),\lb{E_{0}(w_1),E_{1,0}}},\lb{E_3(z_2),\lb{E_{2}(w_2),E_{1,0}}}}=0. 
		\end{align*}
		
		This shows that $\hT_{1,\s}$ is a homomorphism. 
		
		\medskip
		
		For $i\in \hat{I}$ define
		\begin{align}{\label{Ti def}}
			\hT_{i,\s}=(\hTa^{\,i-1}\, \hT_{1}\, \hTa^{\,1-i})_\s. 
		\end{align}
		Since $\hT_{1,\s}$ and $\hTa_\s$ are homomorphisms for all $\s\in \Smn$, the maps $\hT_{i,\s}$ are well defined homomorphisms for all $i\in \hat{I}$ and $\s \in \Smn$.
		
		Note that  
		\begin{align}{\label{fixT}}
			  & \hT_{i,\s}(A_j(z))=A_j(z) & (j\neq i,i\pm 1). 
		\end{align} Also, $\hT_{i,\s}(A_j(z))$ is explicit if $j= i\pm1 $ and so is $\hT_{i,\s}(A_{i,0})$.
		
		Now, we show that the homomorphisms $\hT_{i,\s}$ satisfy equations \eqref{P1} using induction on $i$.  For $i=1$ the statement follows from definition of $\hT_1$. Suppose \eqref{P1} is true for $i=j\leq N-2$. Let us prove it for $i=j+1$.
		
		If $l \in I$, $l\neq 1$, we have
		\begin{align*}
			(\hT_{j+1} v)_\s(a_{l}(z)) & =(\hTa\, \hT_{j}\, \hTa^{-1} v)_{\s} (a_l(z))=(\hTa\, \hT_{j} v)_{\tau^{-1}\s} (a_{l-1}(-z)) \\
			                           & =(\hTa\, v\, T_j)_{\tau^{-1}\s} (a_{l-1}(-z))=(v\, T_{j+1})_{ \s} (a_{l}(z)).                
		\end{align*}
		Here the second equality is \eqref{tauv}, the third equality is the induction hypothesis, and the last equality is \eqref{tau Tv}.
		
		If $l=1$ then
		\begin{align*}
			  & (\hT_{j+1}\, v)_\s(a_1(z))=(\hTa \, \hT_{j}\, \hTa^{-1})_{\s} (A_1(z))=(\hTa\, \hT_j)_{\tau^{-1}\s} (A_{0}(-z))=(v T_{j+1})_{ \s} (a_{1}(z)). 
		\end{align*}
		Here the last equation is a definition if $j=1$ and a trivial statement if $j>1$ since in that case $\hT_{j,\s} (A_0(z))=A_0(z)$ and $T_{j+1,\s}(a_1(z))=a_1(z).$
		
		Thus, $\hT_{i,\s}$ satisfy equation \eqref{P1}. 
		
		\medskip
		
		Next we show \eqref{hetaT} for $i=1$.
		By \eqref{P1}, Lemmas \ref{etalem} and \ref{heta}, we have
		\begin{align*}
			  & (\hT_{1}\hat\eta' \hT_{1}' \hat\eta)_\s(A_i(z))=(v\, T_1 \,\eta\, T_{1}\, \eta)_\s (a_i(z))= v_\s(a_i(z))=A_i(z) &   & (i\in I), \\
			&(\hT_{1}\hat\eta' \hT_{1}' \hat\eta)_\s(A_0(z))=(\hTa ^{-1}v\, T_2 \,\eta\, T_{2}\, \eta)_{\tau\s} (a_1(-z))= (\hTa^{-1}\,v)_{\tau\s}(a_1(-z))=A_0(z).
		\end{align*}
		Thus, equation \eqref{hetaT} holds for $i=1$. In particular, it implies that $\hT_{1,\s}$ has an inverse and therefore is an isomorphism. 
		
		By Lemma \ref{heta}, $\hat\eta_\s$ commutes with $\hTa_\s$. It implies \eqref{hetaT} for all $i\in \hat I$. In particular, $\hT_{i,\s}$ is isomorphism for all $i\in\hat I$.
		
		\medskip
		
		By \eqref{tauh}, the isomorphisms $\hT_{i,\s}$ satisfy equation \eqref{P2}.
		
		\medskip
		
		Finally, we show Coxeter relations \eqref{Cox1} and \eqref{Cox2}.
		By equation \eqref{P3}, it is sufficient to show these relations when $i=1$ and $j\neq 0$. By Proposition \ref{braidaff}, Coxeter relations are satisfied by the homomorphisms $T_{i,\s}$. Then by \eqref{P1}, relations
		\eqref{Cox1} and \eqref{Cox2} are satisfied on the image of $v_\s$. By \eqref{P2}, these relations are also satisfied on the image of $h_\s$. Since horizontal and vertical subalgebras generate the whole algebra $\E_\s$, we obtain the proof  of \eqref{Cox1} and \eqref{Cox2} 
		
	\end{proof}
\end{thm}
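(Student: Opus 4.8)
The plan is to define $\hT_{1,\s}$ directly on the generators of $\Es$, prove it is a homomorphism by reducing every relation check to the vertical subalgebra (where \cite{Y} applies), obtain the remaining $\hT_{i,\s}$ by conjugating with the rotation $\hTa$, and then deduce the structural properties \eqref{P1}--\eqref{hetaT} and the Coxeter relations one at a time. Concretely, I would first set $\hT_{1,\s}(A_i(z))=(v T_{1})_\s(a_i(z))$ for $i\in I$ and $\hT_{1,\s}(A_0(z))=(\hTa^{-1}v T_{2})_{\tau\s}(a_1(-z))$, where $A,a$ range uniformly over $X^\pm,\tilde{K}^\pm$ and $x^\pm,k^\pm$. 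The formula on the interior nodes is forced by \eqref{P1}, i.e.\ by the requirement that $\hT_1$ agree with $vT_1$ on the vertical subalgebra, while the node-$0$ formula is the ``rotated'' version making the construction compatible with \eqref{P3}. By Lemma \ref{T cur} these images are explicit on the generators that actually move, and the map respects the homogeneous grading because $T_{1,\s}$, $T_{2,\s}$, $\hTa$ and $v$ all do.

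The heart of the argument is showing that $\hT_{1,\s}$ respects the defining relations \eqref{relCK}--\eqref{Serre6}. Relations not involving node $1$ are checked directly, while relations not involving node $0$ reduce to the statement that $T_{1,\s}$ is an algebra homomorphism on the vertical subalgebra, which is Proposition \ref{ybraid}. The genuinely new content is the relations coupling node $0$ to nodes $1,2,N-1,N-2$, where the map is not explicit. The key device is to use the intertwiners \eqref{tauv}, \eqref{tau Tv} together with the covariance $\hTa_\s T_{i-1}=T_i\hTa_\s$ to rewrite $\hT_{1,\s}$, on any consecutive triple of nodes containing $0$, in the form $(\hTa^{-k}v T_{k+1})_{\tau^k\s}$ applied to the \emph{interior} triple $1,2,3$. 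Since the defining relations among consecutive nodes are rotation covariant, up to the explicit spectral shift recorded in \eqref{htau}, and since $(\hTa^{-k}v T_{k+1})_{\tau^k\s}$ is a composition of homomorphisms, the corresponding relations of $\Es$ are sent to zero. This reduction works uniformly for $N>4$; for $N=4$ the triples overlap, and one must additionally verify the quartic Serre relations \eqref{Serre3}, \eqref{Serre4} at the wrapped node by a direct computation using the $q$-Jacobi identity \eqref{q der}.

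With $\hT_{1,\s}$ established as a homomorphism, I would set $\hT_{i,\s}=(\hTa^{\,i-1}\,\hT_{1}\,\hTa^{\,1-i})_\s$ for all $i\in\hat I$; these are homomorphisms satisfying \eqref{P3} by construction and preserving the homogeneous grading. Property \eqref{P1} then follows by induction on $i$, the inductive step commuting $v$ past $\hTa$ via \eqref{tauv} and \eqref{tau Tv}. For invertibility I would prove \eqref{hetaT} at $i=1$ by evaluating $(\hT_{1}\hat\eta'\hT_{1}'\hat\eta)_\s$ on each generator and collapsing it to the identity using Lemmas \ref{etalem} and \ref{heta}; since $\hat\eta_\s$ commutes with $\hTa_\s$ (Lemma \ref{heta}), \eqref{hetaT} then propagates to all $i$, so every $\hT_{i,\s}$ is an isomorphism. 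Property \eqref{P2} follows from \eqref{tauh}. Finally, for the Coxeter relations \eqref{Cox1}, \eqref{Cox2} it suffices by \eqref{P3} to treat $i=1$, $j\neq 0$: these hold on the image of $v_\s$ by \eqref{P1} and on the image of $h_\s$ by \eqref{P2}, both because the $T_{i,\s}$ already satisfy them (Proposition \ref{braidaff}), and since the vertical and horizontal subalgebras generate $\Es$, the relations hold everywhere.

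The main obstacle is precisely the homomorphism check for $\hT_{1,\s}$ on the relations wrapping around node $0$, where the map is non-explicit. The entire design — the choice of the node-$0$ formula, the rotation covariance of $\hTa$, and the interplay \eqref{tauv}, \eqref{tau Tv} — exists to convert these into vertical-subalgebra computations governed by \cite{Y}; the $N=4$ case is the single place where this reduction degenerates and a genuine hand computation with \eqref{q der} cannot be avoided.
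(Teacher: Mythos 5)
Your proposal is correct and follows essentially the same route as the paper's own proof: the same generator formulas for $\hT_{1,\s}$, the same reduction of the node-$0$ relation checks to the interior triple $1,2,3$ via $\hTa$-conjugation, the same direct $N=4$ Serre computation, and the same derivations of \eqref{P1}--\eqref{hetaT} and the Coxeter relations. No gaps to report.
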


\begin{cor}{\label{iso cor}}
	The superalgebras $\Es$ are isomorphic for all $\s \in \Smn$.
\end{cor}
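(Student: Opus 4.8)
The plan is to deduce this immediately from Theorem \ref{Tbraid} together with the transitivity of the $\mathfrak{S}_N$-action on $\Smn$. All the substantive work has already been done in constructing the isomorphisms $\hT_{i,\s}$, so the corollary will be a purely formal consequence.

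First I would recall that by Theorem \ref{Tbraid}, for every $i\in\hat{I}$ and every $\s\in\Smn$ the map $\hT_{i,\s}\colon\Es\to\E_{\sigma_i\s}$ is an isomorphism of superalgebras. Since the simple transpositions $\sigma_1,\dots,\sigma_{N-1}$ generate $\mathfrak{S}_N$, and $\mathfrak{S}_N$ acts transitively on $\Smn$ — a parity sequence is determined by the set of positions of its $m$ entries equal to $1$, and any $m$-subset of $\{1,\dots,N\}$ can be carried to any other by a permutation — given two parities $\s,\s'\in\Smn$ there exists $w\in\mathfrak{S}_N$ with $w\s=\s'$.

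Next I would fix a factorization $w=\sigma_{i_k}\cdots\sigma_{i_1}$ into simple transpositions and form the composite
$$(\hT_{i_k}\cdots\hT_{i_1})_\s=\hT_{i_k,\,\sigma_{i_{k-1}}\cdots\sigma_{i_1}\s}\circ\cdots\circ\hT_{i_1,\s},$$
using the groupoid (restriction) convention adopted in Section \ref{ssbraid}. Each factor is an isomorphism sending its source summand to the next summand, so the composite is an isomorphism $\Es\to\E_{w\s}=\E_{\s'}$, which is exactly what is required.

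There is no genuine obstacle at this stage: the only content is that the $\hT_{i,\s}$ are isomorphisms, which is part of Theorem \ref{Tbraid}, combined with transitivity. If one wishes the resulting isomorphism to be canonical (independent of the chosen reduced word for $w$), this would follow from the Coxeter relations \eqref{Cox1}--\eqref{Cox2}, but for the bare statement of the corollary this refinement is not needed.
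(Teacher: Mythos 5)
Your proposal is correct and follows exactly the route the paper intends: the paper's proof is the one-line statement that the corollary follows from Theorem \ref{Tbraid}, with the implicit content being precisely the transitivity of the $\mathfrak{S}_N$-action on $\Smn$ and composition of the isomorphisms $\hT_{i,\s}$ along a word in the simple transpositions. Your write-up just makes that implicit argument explicit.
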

\begin{proof}
	The corollary follows from Theorem \ref{Tbraid}. 
\end{proof}

\begin{rem}The corollary above treats the case $N\geq 4$. For $N=3$, the isomorphisms between all three algebras  $\Es$ are given by the map $\hTa$.
\end{rem}

\iffalse Define the following automorphisms of $\E_\bullet=\bigoplus_{\s\in \Smn} \Es$ 
\begin{align*}
	  & \hT_i=\bigoplus_{\s\in \Smn} \hT_{i,\s}\,, & \hTa=\bigoplus_{\s\in \Smn} \hTa_{\s} &   & (i\in I). 
\end{align*}
\fi

\begin{cor} Let $N>3$.
	The automorphisms  $\hTa$, $\hT_i$, $i\in \hat{I}$, define an action of the extended affine braid group $\B_N$ on $\E_\bullet$, i.e., they satisfy the relations \eqref{eab1}-\eqref{eab3}.
\end{cor}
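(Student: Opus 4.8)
The plan is to recognize this corollary as the toroidal counterpart of Proposition~\ref{braidaff}: the per-summand (groupoid) identities already established in Theorem~\ref{Tbraid} assemble, by direct sum over $\s\in\Smn$, into the group relations \eqref{eab1}--\eqref{eab3} on $\E_\bullet$. First I would observe that each $\hT_{i,\s}\colon\Es\to\E_{\sigma_i\s}$ and each $\hTa_\s\colon\Es\to\E_{\tau\s}$ is an isomorphism (Theorem~\ref{Tbraid} and \eqref{htau}), and that $\sigma_i$, $\tau$ permute $\Smn$; hence $\hT_i=\bigoplus_\s\hT_{i,\s}$ and $\hTa=\bigoplus_\s\hTa_\s$ are genuine automorphisms of $\E_\bullet$. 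I would then adopt for $\hT$ the same restriction convention used for $\B_N$ in Section~\ref{ssChevbraid}: for a word $w$ in the generators, $w_\s$ denotes the corresponding composite of groupoid morphisms, which sends $\Es$ to $\E_{\pi(w)\s}$, where $\pi$ is the homomorphism \eqref{pi}.

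With this in place, each of \eqref{eab1}--\eqref{eab3} reduces on every summand to an identity proved in Theorem~\ref{Tbraid}. Relation \eqref{eab3}, $\hTa\,\hT_{i-1}\,\hTa^{-1}=\hT_i$, is exactly \eqref{P3} (in the form $(\hTa\,\hT_{i-1})_\s=(\hT_i\,\hTa)_\s$) restricted to $\Es$. The braid relation \eqref{eab2}, $\hT_j\hT_i\hT_j=\hT_i\hT_j\hT_i$ for $j=i\pm1$, restricts to the Coxeter relation \eqref{Cox1}, and the commutation relation \eqref{eab1}, $\hT_i\hT_j=\hT_j\hT_i$ for $j\neq i,i\pm1$, restricts to \eqref{Cox2}. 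Summing the resulting equalities over all $\s\in\Smn$ gives \eqref{eab1}--\eqref{eab3} on the whole of $\E_\bullet$.

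The only point that needs attention is the bookkeeping of sources and targets in the groupoid-to-group passage, but this is automatic: a defining relation of $\B_N$ equates two words $w,w'$ with $\pi(w)=\pi(w')$ in $\mathfrak{S}_N$ (for \eqref{eab2} this is the braid identity $\sigma_i\sigma_{i+1}\sigma_i=\sigma_{i+1}\sigma_i\sigma_{i+1}$, for \eqref{eab1} it is $\sigma_i\sigma_j=\sigma_j\sigma_i$ when $|i-j|\geq2$, and for \eqref{eab3} it is $\tau\sigma_{i-1}\tau^{-1}=\sigma_i$). Hence the two composites $w_\s$ and $w'_\s$ share the source $\Es$ and the common target $\E_{\pi(w)\s}$, so their equality is a statement about a single pair of summands, exactly as supplied by Theorem~\ref{Tbraid}. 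I do not expect any genuine obstacle here; all the content lies in Theorem~\ref{Tbraid}, and the corollary is a formal consequence, entirely parallel to the derivation of Proposition~\ref{braidaff} from Proposition~\ref{ybraid}.
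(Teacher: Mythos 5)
Your proposal is correct and matches the paper's intent exactly: the paper states this corollary without a separate proof, treating it as an immediate consequence of Theorem \ref{Tbraid}, with \eqref{eab1}, \eqref{eab2}, \eqref{eab3} corresponding on each summand to \eqref{Cox2}, \eqref{Cox1}, and \eqref{P3} respectively. Your additional remark on the source/target bookkeeping in the groupoid-to-group passage is accurate and consistent with the convention the paper sets up in Section \ref{ssChevbraid}.
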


\subsection{Toroidal braid group}	

We recall the definition of the toroidal braid group of $\sl_{N}$, cf. \cite{M}.

\begin{dfn}
	The toroidal braid group $\widehat\B_N$ of $\sl_{N}$ is the group generated by elements $\hTa$, $\hT_{i}$, $\cY_{j}$, $i\in I, j\in \hat I$, satisfying the relations
	\begin{align}
		  & \hT_i\hT_{j}=\hT_j\hT_{i}           & (j\neq i,i\pm 1),\label{tb1}    \\
		  & \hT_j\hT_i\hT_{j}=\hT_i\hT_j\hT_{i} & (j=i\pm 1), \label{tb2}         \\
		&\cY_i\cY_j=\cY_j\cY_i, \label{tb3} \\
		  & \hT_i\cY_j=\cY_j\hT_i               & (j\neq i, i+1), \label{tb4}     \\
		  & \hT_i^{-1}\cY_i\hT_i^{-1}=\cY_{i+1} & (i\in I), \label{tb5}           \\
		  & \hTa \hT_i \hTa^{-1}=\hT_{i+1},     & (1\leq i \leq N-2), \label{tb6} \\
		&\hTa^2 \hT_{N-1} \hTa^{-2}=\hT_{1}, \label{tb7} \\
		  & \hTa \cY_i \hTa^{-1}=\cY_{i+1}      & (i\in I).\label{tb8}            
	\end{align}
\end{dfn}

We remark that the quotient of the toroidal braid group $\widehat\B_N$ by the relation $\hTa \cY_0 \hTa^{-1}=\cY_1$ is isomorphic to double affine Hecke group with central element set to 1, see Definition 4.1 in \cite{C}.

The toroidal braid group has the following Fourier transform given by qKZ elements.
\begin{lem}{\cite{C}\cite[(2.38)]{M}\label{tor braid auto}}
	There exists an automorphism $\Phi$ of $\widehat\B_N$ given by
	\begin{align*}
		  & \Phi(\hT_i)=\hT_i, &   & \Phi(\cY_j)=\hT_{j-1}^{-1}\cdots \hT_{1}^{-1}\, \hTa \, \hT_{N-1}\cdots \hT_{j}, &   & \Phi(\hTa)=\cY_1^{-1}\hT_{1}\cdots \hT_{N-1}. 
	\end{align*} \qed
\end{lem}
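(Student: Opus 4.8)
The plan is to prove two things: that $\Phi$ is a well-defined endomorphism of $\widehat\B_N$, i.e.\ that the images of the generators satisfy the defining relations \eqref{tb1}--\eqref{tb8}, and that $\Phi$ is bijective. Conceptually, $\Phi$ is the image of the order-four generator $S$ of the $SL_2(\Z)$ mapping-class action on the elliptic (toroidal) braid group, so that its being an automorphism is a shadow of Cherednik's $SL_2(\Z)$-symmetry \cite{C}; the formulas for $\Phi(\cY_j)$ and $\Phi(\hTa)$ encode the interchange of the two commuting lattices. Accordingly I would organize the verification around the affine braid subgroup $\langle \hTa,\hT_i\rangle\subset\widehat\B_N$, which is a copy of the extended affine braid group $\B_N$: setting $\hT_0:=\hTa\hT_{N-1}\hTa^{-1}$, relations \eqref{tb6}--\eqref{tb7} give $\hTa\hT_i\hTa^{-1}=\hT_{i+1}$ cyclically, which is \eqref{eab3}. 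Hence this subgroup carries the commuting lattice $\cX_j$ of \eqref{braideq}, with $\cX_1=\hTa\hT_{N-1}\cdots\hT_1$.

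First, since $\Phi$ fixes every $\hT_i$, the finite braid relations \eqref{tb1} and \eqref{tb2} are preserved on the nose. Next, a telescoping computation identifies $\Phi(\cY_1)=\cX_1$, and by induction using \eqref{eab8}--\eqref{eab10} one gets $\Phi(\cY_j)=\cX_j\cX_{j-1}^{-1}$ for $2\le j\le N-1$. Relation \eqref{tb5} is then preserved for a purely formal reason: the defining formulas satisfy $\hT_j^{-1}\Phi(\cY_j)\hT_j^{-1}=\Phi(\cY_{j+1})$ identically. Relations \eqref{tb3} and \eqref{tb4} reduce to the commutativity \eqref{eab6} of the $\cX$-lattice and to its commutation \eqref{eab7} with the $\hT_i$: indeed $\hT_i$ commutes with $\Phi(\cY_j)=\cX_j\cX_{j-1}^{-1}$ exactly when $i\neq j,j-1$, which is the condition $j\neq i,i+1$ of \eqref{tb4}. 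Thus all relations among the $\cY$'s and the $\hT_i$'s follow from the known structure of $\B_N$.

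The substance of the proof lies in the relations involving $\Phi(\hTa)=\cY_1^{-1}\hT_1\cdots\hT_{N-1}$. Writing $c:=\hT_1\cdots\hT_{N-1}$, the standard Coxeter-element identity $c\,\hT_i\,c^{-1}=\hT_{i+1}$ for $1\le i\le N-2$, together with the commutation of $\cY_1$ with $\hT_i$ for $i\ge 2$ (from \eqref{tb4}), yields \eqref{tb6} at once. The hard part will be \eqref{tb7} and \eqref{tb8}, which I expect to be the main obstacle. In \eqref{tb7} the wrap-around conjugation $c\,\hT_{N-1}\,c^{-1}$ is not a single generator, and the missing correction is supplied precisely by the factor $\cY_1^{-1}$, which fails to commute with $\hT_1$ (relation \eqref{tb5}); so one cannot separate the braid computation from the lattice computation and must push the $\cY$-factors through using \eqref{tb5} repeatedly, closing the cycle via the $\hTa^2$ twist. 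Relation \eqref{tb8} is similar: conjugating the lattice element $\Phi(\cY_i)\in\B_N$ by $\cY_1^{-1}c$ requires tracking the non-commutation of $\cY_1$ with $\Phi(\cY_i)=\cX_i\cX_{i-1}^{-1}$ through \eqref{tb4}, \eqref{tb5} and the $\cX$-lattice relations. These are finite but bookkeeping-heavy braid manipulations.

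Finally, to establish that the endomorphism $\Phi$ is an automorphism, I would exhibit a two-sided inverse. The cleanest route is to write down the analogous formulas for the companion $SL_2(\Z)$-generator $\Psi$ (the $S^{-1}$, equivalently $S^3$, transform) and verify $\Phi\Psi=\Psi\Phi=\id$ on the generators $\hTa,\hT_i,\cY_j$; alternatively one computes $\Phi^2$ and recognizes it as a manifestly invertible automorphism. Either way invertibility follows once the homomorphism property is in hand, and combined with the relation checks above this proves that $\Phi$ is an automorphism of $\widehat\B_N$, as asserted in \cite{C} and \cite{M}.
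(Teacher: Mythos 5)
The paper does not actually prove this lemma: it is stated without proof and attributed to \cite{C} and \cite{M}, so there is no in-paper argument to compare yours against. Judged on its own terms, your plan is structurally sound and the parts you carry out are correct. Since $\Phi$ fixes the $\hT_i$, relations \eqref{tb1}--\eqref{tb2} are immediate; the identification $\Phi(\cY_1)=\cX_1=\hTa\hT_{N-1}\cdots\hT_1$ and the telescoping/inductive computation $\Phi(\cY_{j+1})=\hT_j^{-1}\Phi(\cY_j)\hT_j^{-1}=\cX_{j+1}\cX_j^{-1}$ via \eqref{eab8}--\eqref{eab10} are right (and consistent with the paper's own definition \eqref{Y hat} of the operators $\cY_{i,\s}$ as $\hX_i\hX_{i-1}^{-1}$); hence \eqref{tb3}--\eqref{tb5} do reduce to the relations of the copy of $\B_N$ generated by $\hTa,\hT_i$, and \eqref{tb6} follows from the Coxeter-element identity together with the commutation of $\cY_1$ with $\hT_{i+1}$ for $i\geq 1$.

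However, as written the argument is not a proof. You explicitly defer \eqref{tb7} and \eqref{tb8} --- which you yourself call ``the substance of the proof'' --- with the remark that they are finite but bookkeeping-heavy, and the inverse $\Psi$ is only described, never written down or verified. These two relations are precisely where the lattice factor $\cY_1^{-1}$ and the braid factor $c=\hT_1\cdots\hT_{N-1}$ fail to decouple, i.e., where the statement stops being formal; announcing that the obstacle exists does not overcome it. To complete the proof you must either carry out those computations explicitly (pushing $\cY_1^{\pm 1}$ through $c$ using \eqref{tb4}--\eqref{tb5} and handling the wrap-around conjugate $c\,\hT_{N-1}\,c^{-1}$, then exhibiting $\Psi$ on generators and checking $\Phi\Psi=\Psi\Phi=\mathrm{id}$), or do what the paper does and cite the verifications in \cite{C} and \cite{M} directly.
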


Note that the subgroup $G\subset \widehat\B_N$ generated by $\hT_1$ and $\hTa$, and the subgroup $H\subset \widehat\B_N$ generated by $\hT_i$, $i\in I$, and $\cY_1$ are both isomorphic to the extended affine braid group $\B_N$. The isomorphism $\gamma$ between these two presentations of $\B_N$ is described in \eqref{braideq}.

Let $i_G$ be the inclusion $i_G: G\cong \B_N\rightarrow \widehat\B_N$ given by
\begin{align*}
    i_G(T_1)=\hT_1, \qquad i_G(\tau)=\hTa,
\end{align*}
and  $i_H$ be the inclusion $i_H: H\cong \B_N\rightarrow \widehat\B_N$ given by
\begin{align*}
    i_H(\cX_1)=\cY_1, \qquad i_H(T_i)=\hT_i\quad & (i\in I).
\end{align*}

The following lemma is easily checked on the generators.

\begin{lem}{\label{phi diag}}
    The homomorphisms $\gamma$, $i_G$, $i_H$ and $\Phi$ satisfy the following commutative diagram
   \begin{center}
       
    \begin{tikzcd}
\widehat{\B}_N \arrow[r,"\sim", "\Phi"' ]
& \widehat{\B}_N  \\
\B_N \arrow[u,hook, "i_H" ]  \arrow[r, "\sim", "\gamma"']
& \B_N \arrow[u,hook, "i_G"']
\end{tikzcd}
\end{center}
\qed
\end{lem}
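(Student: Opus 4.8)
**The plan is to verify the claimed commutativity directly on the two generators of the abstract extended affine braid group $\B_N$**, namely $T_1$ and $\tau$ in the $\gamma$-source presentation, since $i_G$, $i_H$, $\Phi$ and $\gamma$ are all group homomorphisms and a homomorphism is determined by its values on generators. The diagram asserts $\Phi\circ i_H = i_G\circ\gamma$ as maps $\B_N\to\widehat\B_N$, so it suffices to check equality on a generating set of the domain $\B_N$ (the $\gamma$-source copy $H$). The natural generators to use are $T_1$ and $\cX_1$, because $i_H$ is defined precisely by $i_H(T_1)=\hT_1$, $i_H(\cX_1)=\cY_1$, making the left-then-top composite immediately computable.

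First I would compute the top-then-left composite $i_G\circ\gamma$ on these two generators. By \eqref{braideq}, $\gamma(T_1)=T_1$ and $\gamma(\cX_1)=\tau\, T_{N-1}\cdots T_1$; applying $i_G$ (which sends $T_1\mapsto\hT_1$, $\tau\mapsto\hTa$ and extends to the $T_i$ via $i_G(T_i)=\hTa^{\,i-1}\hT_1\hTa^{\,1-i}$, matching \eqref{Ti def}) gives
\begin{align*}
	(i_G\gamma)(T_1)&=\hT_1, & (i_G\gamma)(\cX_1)&=\hTa\,\hT_{N-1}\cdots \hT_1.
\end{align*}
Then I would compute $\Phi\circ i_H$ on the same generators using Lemma \ref{tor braid auto}: $(\Phi\, i_H)(T_1)=\Phi(\hT_1)=\hT_1$, which matches the first column, and $(\Phi\, i_H)(\cX_1)=\Phi(\cY_1)=\hTa\,\hT_{N-1}\cdots\hT_1$ by the formula for $\Phi(\cY_j)$ with $j=1$ (the $\hT_{j-1}^{-1}\cdots\hT_1^{-1}$ prefix being empty). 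These agree with the top-then-left values, so the square commutes.

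The only genuine subtlety — and what I expect to be the main obstacle — is checking that the two presentations of $\B_N$ are compatibly identified, i.e.\ that $i_G$ and $i_H$ really land in the subgroups $G$ and $H$ and restrict to isomorphisms $\B_N\xrightarrow{\sim}G$ and $\B_N\xrightarrow{\sim}H$. This requires confirming that $\hTa,\hT_1$ (resp.\ $\cY_1,\hT_1$) satisfy exactly the defining relations \eqref{eab1}--\eqref{eab3} (resp.\ \eqref{eab4}--\eqref{eab10}) of $\B_N$ inside $\widehat\B_N$; this is where the toroidal relations \eqref{tb1}--\eqref{tb8} are invoked, together with the observation that $\Phi$ is an automorphism (Lemma \ref{tor braid auto}), which transports the relations on the $H$-side to relations on the $G$-side. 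Once this identification is in place, the computation above is a one-line check on generators, which is precisely why the lemma is ``easily checked on generators.''
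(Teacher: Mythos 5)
Your proof is correct and is exactly the verification the paper has in mind: the lemma is stated with the remark that it is ``easily checked on generators,'' and your computation on $T_i$ and $\cX_1$ (using $\gamma(\cX_1)=\tau T_{N-1}\cdots T_1$ and $\Phi(\cY_1)=\hTa\,\hT_{N-1}\cdots\hT_1$) is precisely that check. The well-definedness of $i_G$ and $i_H$ that you flag is likewise taken for granted in the paper, so there is no substantive difference between your argument and the intended one.
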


Recall automorphisms $\zeta_\s$, $\hX_{i,\s}$ of $\Es$ described in Proposition \ref{hX}. Define the following automorphisms of $\Es$
\begin{align}{\label{Y hat}}
	  &   & \cY_{0,\s}=(\zeta \hX_0 \hX_{N-1}^{-1})_\s\, , &   & \cY_{i,\s}=(\hX_i \hX_{i-1}^{-1})_\s &   & (i\in I). 
\end{align}

The following is the super-analog of \cite[Corollary 1]{M}.
\begin{prop}
	The automorphisms $\hTa, \hT_i, \cY_j$, $i\in I$, $j\in \hat{I}$, define an action of the toroidal braid group $\widehat\B_N$ on $\E_\bullet$.
	\begin{proof}
		The relations for $\hT_i$ and $\hTa$ follow from Theorem \ref{Tbraid}. Relation \eqref{tb8} between $\cY_i$ and $\hTa$ follows from \eqref{Xtau}. Relations \eqref{tb4} are clear due to \eqref{fixT}. Equation \eqref{tb5}  between $\cY_i$ and $\hT_j$ on vertical subalgebra follows from \eqref{eab10} and \eqref{eab8} (note that $(\hX_{0}\,v)_\s=v_\s$). To check the relations on $A_0(z)$ we write $A_0(z)=\hTa^{-1}_\s (A_1(-z))$ and use the already established relations with $\hTa$.
	\end{proof}
\end{prop}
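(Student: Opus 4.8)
The plan is to check the defining relations \eqref{tb1}--\eqref{tb8} of $\widehat\B_N$ family by family, verifying each identity on the generators $A_j(z)$, $j\in\hat I$, of $\E_\bullet$. The relations \eqref{tb1}, \eqref{tb2}, \eqref{tb6} and \eqref{tb7}, which involve only $\hTa$ and the $\hT_i$, require no new work: \eqref{tb1} and \eqref{tb2} are the Coxeter relations \eqref{Cox1} and \eqref{Cox2} of Theorem \ref{Tbraid}, \eqref{tb6} is \eqref{P3} for $1\le i\le N-2$, and \eqref{tb7} follows from two applications of \eqref{P3} together with the cyclic convention $\hT_N=\hT_0$, since $\hTa^2\hT_{N-1}\hTa^{-2}=\hTa\hT_0\hTa^{-1}=\hT_1$.

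Next I would dispose of the relations that are diagonal in nature. The commutativity \eqref{tb3} of the $\cY_j$ is immediate from \eqref{Y hat}: each $\hX_{i,\s}$ and $\zeta_\s$ multiplies every generating current by a Laurent monomial in $z$ and in the central element $C$, so distinct $\cY_j$ manifestly commute. The conjugation \eqref{tb8}, $\hTa\cY_i\hTa^{-1}=\cY_{i+1}$, follows by assembling the two identities in \eqref{Xtau}; the correction by $\zeta_\s$ appearing in the boundary identity $(\hTa\,\hX_{N-1})_\s=(\zeta\,\hX_0\,\hTa)_\s$ is exactly the one absorbed into $\cY_0$ in \eqref{Y hat}. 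For the commuting mixed relation \eqref{tb4}, $\hT_i\cY_j=\cY_j\hT_i$ with $j\ne i,i+1$, I would argue directly on generators: $\cY_j$ rescales only the currents at nodes $j$ and $j-1$, which for $j\ne i,i+1$ meet the support $\{i-1,i,i+1\}$ of $\hT_i$ in at most the single node $i-1$ (when $j=i-1$) or $i+1$ (when $j=i+2$); on that node $\hT_i$ produces the explicit $q$-bracket of Lemma \ref{T cur}, and since $\cY_j$ acts on it by a scalar multiple of a power of $z$ while fixing $A_{i,0}$, bilinearity of the bracket makes both sides agree. All remaining generators are fixed by at least one of the two maps.

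The heart of the matter is the braid-type mixed relation \eqref{tb5}, $\hT_i^{-1}\cY_i\hT_i^{-1}=\cY_{i+1}$, and this is where I expect the real difficulty, because $\hT_i$ is not explicit on the node-$i$ currents. First I would reduce to the case $i=1$: conjugating the $i=1$ instance by $\hTa$ and invoking the already-proven relations \eqref{tb6} and \eqref{tb8} propagates it to all $i$, with the cyclic convention $\cY_N=\cY_0$ accounting for node $0$. For $i=1$ it suffices to test equality on the algebra generators. On the currents $A_j(z)$ with $j\in I$, which span the vertical subalgebra, I would transport the identity through $v_\s$: by \eqref{Xv} one has $(\hX_j v)_\s=(v\cX_j)_\s$, and $\hX_0$ restricts to the identity on the vertical subalgebra, so there $\cY_1$ and $\cY_2$ become $\cX_1$ and $\cX_2\cX_1^{-1}$ while $\hT_1$ becomes $T_1$ by \eqref{P1}; the desired identity is then precisely the affine braid relation \eqref{eab8}, valid by Proposition \ref{braidaff}. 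It then remains to treat $A_0(z)$, the single generator outside the vertical subalgebra, and organizing this last step is the main obstacle: I would use $A_0(z)=\hTa^{-1}_\s(A_1(-z))$ from \eqref{htau} and push $\hTa^{-1}$ to the left through $\hT_1^{-1}\cY_1\hT_1^{-1}$ by means of \eqref{tb6} and \eqref{tb8}, rewriting the assertion on $A_0(z)$ as the corresponding assertion at a shifted node applied to $A_1(-z)$, which already lies in the vertical subalgebra and is thus covered by the previous step. Once one checks that this $\hTa$-reduction and the vertical computation via \eqref{eab8} close up consistently across all nodes including node $0$, every relation of $\widehat\B_N$ is verified and the action is established.
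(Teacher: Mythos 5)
Your proposal follows essentially the same route as the paper: the pure $\hT$/$\hTa$ relations come from Theorem \ref{Tbraid}, the $\cY$-relations from the explicit formulas \eqref{X hat}, \eqref{Y hat} and \eqref{Xtau}, the mixed relation \eqref{tb5} is reduced to the affine braid relations \eqref{eab8}--\eqref{eab10} on the vertical subalgebra via \eqref{P1} and \eqref{Xv}, and node $0$ is handled by writing $A_0(z)=\hTa^{-1}_\s(A_1(-z))$ and using the already established $\hTa$-relations. The only spot where you are slightly loose is \eqref{tb4} on the node-$i$ currents themselves, since $\hT_i(A_i(z))$ is not explicit and is not literally ``fixed by one of the two maps''; but this closes the same way as your treatment of \eqref{tb5}, by passing to the vertical subalgebra and citing \eqref{eab7}, so there is no real gap.
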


\subsection{Miki automorphism}

Now we are ready to prove the existence of the Miki automorphism of $\E_\s$ which switches horizontal and vertical subalgebras.

Recall the isomorphism $\iota_\s$ identifying the new Drinfeld and Drinfeld-Jimbo realizations of $U_q\slhs$, see Proposition \ref{X}.

The following is the super-analog of \cite[Theorem 1]{M}.

\begin{thm}{\label{mikithm}} Let $N>3$.
	There exists a superalgebra automorphism $\psi_\s$ of $\Es$ satisfying
	\begin{align}{\label{psi v h}}
		(\psi\, v)_\s= (h\, \iota)_\s, &   & (\psi\, h\, \iota)_\s=(v\, \eta\, \iota^{-1}\, \varphi\, \iota)_\s, &   & \psi^{-1}_\s= (\hat{\eta}'\, \psi'\, \hat{\eta})_\s, 
	\end{align}
	where the first two equalities are equalities of maps from the new Drinfeld realization of $U_q\slhs$ to $\Es$.
	\begin{proof}
		We often write equation of maps from $U_q\slhs$ to $\Es$, similar to \eqref{psi v h}. We understand that new Drinfeld realization is identified with the Drinfeld-Jimbo realization via map $\iota$ and do not distinguish between them. In particular, we skip $\iota$ from our formulas.
		
		Recall notation \eqref{X not}.
		For $i\in I$, let  $\cZ_{i,\s}=(\cY_1\cdots \cY_{i})_\s$.
		
		Using equations \eqref{braideq}, \eqref{tauh} and \eqref{P2} we get 
		\begin{align*}
			\Phi(\cY_{1,\s})h_\s=h_{\s}(\tau T_{N-1}\cdots T_{1})_\s=(h\,\cX_{1})_\s\,. 
		\end{align*}
		And, by relations \eqref{eab8}-\eqref{eab10}, we have
		\begin{align*}
			\Phi(\cY_{i,\s})h_\s=(h\, \cX_{i}\, \cX_{i-1}^{-1})_\s &   & (i\in I\setminus\{1\}). 
		\end{align*}
		Thus, 
		\begin{align}
			  & \Phi(\cZ_{i,\s})h_\s=h_{\s}\cX_{i,\s} &   & (i\in I) \label{Z1}. 
		\end{align}
		
		Define  $\psi_\s$ on the $\Es$ generators by
		\begin{align*}
			  & \psi_\s(X^\pm_{i,r})=(-1)^{ir}\Phi(\cZ_i^{\mp r})_\s (X^\pm_{i,0}),              &   & \psi_\s(K_{i})=K_{i}                          &   & (i\in I, r\in \Z), \\
			  & \psi_\s(X^\pm_{0,r})=(-1)^r \Phi(\hTa^{-1}\cZ_1^{\mp r})_{\tau \s}(X^\pm_{1,0}), &   & \psi_\s(K_{0})=\Phi(\hTa^{-1})_{\tau \s}(K_{1}) &   & (r\in \Z).         
		\end{align*}
		
		For $i\in I$, $r\in \Z'$, by equation \eqref{Z1}, we have
		\begin{align*}
			  & (\psi\, v)_\s (x^{\pm}_{i,r})=(-1)^{ir}\Phi(\cZ_i^{\mp r})_\s (X^\pm_{i,0})=(-1)^{ir}(\Phi(\cZ_i^{\mp r})h)_\s (x^\pm_{i,0})=(-1)^{ir}h_{\s}\cX_{i,\s}^{\mp r}(x^\pm_{i,0})=h_\s (x^{\pm}_{i,r}). 
		\end{align*}
		
		This implies  $\psi_\s v_\s= h_\s$. Thus, $\psi_\s$ extends to a homomorphism  $U^{ver}_q\slhs\to \Es$.
		
		We now check that $\psi_\s$ satisfy the relations involving the node $0$. This is done in a similar way as in Theorem \ref{Tbraid}. For $i\in I$, using \eqref{TTX=X} and the relations in the group, we obtain
		\begin{align*}
			  & \Phi(\hTa)_\s (X^\pm_{i,0})=(\cY_1^{-1}\hT_{1}\cdots \hT_{N-1})_\s (X^\pm_{i,0})=X^\pm_{i+1,0} & (i\neq N-1), \\
			&\Phi(\hTa^2)_\s (X^\pm_{N-1,0})=(\cZ_2^{-1}\hT_{2}\hT_{1}\hT_{3}\hT_{2}\cdots \hT_{N-1}\hT_{N-2})_\s (X^\pm_{N-1,0})=X^\pm_{1,0}.
		\end{align*}
		Thus, the relations involving the nodes $0,1$ and $2$ follow from the relations involving the nodes $1,2$ and $3$ in $U^{ver}_q\slh_{\,\tau \s}$ using the equation
		\begin{align}{\label{psi tau}}
			  & \psi_\s(X^\pm_{i,r})=(-1)^{r} \Phi(\hTa^{-1})_{\tau \s}\psi_{\tau \s}(X^\pm_{i+1,r}) & (i\in \hat{I}). 
		\end{align}
		
		For the relations involving the nodes $0,1$ and $N-1$  we use 
		\begin{align*}
			  & \psi_\s(X^\pm_{i,r})=(-1)^r(d^{m-n})^{\pm r\delta_{i,N-1}} \Phi(\hTa^{-2})_{\tau^2 \s}\psi_{\tau^2 \s}(X^\pm_{i+2,r}) &   & (i=0,1, N-1). 
		\end{align*}
		
		And for the relations involving the nodes $0,N-1$ and $N-2$  we use 
		\begin{align*}
			  & \psi_\s(X^\pm_{i,r})=(-1)^r(d^{m-n})^{\pm r(1-\delta_{i,0})} \Phi(\hTa^{-3})_{\tau^3 \s}\psi_{\tau^3 \s}(X^\pm_{i+3,r}) &   & (i=0,N-1, N-2). 
		\end{align*}
		
		\medskip
		
		We check the equation $ (\psi\, h)_\s=(v\, \eta\, \varphi)_\s$ on the Chevalley generators $e_i$, $i\in \hat{I}$. The proof for $f_i, t_i$, $i\in \hat{I}$, is analogous. By \eqref{braideq} and \eqref{DJnDe}, we have
		\begin{align*}
			&\begin{aligned}
			  & (\psi\, h)_\s(e_i)=\psi_\s(X^+_{i,0})=X^+_{i,0}=v_\s (x^+_{i,0})=(v\, \eta\, \varphi)_\s(e_i) &   &   & (i\in I), 
			\end{aligned}\\
			&\begin{aligned}
			&(\psi\, h)_\s(e_0)=\psi_\s(X^+_{0,0})=\Phi(\hTa^{-1})_{\tau \s}(X^\pm_{1,0})=(\hT_{N-1}^{-1}\cdots \hT_{1}^{-1}\hX_1\hX_2^{-1})_{\tau\s}(X^+_{1,0})=(\hT_{N-1}^{-1}\cdots \hT_{1}^{-1}\hX_1)_{\tau\s}(X^+_{1,0}),\\
			&(v\, \eta\, \varphi)_\s(e_0)=v_\s \eta_\s(T_{N-1}\cdots T_{1}\cX_1^{-1})_\s(x^+_{1,0})=v_\s (T_{N-1}^{-1}\cdots T_{1}^{-1}\cX_1)_{\tau\s}(x^+_{1,0})=(\hT_{N-1}^{-1}\cdots \hT_{1}^{-1}\hX_1)_{\tau\s}(X^+_{1,0}).
			\end{aligned}
		\end{align*}
		
		\medskip
		
		Finally, we show $\psi^{-1}_\s= (\hat{\eta}'\, \psi'\, \hat{\eta})_\s.$
		
		By Lemma \ref{heta} and the identities $ (\psi\, v)_\s= h_\s,$ $(\psi\, h)_\s=(v\, \eta\, \varphi)_\s$, we have
		\begin{align*}
			  & (\psi\, \hat{\eta}'\, \psi' (\hat{\eta}\, v))_\s=(\psi\, \hat{\eta}' (\psi'\, v') \eta)_\s=(\psi (\hat{\eta}'\, h') \eta)_\s=((\psi\, h) \varphi\, \eta)_\s=(v\, \eta\, \varphi\,\varphi\, \eta)_\s=v_\s\,, \\
			  & (\psi\, \hat{\eta}'\, \psi'(\hat{\eta} h))_\s=(\psi\, \hat{\eta}' (\psi'\, h') \varphi)_\s=(\psi (\hat{\eta}'\,v')\eta)_\s=(\psi\, v)_\s=h_\s\,.                                                            
		\end{align*}
		
		Thus, $(\psi\, \hat{\eta}'\, \psi'\, \hat{\eta})_\s=1_\s$ on both $U^{ver}_q\slh_{\s}$ and $U^{hor}_q\slh_{\s}$, but they generate $\Es$. Therefore $(\psi\, \hat{\eta}'\, \psi'\, \hat{\eta})_\s=1_\s$ on $\Es$.
		
		This completes the proof.
	\end{proof}    
\end{thm}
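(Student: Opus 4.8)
The plan is to define $\psi_\s$ directly on the new Drinfeld generators of $\Es$ by \emph{replacing the vertical braid elements with their Fourier transforms}. Writing $\cZ_{i,\s}=(\cY_1\cdots \cY_i)_\s$ for $i\in I$, I would set $\psi_\s(K_i)=K_i$ and $\psi_\s(X^\pm_{i,0})=X^\pm_{i,0}$ for $i\in I$ (so that $\psi_\s$ fixes the common non-affine part $U_q\sl_\s$), and on the higher modes put $\psi_\s(X^\pm_{i,r})=(-1)^{ir}\Phi(\cZ_i^{\mp r})_\s(X^\pm_{i,0})$, with the node-$0$ generators treated through $\Phi(\hTa^{-1})$ applied to $X^\pm_{1,0}$ (resp. $K_1$) in the rotated algebra $\E_{\tau\s}$. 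The design principle behind this, made precise by Lemma~\ref{phi diag}, is that $\cY_{i,\s}$ acting on $U_q\sl_\s$ rebuilds the vertical subalgebra via the $\cX$-action of Proposition~\ref{X action}, while $\Phi(\cY_{i,\s})$ rebuilds the horizontal one. The first step is therefore the intertwining identity $\Phi(\cZ_{i,\s})\,h_\s=h_\s\,\cX_{i,\s}$ for $i\in I$: since $\Phi(\cY_1)=\hTa\hT_{N-1}\cdots\hT_1$ by Lemma~\ref{tor braid auto}, equations \eqref{P2} and \eqref{tauh} transport the horizontal homomorphism exactly to $h_\s(\tau T_{N-1}\cdots T_1)_\s=h_\s\cX_{1,\s}$, and the remaining factors follow from \eqref{eab8}--\eqref{eab10}, telescoping because the $\cX_{j}$ commute.

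Next I would use this to establish the first equality of \eqref{psi v h}, namely $(\psi\,v)_\s=(h\,\iota)_\s$. Since $\iota_\s$ realizes $x^\pm_{i,r}$ as $(-1)^{ir}\cX_{i,\s}^{\mp r}$ applied to $e_i$ (resp. $f_i$) by Proposition~\ref{X}, applying $h_\s$ and the intertwining identity rewrites $(h\,\iota)_\s(x^\pm_{i,r})$ as $(-1)^{ir}\Phi(\cZ_i^{\mp r})_\s(X^\pm_{i,0})$, which is exactly the defining value of $\psi_\s$ on $X^\pm_{i,r}=v_\s(x^\pm_{i,r})$. Because $v_\s$ is an isomorphism onto $U^{ver}_q\slhs$ and $h_\s\iota_\s$ is a homomorphism agreeing with $\psi_\s$ on generators, this identifies $\psi_\s$ on the vertical subalgebra with the homomorphism $(h\,\iota\,v^{-1})_\s$. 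Consequently $\psi_\s$ automatically respects every defining relation of $\Es$ that does not involve the node $0$, since these are precisely the relations of $U^{ver}_q\slhs$.

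The main obstacle is the relations that \emph{do} involve node $0$, and here I would imitate the mechanism used in the proof of Theorem~\ref{Tbraid}. The key is the rotation-intertwining $\psi_\s(X^\pm_{i,r})=(-1)^{r}\Phi(\hTa^{-1})_{\tau\s}\,\psi_{\tau\s}(X^\pm_{i+1,r})$ for all $i\in\hat I$, which holds from the definition together with the shift property $\Phi(\hTa)_\s(X^\pm_{i,0})=X^\pm_{i+1,0}$ (from $\Phi(\hTa)=\cY_1^{-1}\hT_1\cdots\hT_{N-1}$ and \eqref{TTX=X}). Using this repeatedly, any relation supported on one of the node-sets $\{0,1,2\}$, $\{N-1,0,1\}$ or $\{N-2,N-1,0\}$ is transported into a relation of a rotated algebra $\E_{\tau^{k}\s}$ supported on the nodes $\{1,2,3\}$, where it no longer involves node $0$ and is therefore already covered by the vertical-subalgebra argument of the previous paragraph; the only bookkeeping is the spectral-parameter shift by $d^{m-n}$ attached to node $N-1$, handled exactly as in Theorem~\ref{Tbraid}. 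This completes the proof that $\psi_\s$ is a homomorphism of $\Es$.

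Finally I would compute $\psi_\s$ on the horizontal subalgebra and deduce invertibility. The second equality of \eqref{psi v h} is checked on the Chevalley generators $e_i,f_i,t_i$: for $i\in I$ both sides return $X^\pm_{i,0}$ and $K_i$, while for $i=0$ one substitutes the explicit $\iota^{-1}(e_0)=(\cX_1T_{N-1}\cdots T_2T_1^{-1})_\s(x^+_{1,0})$ of Lemma~\ref{iota inverse} and reduces via \eqref{braideq} and the braid relations. For the last equality of \eqref{psi v h}, I would show that $(\psi\,\hat\eta'\,\psi'\,\hat\eta)_\s$ acts as the identity on both $U^{ver}_q\slhs$ and $U^{hor}_q\slhs$: Lemma~\ref{heta} moves $\hat\eta$ across $v$ and $h$, and the two identities just proved, together with $\varphi^2=\eta^2=1$ from Lemmas~\ref{philem} and \ref{etalem}, collapse the composite to $v_\s$ on the vertical generators and to $h_\s$ on the horizontal ones. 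Since these subalgebras generate $\Es$, the composite is the identity on all of $\Es$; running the same argument on the parameter-swapped algebra gives the inverse composition as well, so $\psi_\s$ is a genuine automorphism with $\psi^{-1}_\s=(\hat\eta'\,\psi'\,\hat\eta)_\s$. I expect the node-$0$ verification of the third paragraph to be the principal difficulty, the rest being reductions to the already-established vertical and horizontal homomorphisms.
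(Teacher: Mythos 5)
Your proposal is correct and follows essentially the same route as the paper's proof: the same definition of $\psi_\s$ via $\Phi(\cZ_i^{\mp r})$ and $\Phi(\hTa^{-1})$, the same key intertwining identity $\Phi(\cZ_{i,\s})h_\s=h_\s\cX_{i,\s}$, the same rotation of node-$0$ relations into nodes $\{1,2,3\}$ of $\E_{\tau^k\s}$, and the same two-subalgebra argument for $\psi_\s^{-1}=(\hat\eta'\,\psi'\,\hat\eta)_\s$. No substantive differences to report.
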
    

Since the Miki automorphism $\psi_\s$ sends the vertical subalgebra $\U_q^{ver}\slhs$ to the horizontal $\U_q^{hor}\slhs$  subalgebra, from Proposition \ref{lemv} we obtain the following corollary.

\begin{cor}{\label{h inj}} Let $N>3$.
	For generic values of parameters, the horizontal map $h_\s:\U_q\slhs \rightarrow \Es$ is injective. In particular, $\U_q^{hor}\slhs$ is isomorphic to $\U_q\slhs$.\qed
\end{cor}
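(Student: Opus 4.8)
The plan is to deduce the injectivity of $h_\s$ directly from the existence of the Miki automorphism $\psi_\s$ (Theorem \ref{mikithm}) together with the injectivity of the vertical map $v_\s$ (Proposition \ref{lemv}), exactly along the lines indicated in the paragraph preceding the statement. The key observation is that $\psi_\s$ intertwines $v_\s$ and $h_\s$, so that $h_\s$ factors as a composite of maps each of which is already known to be injective.

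Concretely, I would start from the first identity in \eqref{psi v h}, namely $(\psi\, v)_\s=(h\,\iota)_\s$, viewed as an equality of maps from the new Drinfeld realization of $\U_q\slhs$ to $\Es$. Since $\iota_\s$ is an isomorphism between the new Drinfeld and Drinfeld--Jimbo realizations of $\U_q\slhs$ (Proposition \ref{X}), this identity rearranges to $h_\s=\psi_\s\, v_\s\, \iota_\s^{-1}$. Each factor on the right is injective: $\psi_\s$ is an automorphism of $\Es$, hence bijective; $\iota_\s^{-1}$ is an isomorphism of $\U_q\slhs$; and $v_\s$ is injective for generic values of the parameters by Proposition \ref{lemv}. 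Therefore the composite $h_\s$ is injective. Since its image is $\U_q^{hor}\slhs$ by definition, $h_\s$ restricts to an isomorphism $\U_q\slhs\xrightarrow{\sim}\U_q^{hor}\slhs$, which is the final assertion; equivalently, the factorization shows $\psi_\s(\U_q^{ver}\slhs)=\U_q^{hor}\slhs$, so $\psi_\s$ carries the vertical copy of $\U_q\slhs$ isomorphically onto the horizontal one.

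I expect essentially no obstacle here, since all of the substantive work has already been absorbed into the construction of $\psi_\s$ and the proof that $v_\s$ is injective. The only point requiring care is the bookkeeping of realizations: $v_\s$ is defined on the current (new Drinfeld) generators while $h_\s$ is defined on the Chevalley (Drinfeld--Jimbo) generators, so one must keep the identification $\iota_\s$ explicit when rearranging \eqref{psi v h}, rather than suppressing it as is done inside the proof of Theorem \ref{mikithm}. It is also worth noting that one cannot argue by surjectivity alone—a surjection between abstractly isomorphic infinite-dimensional algebras need not be injective—so the explicit factorization through the invertible maps $\psi_\s$ and $\iota_\s^{-1}$ is genuinely what does the work.
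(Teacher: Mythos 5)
Your proposal is correct and is essentially the paper's own argument: the paper likewise deduces injectivity of $h_\s$ from the identity $(\psi\, v)_\s=(h\,\iota)_\s$, the injectivity of $v_\s$ (Proposition \ref{lemv}), and the fact that $\psi_\s$ is an automorphism carrying $\U_q^{ver}\slhs$ onto $\U_q^{hor}\slhs$. Your extra care with the realization-identification $\iota_\s$ is a harmless elaboration of the same reasoning.
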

The key property used in the proof of Theorem \ref{mikithm}  was the compatibility of $\psi$ with the braid group action which we describe in the next proposition. 
\begin{prop}{\label{miki phi}}
	The automorphism $\psi_\s$ satisfies
	\begin{align}{\label{psi phi}}
		  & (\psi B)_\s=(\Phi(B)\psi)_\s & (B\in \widehat \B_N). 
	\end{align}
	
	\begin{proof}
		If $B=\hT_i$, $i\in I$, we use \eqref{P1}, \eqref{P2} and the first two equalities of \eqref{psi v h} to show \eqref{psi phi} is satisfied on the horizontal and vertical subalgebras. We have
		\begin{align*}
			  & (\psi\, \hT_i\,v)_\s=(\psi\, v\,T_i)_\s=(h\,T_i)_\s=(\hT_i\,h)_\s=(\hT_i\, \psi\,v)_\s=(\Phi(\hT_i)\, \psi\,v)_\s ,                              \\
			  & (\psi\, \hT_i\,h)_\s=(\psi\, h\,T_i)_\s=(v\,\eta\,\varphi\,T_i)_\s=(\hT_i\,v\,\eta\,\varphi)_\s=(\hT_i\, \psi\,h)_\s=(\Phi(\hT_i)\, \psi\,h)_\s. 
		\end{align*}
		Since the horizontal and vertical subalgebras generate $\Es$, we have $(\psi \hT_i)_\s=(\Phi(\hT_i)\psi)_\s$ on $\Es$.
		
		In the case $B=\hTa$, equation \eqref{psi phi} is precisely the equation \eqref{psi tau}.
		
		Since $\widehat{\B}_N$ is generated by $\hT_i, \cY_1$ and $\hTa$, it remains to check \eqref{psi phi} for $B=\cY_1$.
		From the previous cases we have
		\begin{align*}
			(\psi\, \hTa)_\s=(\Phi(\hTa)\,\psi)_\s=(\cY^{-1}_1\, \hT_1\cdots \hT_{N-1}\,\psi)_\s. 
		\end{align*}
		Thus,
		\begin{align*}
			(\hTa^{-1}\,\psi^{-1} )_\s=(\psi^{-1}\, \hT_{N-1}^{-1}\cdots \hT_{1}^{-1}\,\cY_1)_\s=(\hT_{N-1}^{-1}\cdots \hT_{1}^{-1}\,\psi^{-1}\, \cY_1)_\s, 
		\end{align*}
		or equivalently
		\begin{align*}
			(\psi^{-1} \,\cY_1^{-1})_\s=(\hTa\,\hT_{N-1}^{-1}\cdots \hT_{1}^{-1}\,\psi^{-1})_\s. 
		\end{align*}
		
		By the first equality of \eqref{Xv} and the last equality of \eqref{psi v h}, we have
		\begin{align*}
			(\psi \,\cY_1)_\s =(\hat \eta' (\psi')^{-1} (\cY_1')^{-1}\hat \eta )_\s, 
		\end{align*}
		and by equation \eqref{hetaT} and the last equality of \eqref{heta eq}, we have
		\begin{align*}
		(\hat \eta' (\psi')^{-1} (\cY_1')^{-1}\hat \eta )_\s=(\hat \eta'\, \hTa'(\hT_{N-1}')^{-1}\cdots (\hT_{1}')^{-1}(\psi')^{-1}\hat \eta)_\s=(\hTa\, \hT_{N-1}\cdots \hT_{1}\,\psi)_\s=(\Phi(\cY_1)\,\psi)_\s. 
		\end{align*}
	\end{proof}
\end{prop}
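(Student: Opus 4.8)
The goal is to prove Proposition \ref{miki phi}, namely that $(\psi B)_\s = (\Phi(B)\psi)_\s$ for all $B \in \widehat{\B}_N$. Since both $\psi_\s$ and $\Phi$ are compatible with composition, and $\widehat{\B}_N$ is generated by $\hT_i$ ($i\in I$), $\cY_1$, and $\hTa$, it suffices to verify the identity on these generators. The strategy is to reduce each generator case to the two defining properties of $\psi_\s$ established in Theorem \ref{mikithm}, namely $(\psi\,v)_\s = h_\s$ and $(\psi\, h)_\s = (v\,\eta\,\varphi)_\s$ (suppressing $\iota$), together with the fact that the horizontal and vertical subalgebras generate all of $\Es$. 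The plan is to handle the three types of generators in increasing order of difficulty.

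First I would treat $B = \hT_i$ for $i \in I$, where $\Phi(\hT_i) = \hT_i$. Here the verification splits into checking the identity separately on the vertical and horizontal subalgebras. On the vertical side I would chain together $(\psi\,\hT_i\,v)_\s = (\psi\,v\,T_i)_\s = (h\,T_i)_\s = (\hT_i\,h)_\s = (\hT_i\,\psi\,v)_\s$, using property \eqref{P1}, then $(\psi\,v)_\s = h_\s$, then property \eqref{P2}, then $(\psi\,v)_\s=h_\s$ once more. On the horizontal side the analogous chain uses $(\psi\,h)_\s=(v\,\eta\,\varphi)_\s$ together with \eqref{P1} and \eqref{P2}. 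Because these two subalgebras generate $\Es$, the identity then holds on all of $\Es$. The case $B = \hTa$ is already packaged as the auxiliary equation \eqref{psi tau} derived in the proof of Theorem \ref{mikithm}, so nothing new is required there.

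The genuinely delicate case is $B = \cY_1$, and this is where I expect the main obstacle. The issue is that $\cY_1$ does not act simply on either the horizontal or vertical subalgebra in a way that directly matches $\Phi(\cY_1) = \hTa\,\hT_{N-1}\cdots\hT_1$, so one cannot just repeat the subalgebra-by-subalgebra verification. Instead the plan is to derive the $\cY_1$ identity from the already-established cases by conjugating through the anti-automorphisms. Concretely, from the $\hTa$ and $\hT_i$ cases one obtains $(\psi\,\hTa)_\s = (\Phi(\hTa)\,\psi)_\s = (\cY_1^{-1}\hT_1\cdots\hT_{N-1}\,\psi)_\s$; rearranging and inverting yields $(\psi^{-1}\,\cY_1^{-1})_\s = (\hTa\,\hT_{N-1}^{-1}\cdots\hT_1^{-1}\,\psi^{-1})_\s$. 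The crux is then to convert this relation for $\psi^{-1}$ into the desired relation for $\psi$ on $\cY_1$, which is accomplished by applying the third identity of \eqref{psi v h}, $\psi_\s^{-1} = (\hat\eta'\,\psi'\,\hat\eta)_\s$, the involutivity relation \eqref{hetaT} for the $\hT_i$, and the compatibility of $\hat\eta$ with $\cY_1$ coming from the first equality of \eqref{Xv} together with \eqref{heta}.

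Carrying this out, I would compute $(\psi\,\cY_1)_\s = (\hat\eta'\,(\psi')^{-1}(\cY_1')^{-1}\hat\eta)_\s$, then push the anti-automorphisms through using \eqref{hetaT} and \eqref{heta} to transform $(\psi')^{-1}(\cY_1')^{-1}$ into $\hTa\,\hT_{N-1}\cdots\hT_1\,\psi$, arriving at $(\psi\,\cY_1)_\s = (\Phi(\cY_1)\,\psi)_\s$. The main technical care needed is tracking the primes (the $q_1 \leftrightarrow q_3$ switch) and the order reversals induced by the anti-automorphisms, so that the inverses of the $\hT_i$ produced by \eqref{hetaT} recombine in precisely the order appearing in $\Phi(\cY_1)$. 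Once these three generator cases are established, the identity for arbitrary $B \in \widehat{\B}_N$ follows immediately since $\Phi$ is a group homomorphism and $\psi_\s$ intertwines products.
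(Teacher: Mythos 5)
Your proposal is correct and follows essentially the same route as the paper: generator-by-generator verification with the $\hT_i$ case checked on the vertical and horizontal subalgebras via \eqref{P1}, \eqref{P2} and the first two identities of \eqref{psi v h}, the $\hTa$ case read off from \eqref{psi tau}, and the $\cY_1$ case obtained by inverting the $\hTa$ relation and conjugating by $\hat\eta$ using \eqref{hetaT}, \eqref{Xv}, \eqref{heta} and the last identity of \eqref{psi v h}. No substantive differences.
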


Finally, we describe how Miki automorphism changes the grading, cf. \cite[Theorem 1]{M}.
\begin{prop} If $X\in \Es$ is homogeneous, then 
	\begin{align}\label{grad tor}
		\deg_\delta(\psi_\s(X))=-\deg_0(X), \qquad \deg_i(\psi_\s(X))=\deg_\delta(X)+\deg_i(X)-\deg_0(X) \quad  (i\in \hat{I}). 
	\end{align}
\end{prop}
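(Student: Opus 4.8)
The plan is to exploit that $\psi_\s$ is an algebra automorphism while $\deg=(\deg_0,\dots,\deg_{N-1};\deg_\delta)$ is an \emph{additive} $\Z^{N+1}$-grading, and that the right-hand side of \eqref{grad tor} is the linear map $L:\Z^{N+1}\to\Z^{N+1}$, $L(d_0,\dots,d_{N-1};d_\delta)=(d_0',\dots,d_{N-1}';d_\delta')$ with $d_\delta'=-d_0$ and $d_i'=d_\delta+d_i-d_0$ for $i\in\hat{I}$. Since $\deg(\psi_\s(XY))=\deg(\psi_\s(X))+\deg(\psi_\s(Y))$ and $L(\deg X+\deg Y)=L(\deg X)+L(\deg Y)$, the identity $\deg(\psi_\s(X))=L(\deg X)$ propagates through products, so it suffices to verify it on a generating set of $\Es$. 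By the lemma that $\Es$ is generated by $U^{ver}_q\slhs$ and $U^{hor}_q\slhs$, and since the only horizontal generators not already vertical are $E_{0,0},F_{0,0},K_0$, I would check the formula on the vertical current generators $X^\pm_{i,r},\tilde K^\pm_i$ ($i\in I$) and on $E_{0,0}=X^+_{0,0}$, $F_{0,0}=X^-_{0,0}$, $K_0$.

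On the vertical generators I would use the first equality $(\psi\,v)_\s=(h\,\iota)_\s$ of \eqref{psi v h}. A current generator $g$ with $\deg^v(g)=(d_1,\dots,d_{N-1};d_\delta)$ is sent by $\iota_\s$ to an element of $\deg^h=(d_\delta,d_1+d_\delta,\dots,d_{N-1}+d_\delta)$ by \eqref{iota deg}, and then $h_\s$ gives, by \eqref{h deg}, $\deg(\psi_\s(v_\s(g)))=(d_\delta,d_1+d_\delta,\dots,d_{N-1}+d_\delta;0)$. Comparing with $\deg(v_\s(g))=(0,d_1,\dots,d_{N-1};d_\delta)$ from \eqref{v deg}, a direct substitution into $L$ reproduces the same vector, so the formula holds on all vertical generators.

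For the node-$0$ generators I would use the explicit description of $\psi_\s$ from the proof of Theorem \ref{mikithm}, namely $\psi_\s(X^\pm_{0,0})=\Phi(\hTa^{-1})_{\tau\s}(X^\pm_{1,0})$ and $\psi_\s(K_0)=\Phi(\hTa^{-1})_{\tau\s}(K_1)$, together with $\Phi(\hTa^{-1})=\hT_{N-1}^{-1}\cdots\hT_1^{-1}\,\cY_1$ and $\cY_1=(\hX_1\hX_0^{-1})$. I would then track the grading factor by factor: by Proposition \ref{hX} the map $\hX_{i,\s}$ fixes every $\deg_j$ and sends $\deg_\delta\mapsto\deg_\delta-\deg_i$, and the braid operators $\hT_{i,\s}$ preserve $\deg_\delta$ (being graded for the homogeneous grading by Theorem \ref{Tbraid}) and act on $(\deg_0,\dots,\deg_{N-1})$ by the affine reflection $s_i:d_i\mapsto -d_i+d_{i-1}+d_{i+1}$ (indices mod $N$). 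Thus $\cY_1(X^+_{1,0})=-X^+_{1,-1}$ has degree $(\mathbf{e}_1;-1)$, and $s_{N-1}\cdots s_1$ turns $\mathbf{e}_1$ into $(0,-1,\dots,-1)$ while fixing $\deg_\delta=-1$, yielding $(0,-1,\dots,-1;-1)=L(\deg E_{0,0})$. The case of $F_{0,0}$ is the mirror image, and for $K_0$ every factor fixes the zero degree, matching $L(0)=0$.

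The main obstacle is the claim, used in the last paragraph, that $\hT_{i,\s}$ acts on the spatial part $(\deg_0,\dots,\deg_{N-1})$ by $s_i$. I would establish this directly from the action on generators: $\hT_{i,\s}$ fixes $A_j(z)$ for $j\neq i,i\pm1$ by \eqref{fixT}, while on the node-$i,i\pm1$ generators the formulas of Lemma \ref{T cur}, combined with \eqref{P1} and \eqref{v deg}, force $\deg_i\mapsto -\deg_i+\deg_{i-1}+\deg_{i+1}$ with the other coordinates unchanged; crucially, this linear map is the same regardless of the parity of the node, even though the underlying Serre-type relations are not. Once this reflection description is in place, the node-$0$ computation is routine, and this route has the advantage of never invoking the second equality of \eqref{psi v h} (hence not relying on the grading change of $\iota_\s^{-1}$).
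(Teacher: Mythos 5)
Your proof is correct, and the overall strategy (additivity of the $\Z^{N+1}$-grading under the automorphism, hence reduction to a generating set consisting of the vertical current generators plus $E_{0,0},F_{0,0},K_0$) is exactly the paper's. The treatment of the vertical generators via $(\psi\,v)_\s=(h\,\iota)_\s$ together with \eqref{iota deg}, \eqref{v deg}, \eqref{h deg} coincides with what the paper intends. Where you diverge is at the node-$0$ generators: the paper disposes of the horizontal subalgebra in one stroke using the second equality of \eqref{psi v h} together with \eqref{eta deg}, \eqref{iota^-1 deg}, \eqref{h deg}, \eqref{v deg} (and the fact that $\varphi_\s$ preserves $\deg^h$), whereas you unwind the explicit definition $\psi_\s(X^\pm_{0,0})=\Phi(\hTa^{-1})_{\tau\s}(X^\pm_{1,0})$ and push the degree through $\cY_1$ and $\hT_{N-1}^{-1}\cdots\hT_1^{-1}$. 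Your route is self-contained at the level of generators and avoids $\eta$ and $\iota^{-1}$ entirely, but it costs you an auxiliary fact the paper never states for the toroidal operators, namely that $\hT_{i,\s}$ acts on $(\deg_0,\dots,\deg_{N-1})$ by the affine reflection $s_i$ while preserving $\deg_\delta$; your derivation of this from \eqref{fixT}, Lemma \ref{T cur}, \eqref{P1} and \eqref{v deg} (or, alternatively, by transporting \eqref{T deg} through \eqref{P1}--\eqref{P3}) is sound, and your arithmetic $s_{N-1}\cdots s_1(\mathbf{e}_1)=(0,-1,\dots,-1)$ with $\deg_\delta=-1$ does reproduce $L(\deg E_{0,0})$. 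The paper's version is shorter; yours makes the compatibility \eqref{psi phi} with the braid action visible at the level of gradings, which is a mild bonus but not needed here.
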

\begin{proof}
Recall \eqref{v deg} and \eqref{h deg}.

If $X\in U_q^{ver}\slhs$, then  \eqref{grad tor} reduces to \eqref{iota deg}. Similarly if $X\in U_q^{hor}\slhs$, then  \eqref{grad tor} reduces to \eqref{iota^-1 deg} thanks to \eqref{eta deg}.

	The proposition follows since vertical and horizontal subalgebras generate $\Es$.
\end{proof}

\appendix
\allowdisplaybreaks
	
\section{}\label{app 1}
In this appendix, we define another action of the braid group of finite type $A$ on a suitable completion $\widetilde{U}_q\slh_\bullet$ of $U_q\slh_\bullet$. We write an evaluation homomorphism from $\Es$ to $\Uge$ with parity $\s$ in terms of the braid group action.

\subsection{The superalgebra \texorpdfstring{$\Ug$}{Uqglm|n}}

Let $\s$ be a parity sequence. We denote the superalgebra $\Ug$ with parity $\s$ by $\U_q\glhs$. Here we use a presentation of $\Ug$ similar to the presentation of $U_q\widehat{\gl}_m$ given in \cite{DF}.

 The algebra $\U_q\glhs$ is obtained by adding to $U_q\slhs$ a Heisenberg current $H(z)$ commuting with it and extending the root lattice generated by $k_{i}$. For our purposes it is convenient to write the generators and relations in the following way.

Define the matrix $B^\s=(B_{i,j}^\s)_{i,j \in \hat I}$, where $B_{i,j}^\s=s_i(\delta_{i,j}-\delta_{i,j+1})$.
Note that $A^\s_{i,j}=B_{i,j}^\s-B_{i+1,j}^\s$. 

For $i,j \in \hat{I}$, define $\delta_{i>j}=1$ if $i>j$ and $\delta_{i>j}=0$ if $i\leq j$. In this notation, we still consider the elements of $\hat{I}$ modulo $N$, but we identify $\hat{I}$ with the set $\{1,2,\dots,N-1,N\}$. For example, $\delta_{0>1}=\delta_{N>1}=1$.

\medskip

The superalgebra $\U_q\glhs$ is generated by current generators $c$, $\phi^{\pm 1}_i$, $\phi_{i,r}$, $x^\pm_{j,r}$, $i\in \hat{I}$, $j\in I$, $r\in \Z'$. 

Let $\phi^\pm_i(z)=\phi_i^{\pm 1}\exp \left(\pm (q-q^{-1})\sum_{r>0}\phi_{i,\pm r}z^{\mp r}\right)$, $i\in \hat{I}$.

For $i\in I$, define $k^\pm_i(z)=\phi^\pm_i(q^{\mu_\s(i)}z)\phi^\pm_{i+1}(q^{\mu_\s(i)}z)^{-1}$, where $\mu_\s(i)=-\sum_{j=1}^i s_j$, see Section \ref{hv sec}.

The defining relations are as follows. First, the currents
$k^{\pm}_i(z)$, $x^\pm_i(z)$, $i\in I$ satisfy the relations of  $U_q\slhs$ given in the new Drinfeld realization as in Section \ref{sl section}.

\iffalse
Most of the defining relations of $\U_q\glhs$ coincide with the defining relations of $\U_q\slhs$ given in the new Drinfeld realization, see Section \ref{sl section}. More precisely, the subalgebra generated by the coefficients of $k^{\pm}_i(z)$, $x^\pm_i(z)$, $i\in I$, is isomorphic to $U_q\slhs$ given in the new Drinfeld realization.
\fi

The remaining defining relations are as follows.

For $i,j\in \hat{I}$
\begin{align*}
	  & \phi^\pm_i(z)\phi^\pm_j(w)=\phi^\pm_j(w)\phi^\pm_i(z),                                                                                                                                                                                       \\
	  & \dfrac{c^{-1}z-w}{cz-w}\dfrac{c^{-1}z-q^{2s_i\delta_{j>i}}w}{cz-q^{2s_i\delta_{j>i}}w}\phi^+_i(z)\phi^-_j(w)=\dfrac{c^{-1}z-q^{2s_i}w}{cz-q^{2s_i}w}\dfrac{c^{-1}z-q^{-2s_i\delta_{i>j}}w}{cz-q^{-2s_i\delta_{i>j}}w}\phi^-_j(w)\phi^+_i(z). 
\end{align*}
    
For $i\in \hat{I}$ and  $j\in I$
\begin{align*}
	  & (z-q^{\mu_\s(i)+\frac{1}{2}s_i+\frac{3}{2}B^\s_{i,j}}w)\phi^\pm_i(c^{-(1\pm 1)/2}z)x^+_j(w)=q^{B^\s_{i,j}}(z-q^{\mu_\s(i)+\frac{1}{2}s_i-\frac{1}{2}B^\s_{i,j}}w)x^+_j(w)\phi^\pm_i(c^{-(1\pm 1)/2}z),                                                                                      \\
	  & (z-q^{\mu_\s(i)+\frac{1}{2}s_i-\frac{1}{2}B^\s_{i,j}}w)\phi^\pm_i(c^{-(1\mp 1)/2}z)x^-_j(w)=q^{-B^\s_{i,j}}(z-q^{\mu_\s(i)+\frac{1}{2}s_i+\frac{3}{2}B^\s_{i,j}}w)x^-_j(w)\phi^\pm_i(c^{-(1\mp 1)/2}z).                          \end{align*}

	It is known that the subalgebra generated by the coefficients of $k^{\pm}_i(z)$, $x^\pm_i(z)$, $i\in I$, is isomorphic to $U_q\slhs$.
	
	The Heisenberg current commuting with $U_q\slhs$ is given by
	$$
	H^\pm (z)= H^{\pm1}\exp \big(\pm (q-q^{-1})\sum_{r>0}H_{\pm r}z^{\mp r}\big)=  \prod_{i=1}^N\left(\phi_{i}^\pm(zq^{-2\mu_{\bs s}(i)+s_i} )\right)^{s_i}.
	$$
	Note that $H=\prod_i {\phi_i}^{s_i}$ is central.
	  \medskip
	  
	  It is useful to write the relations between the Cartan currents $k_i^\pm(z)$ and $\phi_j^{\pm}(w)$:
	  \begin{align*} 
	  & \frac{c^{-1}q^{-\mu_\s(i)-\frac{1}{2}s_i+\frac{1}{2}B_{i,j}}z-w}{cq^{-\mu_\s(i)-\frac{1}{2}s_i+\frac{1}{2}B^\s_{i,j}}z-w}\phi^+_i(z)k^-_j(w)=\frac{c^{-1}q^{-\mu_\s(i)-\frac{1}{2}s_i-\frac{3}{2}B^\s_{i,j}}z-w}{cq^{-\mu_\s(i)-\frac{1}{2}s_i-\frac{3}{2}B^\s_{i,j}}z-w}k^-_j(w)\phi^+_i(z), \\
	  & \frac{c^{-1}q^{-\mu_\s(i)-\frac{1}{2}s_i-\frac{3}{2}B^\s_{i,j}}z-w}{cq^{-\mu_\s(i)-\frac{1}{2}s_i-\frac{3}{2}B^\s_{i,j}}z-w}\phi^-_i(z)k^+_j(w)=\frac{c^{-1}q^{-\mu_\s(i)-\frac{1}{2}s_i+\frac{1}{2}B^\s_{i,j}}z-w}{cq^{-\mu_\s(i)-\frac{1}{2}s_i+\frac{1}{2}B^\s_{i,j}}z-w}k^+_j(w)\phi^-_i(z). 
\end{align*}

\medskip
There is the \textit{homogeneous $\Z$-grading} of $\U_q\glhs$ given by
\begin{align*}
	\deg_{\delta}( x^\pm_{j,r})=r,\quad \deg_{\delta}(\phi_{j,r})=r, \quad \deg_{\delta}(\phi_j^{\pm 1})=\deg_{\delta}(c)=0 \quad  (j\in I,  r\in \Z'). 
\end{align*}
	Moreover, 
$\deg^v (X)=\left(\deg_1^v(X),\dots,\deg_{N-1}^v(X);\deg_{\delta}(X) \right)$,
	where $X\in \U_q\glhs$ and for $i\in I$,  
\begin{align*}
	  & \deg_i^v(x^\pm_{j,r})=\pm \delta_{i,j}, & \deg_i^v(\phi_{j,r})=\deg_i^v  (\phi_j^{\pm 1})=\deg_i^v(c)=0 &&(j\in I,  r\in \Z'),
\end{align*}
defines a $\Z^{m+n}$-grading of superalgebra $\U_q\glhs$.
	
We define $\widetilde{U}_q\widehat{\mathfrak{gl}}_{\s}$ to be the completion of $\U_q\glhs$ with respect to the homogeneous grading in the positive direction. Thus the elements of $\widetilde{U}_q\widehat{\mathfrak{gl}}_{\s}$ are series of the form $\sum_{j=-M}^\infty g_j$, with $g_j\in \U_q\glhs,$ $\deg_{\delta} g_j=j$. The algebra $\widetilde{U}_q\widehat{\mathfrak{sl}}_{\s}$ is defined in the same way.
	
\begin{lem}{\label{lem1}}
	We have an embedding
	\begin{align*}
		U_q\widehat{\mathfrak{gl}}_{\s} \rightarrow \widetilde{U}_q\widehat{\mathfrak{gl}}_{\s}. 
	\end{align*}
	\qed
\end{lem}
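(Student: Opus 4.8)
The plan is to show that the canonical map $\U_q\glhs \to \widetilde{U}_q\glhs$, which sends an element to its (finite) decomposition into $\deg_\delta$-homogeneous components regarded as a bounded-below series, is a well-defined injective algebra homomorphism. The entire argument rests on the single fact that $\deg_\delta$ is a genuine $\Z$-grading of $\U_q\glhs$, so the content of the lemma is really that passing to the completion collapses no elements.

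First I would confirm, as is essentially asserted in the definition of the homogeneous grading, that $\deg_\delta$ is compatible with the defining relations and hence yields a direct sum decomposition $\U_q\glhs=\bigoplus_{d\in\Z}(\U_q\glhs)_d$, where $(\U_q\glhs)_d$ is the span of the elements of $\deg_\delta$ equal to $d$. Concretely, in each defining relation one reads off that the two sides carry the same $\deg_\delta$: each factor $z^{-r}$ or $w^{-r}$ contributes $r$, so the $\phi$-$\phi$ and $\phi$-$x^\pm$ relations are homogeneous, and the relations among the $k^\pm_i(z),x^\pm_i(z)$ are already homogeneous for the grading \eqref{nD grading}. This is the only verification required and it is routine.

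Next I would set up the completion as an algebra. By definition its elements are the series $\sum_{j\geq -M}g_j$ with $g_j\in(\U_q\glhs)_j$, that is, the bounded-below part of $\prod_{d\in\Z}(\U_q\glhs)_d$. For two such series $a=\sum_{i\geq -M_1}a_i$ and $b=\sum_{j\geq -M_2}b_j$ the product is $\sum_d c_d$ with $c_d=\sum_{i+j=d}a_ib_j$; for fixed $d$ the constraints $i\geq -M_1$ and $d-i\geq -M_2$ leave only finitely many terms, so each $c_d$ is a well-defined element of $(\U_q\glhs)_d$ and $ab$ is again bounded below. Hence $\widetilde{U}_q\glhs$ is an associative superalgebra (the Koszul signs in the product do not interact with the degree count), and the natural map, sending $x=\sum_d x_d$ to the same finite sum viewed as a bounded-below series, is an algebra homomorphism.

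Finally, injectivity is immediate from the grading: if $x=\sum_d x_d$ maps to $0$ in $\widetilde{U}_q\glhs$, then every homogeneous component $x_d$ vanishes, whence $x=0$. I do not expect a genuine obstacle here; the only point worth isolating is the very first one, namely that $\deg_\delta$ descends to a well-defined grading, since it is precisely this direct-sum decomposition that prevents the inclusion of the direct sum into the bounded-below direct product from having a kernel.
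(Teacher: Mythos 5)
The paper states this lemma without proof (it is marked \qed as immediate), and your argument supplies exactly the routine verification that is being taken for granted: $\deg_\delta$ is compatible with the defining relations and hence gives a direct sum decomposition, the bounded-below series form an algebra because each total-degree component of a product is a finite sum, and the canonical map is injective since equality of series is componentwise. Your proof is correct and is the intended argument.
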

	
A $U_q\widehat{\mathfrak{gl}}_{\s}$-module $V$ is {\it admissible} if for any $v\in V$ there exist $M=M_v>0$ such that $xv=0$ for all $x \in U_q\widehat{\mathfrak{gl}}_{\s}$ with $\deg_\delta x> M $. Any admissible $U_q\widehat{\mathfrak{gl}}_{\s}$-module is also an $\widetilde{U}_q\widehat{\mathfrak{gl}}_{\s}$-module. 
	 
A vector $v$ in a $U_q\widehat{\mathfrak{gl}}_{\s}$-module is called {\it highest weight vector} if it is highest weight vector with respect to $U_q\widehat{\mathfrak{sl}}_{\s}\subset U_q\widehat{\mathfrak{gl}}_{\s}$
 and is annihilated by the positive modes of the current $H(z)$:
$$
e_iv=0, \qquad  \phi_iv=q^{\lambda_i}v \quad (i\in \hat I), \qquad  H_rv=0, \quad (r>0) .
$$
A $U_q\widehat{\mathfrak{gl}}_{\s}$-module $V$ is called {\it highest weight module} if $V$ is generated by the highest weight vector $v$, $V=U_q\widehat{\mathfrak{gl}}_{\s}v$.
	 
Highest weight $U_q\widehat{\mathfrak{gl}}_{\s}$-modules are admissible.

\medskip
	 
As before, define $U_q\widehat{\mathfrak{gl}}_{\bullet}=\bigoplus_{\s\in \Smn} U_q\widehat{\mathfrak{gl}}_{\s}$, $\widetilde{U}_q\widehat{\mathfrak{gl}}_{\bullet}=\bigoplus_{\s\in \Smn} \widetilde{U}_q\widehat{\mathfrak{gl}}_{\s}$, and
$\widetilde{U}_q\widehat{\mathfrak{sl}}_{\bullet}=\bigoplus_{\s\in \Smn} \widetilde{U}_q\widehat{\mathfrak{sl}}_{\s}$.

Let $\B_N^{fin}$ be the subgroup of $\B_N$ generated by $T_i$, $i\in I$, see Section \ref{ssbraid}. We call $\B_N^{fin}$ the {\it braid group associated with $\sl_{N}$}.

The ``fused currents'' technique can be used to define an action of the group $\B_N^{fin}$ on $\widetilde{U}_q\widehat{\mathfrak{sl}}_{\bullet}$ as follows. See \cite{FJMM} for a more detailed discussion of fused currents.

\begin{prop}
	For $i\in I$, $\s\in \Smn$, there exists an isomorphism $T_{i,\s}:\widetilde{U}_q\widehat{\mathfrak{sl}}_{\s} \rightarrow \widetilde{U}_q\widehat{\mathfrak{sl}}_{\sigma_i \s}$ given by
	\begin{align}
		&T_{i,\s}(x^+_{i+1}(z))=s_{i+1}\lim_{z'\to z} \left(1-\dfrac{z}{z'}\right)x^+_{i}(q^{-s_i}z')x^+_{i+1}(z),\\
		&T_{i,\s}(x^+_{i-1}(z))=s_{i+1}\lim_{z'\to z} \left(1-\dfrac{z}{z'}\right)x^+_{i}(q^{-s_{i+1}}z')x^+_{i-1}(z),\\
		&T_{i,\s}(x^-_{i+1}(z))=s_i\lim_{z'\to z} \left(1-\dfrac{z}{z'}\right)x^-_{i+1}(z')x^-_{i}(q^{-s_i}z),\\
		&T_{i,\s}(x^-_{i-1}(z))=s_i\lim_{z'\to z} \left(1-\dfrac{z}{z'}\right)x^-_{i-1}(z')x^-_{i}(q^{-s_{i+1}}z),\\
		&T_{i,\s}(k^\pm_{i+1}(z))=k^\pm_{i}(q^{-s_{i}}z)k^\pm_{i+1}(z),\\
		&T_{i,\s}(k^\pm_{i-1}(z))=k^\pm_{i}(q^{-s_{i+1}}z)k^\pm_{i-1}(z),\\
		&T_{i,\s}(x^+_{i}(z))=s_ix^-_{i}(c^{-1}q^{-s_i-s_{i+1}}z)k^+_i(c^{-1}q^{-s_i-s_{i+1}}z)^{-1},\\
		&T_{i,\s}(x^-_{i}(z))=s_{i+1}k^-_i(c^{-1}q^{-s_i-s_{i+1}}z)^{-1}x^+_{i}(c^{-1}q^{-s_i-s_{i+1}}z),\\
		&T_{i,\s}((k^\pm_{i}(z))=k^\pm_{i}(q^{-s_i-s_{i+1}}z)^{-1},\\
		  & T_{i,\s}(A_j(z))=A_j(z) & (j\neq i, i\pm 1\, A=x^\pm, k^\pm). 
	\end{align}
	
	Moreover, the isomorphisms $T_{i,\s}$ satisfy
	\begin{align*}
		&(\eta\, T_{i}\, \eta)_\s=(T_{i,\sigma_{i}\s})^{-1},\\
		  & (T_iT_{j})_\s=(T_jT_{i})_\s       & (j\neq i,i\pm 1), \\
		  & (T_jT_iT_{j})_\s=(T_iT_jT_{i})_\s & (j=i\pm 1).       
	\end{align*}
	\begin{proof}
		The proposition is checked directly. In some cases it is useful to consider the following alternative formula for the fused currents. This formula for $n=0$, i.e., in the purely even case, was obtained in \cite{DK}, see also \cite{M2}.
		 
		Set $\Res_{z}(\sum_{i\in\Z} a_iz^{-i})=a_1$. Then,
		
		\begin{align*}
			  & \lim_{z'\to z} \left(1-\dfrac{z}{z'}\right)x^+_{i}(q^{A_{i,j}}z')x^+_{j}(z)=\Res_{w}\left( x^+_{i}(q^{A_{i,j}}z)x^+_{j}(w)-(-1)^{|i||j|}\dfrac{q^{2A_{i,j}}z-w}{q^{A_{i,j}}(z-w)} x^+_{j}(w)x^+_{i}(q^{A_{i,j}}z) \right),   \\
			  & \lim_{z'\to z} \left(1-\dfrac{z}{z'}\right)x^-_{i}(z')x^-_{j}(q^{A_{i,j}}z)=\Res_{w}\left( x^-_{i}(z)x^-_{j}(q^{A_{i,j}}w)-(-1)^{|i||j|}\dfrac{q^{-2A_{i,j}}z-w}{q^{-A_{i,j}}(z-w)} x^-_{j}(q^{A_{i,j}}w)x^-_{i}(z) \right), 
		\end{align*}
		where the rational functions must be expanded in the region $|w|>|z|$.
	\end{proof}
\end{prop}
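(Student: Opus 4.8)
The plan is to verify the proposition by direct computation in the current generators, using the residue reformulation of the fused currents stated in the proof as the principal tool; because the maps change parity, throughout one checks the images against the defining relations of the \emph{target} algebra $\widetilde{U}_q\widehat{\mathfrak{sl}}_{\sigma_i\s}$, whose Cartan matrix is $A^{\sigma_i\s}$. First I would confirm that each limit defining $T_{i,\s}$ gives a well-defined element of $\widetilde{U}_q\widehat{\mathfrak{sl}}_{\s}$: the quadratic relation between the adjacent currents shows that the shifted, normalized product $\left(1-\tfrac{z}{z'}\right)x^\pm_i(z')\,x^\pm_{i\pm1}(z)$ appearing in the formulas has a regular limit as $z'\to z$, the value being an infinite series of unbounded homogeneous degree---which is exactly why one must pass to the completion. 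The residue formula rewrites this value as a residue of an ordinary, non-coincident product, so that any relation among fused currents reduces to the quadratic and Serre relations already available in $\widetilde{U}_q\widehat{\mathfrak{sl}}_{\s}$.

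Next I would check that the images satisfy all defining relations of $\widetilde{U}_q\widehat{\mathfrak{sl}}_{\sigma_i\s}$. The relations among Cartan currents and their action on $x^\pm$ are bilinear and follow by tracking the rational prefactors through the fusion. The delicate one is the $E$-$F$ relation: the commutator $[\,T_{i,\s}(x^+_j(z)),T_{i,\s}(x^-_k(w))\,]$ must reproduce the correct Cartan currents of the target, and here the formulas expressing $T_{i,\s}(x^\pm_i(z))$ through the opposite current together with a Cartan factor are decisive, since they convert the $\delta$-function terms into the required $k^\pm_i$. The quadratic and Serre relations are dispatched with the residue formula. This establishes that $T_{i,\s}$ is a superalgebra homomorphism.

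To promote it to an isomorphism I would prove the relation $(\eta\,T_i\,\eta)_\s=(T_{i,\sigma_i\s})^{-1}$, which simultaneously exhibits the inverse. Since $\eta_\s$ exchanges $x^+_i$ with $x^-_i$ and inverts the spectral parameter (Lemma \ref{etalem}), the composite $\eta\,T_i\,\eta$ sends each current generator to an explicit fused expression, and one verifies on generators that composing it with $T_{i,\s}$ yields the identity; this is a finite, mode-by-mode check mirroring the proof of Lemma \ref{etalem}, and it gives invertibility on the dense subalgebra and hence, by continuity, on the completion.

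The main obstacle I expect is the braid relations. The commuting relation $(T_iT_j)_\s=(T_jT_i)_\s$ for $|i-j|>1$ is immediate, since $T_{i,\s}$ fixes every current $A_j(z)$ with $j\neq i,i\pm1$, so the two fusions act on disjoint families of currents. The cubic relation $(T_jT_iT_j)_\s=(T_iT_jT_i)_\s$ for $j=i\pm1$ is the genuinely laborious step: both sides are computed by iterating the fusion procedure on $x^\pm_i$ and $x^\pm_{i\pm1}$, and equality must be extracted by repeated use of the residue formula and the quadratic relations. Should a direct attack prove unwieldy, the alternative is to identify these fused-current maps with the Drinfeld--Jimbo braid automorphisms of Proposition \ref{braidaff}---checking agreement on the generators for which Lemma \ref{T cur} makes the latter explicit and invoking continuity into the completion---and thereby inherit the Coxeter relations already established there.
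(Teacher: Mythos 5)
Your primary plan---verify the relations of the target algebra directly on the fused currents, using the residue reformulation to reduce everything to the quadratic and Serre relations of the source, and obtain invertibility from the conjugation identity with $\eta$---is exactly the paper's approach; the paper's proof consists of the words ``checked directly'' plus that residue formula, so your proposal is a faithful elaboration of it. Two cautions, though. First, your fallback for the Coxeter relations (identifying these maps with the Chevalley-generator braid automorphisms of Proposition \ref{ybraid}) does not work: the appendix explicitly introduces ``another'' action, and indeed $T_{i,\s}(x^+_i(z))=s_i x^-_i(\cdot)k^+_i(\cdot)^{-1}$ involves all modes of the currents and lands only in the completion, whereas the action of Section \ref{af braid sec} preserves $U_q\slhs$ and sends $x^+_{i,0}$ to $-s_i x^-_{i,0}k_i$; these are genuinely different maps, so the Coxeter relations cannot be inherited from Proposition \ref{braidaff} and must be checked for the fused-current action itself. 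Second, the commutation $(T_iT_j)_\s=(T_jT_i)_\s$ for $j\neq i,i\pm1$ is not quite ``immediate from disjointness'': for $j=i\pm 2$ both maps act nontrivially on the shared node $i\pm1$, so even this case requires a (short) computation with iterated fused products.
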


Define
\begin{align*}
	  & T_i=\bigoplus_{\s\in \Smn} T_{i,\s} &   & (i\in \hat{I}). 
\end{align*}

\begin{cor}
	The automorphisms $T_i$, $i\in I$, define an action of $\B_N^{fin}$ on $\widetilde{U}_q\widehat{\mathfrak{sl}}_{\bullet}$.
\end{cor}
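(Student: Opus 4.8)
The plan is to invoke the groupoid-to-group passage exactly as in the proof of Proposition \ref{braidaff}. Recall that $\B_N^{fin}$ is the finite braid group of type $A_{N-1}$: it is generated by $T_i$, $i\in I$, subject only to the braid relations \eqref{eab4} and \eqref{eab5} restricted to $i,j\in I$. Hence to exhibit an action of $\B_N^{fin}$ on $\widetilde{U}_q\widehat{\mathfrak{sl}}_{\bullet}$ it suffices to check that the direct-sum maps $T_i=\bigoplus_{\s\in\Smn}T_{i,\s}$ are automorphisms of $\widetilde{U}_q\widehat{\mathfrak{sl}}_{\bullet}$ satisfying those two relations.

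First I would observe that each $T_i$ is a genuine automorphism. By the preceding proposition, $T_{i,\s}$ is an isomorphism $\widetilde{U}_q\widehat{\mathfrak{sl}}_{\s}\to\widetilde{U}_q\widehat{\mathfrak{sl}}_{\sigma_i\s}$; since $\sigma_i$ is an involution on $\Smn$, the summands are permuted in pairs and the $T_{i,\s}$ glue to a bijective endomorphism of the whole sum, with inverse $\bigoplus_{\s}(T_{i,\s})^{-1}$ (supplied explicitly by the relation $(\eta\,T_i\,\eta)_\s=(T_{i,\sigma_i\s})^{-1}$). Using the restriction convention $T_\s$ for the $\s$-summand, as in Section \ref{ssChevbraid}, an identity of endomorphisms of $\widetilde{U}_q\widehat{\mathfrak{sl}}_{\bullet}$ holds if and only if it holds after restriction to every summand.

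It remains to reduce each defining relation to the preceding proposition. Restricting $T_iT_j=T_jT_i$ with $j\neq i,i\pm1$ to the $\s$-summand yields $(T_iT_j)_\s=(T_jT_i)_\s$, which is one of the braid identities established there; note that both sides indeed map $\widetilde{U}_q\widehat{\mathfrak{sl}}_{\s}$ to the same summand $\widetilde{U}_q\widehat{\mathfrak{sl}}_{\sigma_i\sigma_j\s}$ because $\sigma_i\sigma_j=\sigma_j\sigma_i$. Likewise, restricting $T_jT_iT_j=T_iT_jT_i$ with $j=i\pm1$ gives $(T_jT_iT_j)_\s=(T_iT_jT_i)_\s$, again established in the proposition, with both sides landing in $\widetilde{U}_q\widehat{\mathfrak{sl}}_{\sigma_i\sigma_j\sigma_i\s}=\widetilde{U}_q\widehat{\mathfrak{sl}}_{\sigma_j\sigma_i\sigma_j\s}$. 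As these hold for every $\s\in\Smn$, the relations hold as automorphisms of $\widetilde{U}_q\widehat{\mathfrak{sl}}_{\bullet}$, so the $T_i$ define the desired action. The real work is already done in the proposition via the fused-current computation; the corollary is only this formal passage, and so I expect no serious obstacle beyond the bookkeeping of which summand each composite maps to.
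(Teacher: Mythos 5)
Your argument is correct and matches the paper's approach: the paper leaves this corollary unproved precisely because the braid relations $(T_iT_j)_\s=(T_jT_i)_\s$ and $(T_jT_iT_j)_\s=(T_iT_jT_i)_\s$ are already asserted in the preceding proposition, so only the formal groupoid-to-group bookkeeping you describe remains. No gaps.
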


We are now able to construct an evaluation homomorphism from $\Es$ to $\widetilde{U}_q\widehat{\mathfrak{gl}}_{\s}$. The evaluation map for the standard parity sequence was constructed in \cite{BM} without explicitly using the above braid action. The evaluation map for all choices of $\s$ given in the following theorem coincides with the previous one if $\s$ is the standard parity sequence. This construction is similar to the $n=0$ case, which was done in \cite{M2}.

\begin{thm}
	Fix $u\in \mathbb{C}^\times$. The following map is a surjective homomorphism of superalgebras $\eva_u^\s : \Es \rightarrow \widetilde{U}_q\widehat{\mathfrak{gl}}_{\s}$ with  $ C^2=q_3^{m-n}$:
	\begin{align*}
		&K\mapsto 1,\quad C\mapsto c,\\
		  & E_i(z)\mapsto x^+_i(d^{-\mu_\s(i)}z),\quad F_i(z)\mapsto x^-_i(d^{-\mu_\s(i)}z),\quad K_i^\pm(z)\mapsto k_i^\pm(d^{-\mu_\s(i)}z) \quad & (i\in I), \\
		& E_0(z)\mapsto u^{-1}\,\phi^-_0(c^{-2}q^{2(n-m)}z)\times(T_{N-1}\dots T_1)_{\tau \s}(x^+_1(zq^{s_N}))\times \phi^+_0(c^{-1}q^{2(n-m)}z)^{-1},\\
		&F_0(z)\mapsto u\,\phi^-_0(c^{-1}q^{2(n-m)}z)^{-1}\times (T_{N-1}\dots T_1)_{\tau \s}(x^-_1(zq^{s_N}))\times \phi^+_0(c^{-2}q^{2(n-m)}z),\\
		&K^\pm_0(z)\mapsto (T_{N-1}\dots T_1)_{\tau \s}(k^\pm_1(zq^{s_N}))\times \phi^\pm_0(c^{-2}q^{2(n-m)}z)\phi^\pm_0(q^{2(n-m)}z)^{-1}.
	\end{align*}
	\begin{proof}
		The proof of the theorem is the same as in \cite{BM} with the appropriate changes on the shifts in the formulae.
	\end{proof}
\end{thm}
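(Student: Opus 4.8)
The plan is to check that $\eva_u^\s$ respects each of the defining relations \eqref{relCK}--\eqref{Serre6} and then that it is surjective, splitting the relations into those that do not involve the affine node $0$ and those that do.

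For the relations among the nodes $i\in I$, note that $\eva_u^\s$ sends $E_i(z),F_i(z),K_i^\pm(z)$ to the rescaled current generators $x_i^\pm(d^{-\mu_\s(i)}z),k_i^\pm(d^{-\mu_\s(i)}z)$ of $U_q\slhs$, viewed inside $\widetilde U_q\glhs$ through Lemma \ref{lem1}, with $C\mapsto c$. The rescaling is engineered so that the factors $d^{M_{i,j}^\s}$ occurring in \eqref{KK2}--\eqref{KF} and in \eqref{EE FF} are absorbed: for $i,j\in I$ with $A_{i,j}^\s\neq 0$ one has $M_{i,j}^\s+\mu_\s(i)=\mu_\s(j)$, so after substituting $z\mapsto d^{\mu_\s(i)}z$, $w\mapsto d^{\mu_\s(j)}w$ each toroidal relation acquires a common power of $d$ on both sides and collapses to the corresponding new Drinfeld relation of $U_q\slhs$. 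Equivalently, the restriction of $\eva_u^\s$ to the diagram subalgebra $\Es^I=U^{ver}_q\slhs$ is $v_\s^{-1}$ followed by the inclusion of Lemma \ref{lem1}; since $v_\s$ is a homomorphism, all relations not involving node $0$ hold automatically.

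The substance of the proof is the relations involving node $0$. Here the images $E_0(z),F_0(z),K_0^\pm(z)$ are the fused currents $(T_{N-1}\cdots T_1)_{\tau^{-1}\s}$ applied to the node-$1$ currents (with the spectral shift $q^{s_N}$), dressed by the Heisenberg currents $\phi_0^\pm$. I would first rewrite these images using the explicit description of the braid action on fused currents given in the preceding Proposition, and then compute the required commutators. The $K$-$K$, $K$-$E$, $K$-$F$ relations \eqref{KK2}--\eqref{KF} for node $0$ reduce to the exchange relations between $\phi_0^\pm$ and $k_j^\pm,x_j^\pm$ recorded in the presentation of $\U_q\glhs$, together with the Cartan relations of the $\sl$-part; the $\delta$-function identities in the $E$-$F$ relation \eqref{EF} come from the $\sl$ commutator of the fused currents. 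The resonance $C^2=q_3^{m-n}$ is forced precisely at this step: it is the condition under which the poles of the dressing factors and the central $c$-dependence align so that $[E_0(z),F_0(w)]$ reproduces $K_0^\pm$ and the central element $K=K_0K_1\cdots K_{N-1}$ is sent to $1$, exactly as in the standard-parity case of \cite{BM} and the even case \cite{M2}.

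The main obstacle will be the Serre relations \eqref{Serre3}--\eqref{Serre6} at node $0$ and the quadratic $E$-$E$/$F$-$F$ relations between node $0$ and nodes $1,N-1$: verifying these means carrying the fused-current expressions through the iterated $q$-brackets and checking that the cancellations survive, which is where the parity-dependent shifts $q^{s_N}$ and $d^{\mu_\s}$ must be tracked carefully. Each such identity is, however, formally the same computation as in \cite{BM} once the shifts are inserted, which is the content of the stated proof. Finally, surjectivity follows as there: the image already contains all of $U_q\slhs$ and, through the image of $K_0^\pm(z)$, the ratio of Heisenberg currents $\phi_0^\pm$, from which the remaining generators of $\widetilde U_q\glhs$ are recovered for generic values of the parameters.
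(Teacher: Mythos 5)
Your proposal is correct and follows essentially the same route as the paper, whose entire proof is a one-line deferral to \cite{BM} ``with the appropriate changes on the shifts in the formulae''; your outline --- reducing the relations among nodes in $I$ to the vertical embedding via the identity $M_{i,j}^{\s}+\mu_\s(i)=\mu_\s(j)$, and handling the node-$0$ relations through the fused currents and the $\phi_0^\pm$-dressing with the resonance $C^2=q_3^{m-n}$ forced by the $E$-$F$ relation --- is precisely the computation that deferral stands for, spelled out in more detail than the paper itself provides.
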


Note that if $X\in \Es$ has grading  $\deg X=(d_0,d_1,\dots,d_{N-1},d_\delta)$ then the grading of the image is given by $\deg^v(ev_u^\s(X))=(d_1-d_0,d_2-d_0,\dots,d_{N-1}-d_0;d_\delta)$.


\begin{thebibliography}{0000000}
		
	\bibitem[B]{B} J. Beck, {\it Braid group action and quantum affine algebras},
	{Comm. Math. Phys.} {\bf 165} (1994), no. 3,  555--568
		    
	\bibitem[BM]{BM} L. Bezerra and E. Mukhin,
	{\it 	Quantum Toroidal Algebra Associated with $\gl_{m|n}$}, Algebr Represent Theor {\bf 24} (2021), 541–564. https://doi.org/10.1007/s10468-020-09959-9. arXiv:1904.07297
	
	\bibitem[C]{C} I. Cherednik,
	{\it Double affine Hecke algebras, Knizhnik-Zamolodchikov equations, and Macdonald's operators}, {Int. Math. Res. Notices, Duke Math. J.}  {\bf 68} (1992),  171--180
		    
	\bibitem[DF]{DF} J. Ding and I. Frenkel, {\it Isomorphism of Two Realizations
		of Quantum Affine Algebra $U_q(\widehat{gl(n)})$}, 	{Comm. Math. Phys.} {\bf 156} (1993), no. 2,  277--300
		    
	\bibitem[DK]{DK} J. Ding and S. Khoroshkin, {\it Weyl group extension of quantized current algebras}, {Transformation Groups} {\bf 5} (2000), no. 1, 35--59
		    
		    
	\bibitem[FJMM]{FJMM} B. Feigin, M. Jimbo, T. Miwa, and E. Mukhin, 
	{\it Branching rules for quantum toroidal $\mathfrak{gl}_N$}, 
	Adv. Math. \textbf{300} (2016), 229--274
		    
	\bibitem[L]{L} G. Lusztig, {\it Affine Hecke algebras and their graded version},
	{Journal of the American Mathematical Society} {\bf 2} (1989), no. 3,  599--635
		
	\bibitem[M]{M} K. Miki, {\it Toroidal braid group action and an automorphism
		of toroidal algebra $U_q\bigl(\mathfrak{sl}_{n+1,tor}\bigr)$ ($n\ge2$)},
	{Lett. Math. Phys.} {\bf 47} (1999), no. 4,  365--378
		
	\bibitem[M2]{M2} K. Miki,  {\it Toroidal and level $0$ $U_q'\widehat{sl_{n+1}}$ actions on $U_q\widehat{gl_{n+1}}$
		modules}, J. Math. Phys. {\bf 40} (1999), no. 6, 3191--3210
		
	\bibitem[T]{T} A. Tsymbaliuk,
	{\it Shuffle algebra realizations of type A super Yangians and quantum affine superalgebras for all Cartan data},	Lett. Math. Phys. {\bf 110} (2020), 2083–2111. https://doi.org/10.1007/s11005-020-01287-9
	
	\bibitem[T2]{T2} A. Tsymbaliuk,
		{\it PBWD bases and shuffle algebra realizations for 
			$U_{\boldsymbol{v}}(L\mathfrak{sl}_n)$, $U_{\boldsymbol{v_1},\boldsymbol{v_2}}(L\mathfrak{sl}_n)$, $U_{\boldsymbol{v}}(L\mathfrak{sl}_{m|n})$}, arXiv:1808.09536
	
	\bibitem[Y]{Y} H. Yamane,
	{\it On defining relations of affine Lie superalgebras and affine quantized universal enveloping superalgebras}, 
	Publ. RIMS, Kyoto Univ. {\bf 35} (1999), 321-–390
			
			
\end{thebibliography}
\end{document}